\documentclass[a4paper,11pt]{amsart}
\usepackage{amsmath,amssymb,amsthm,mathtools}
\usepackage{enumitem}
\usepackage{hyperref}
\usepackage{mathrsfs}
\usepackage{tikz}
\usepackage{comment}
\usepackage{todonotes}
\usepackage{esint}

\hypersetup{
	colorlinks=true,
	linkcolor=blue,
	citecolor=blue,
	urlcolor=blue
}

\numberwithin{equation}{section}

\theoremstyle{definition}
\newtheorem{definition}{Definition}[section]
\newtheorem{remark}[definition]{Remark}

\theoremstyle{plain}
\newtheorem{theorem}[definition]{Theorem}
\newtheorem{proposition}[definition]{Proposition}
\newtheorem{corollary}[definition]{Corollary}
\newtheorem{lemma}[definition]{Lemma}

\newcommand{\R}{\mathbb{R}}

\newcommand{\Sph}{\mathbb{S}}
\newcommand{\dist}{\operatorname{dist}}

\newcommand{\kqh}{k_\Omega}

\newcommand{\CAT}{\mathrm{CAT}}
\newcommand{\inj}{\operatorname{injrad}}

\newcommand{\euc}{\mathrm{euc}}
\newcommand{\Hess}{\operatorname{Hess}}

\newcommand{\Cut}{\operatorname{Cut}}

\newcommand{\injrad}{\operatorname{injrad}}
\newcommand{\conjrad}{\operatorname{conjrad}}

\title{Quasihyperbolic domains are CAT(2)}
\author{Toni Ikonen}
\author{Abhishek Pandey}

\address{Department of Mathematics and Statistics\\ P. O. Box 68 (Pietari Kalmin katu 5) \\ FI-00014 University of Helsinki\\ Finland}
\email{toni.ikonen@helsinki.fi}

\address{
	Department of Mathematics and Statistics\\
	University of Jyväskylä\\
	P.O. Box 35\\
	FI-40014 University of Jyväskylä\\
	Finland
}
\email{pandey.a.pandey@jyu.fi}

\subjclass[2020]{Primary 53C23; Secondary 30C65, 51F99, 53C20.}
\keywords{Quasihyperbolic metric, $\CAT(\kappa)$ space, Alexandrov curvature, conformal metric, sectional curvature, injectivity radius, Gromov--Hausdorff convergence.}

\thanks{T. Ikonen was supported by the Swiss National Science Foundation grant 212867 and the Research Council of Finland, project number 374557. A. Pandey was supported by the Research Council of Finland, project number 360505.}

\begin{document}

\begin{abstract}
We prove that every proper subdomain of the Euclidean space equipped with its quasihyperbolic metric is CAT(2). The CAT property leads to a positive resolution of Väisälä's uniqueness, prolongation, quasihyperbolic convexity, and local geodesic conjectures on quasihyperbolic geodesics affirmatively on all dimensions, extending the two-dimensional results by Herron. The sectional curvature bound is optimal as demonstrated by the complement of two points in dimensions at least three.

We also show that in any quasihyperbolic domain, quasihyperbolic spheres up to radius $\pi/\sqrt{2}$ are $\mathcal{C}^{1,\frac{1}{2}}$-diffeomorphic to the Euclidean sphere. Consequently, we answer a question by Gehring and Vuorinen on the regularity of such spheres. Similar techniques lead to a positive resolution of Väisälä's conjecture on the Euclidean convexity of quasihyperbolic balls up to radius $\arctan(\sqrt{2})/\sqrt{2}$. 

We establish that in all dimensions, the quasihyperbolic metric is CAT(0) on convex domains. This leads to a positive answer to a question by Martio and Väisälä on the quasihyperbolic convexity of quasihyperbolic balls on such domains. 

Finally, we prove that a variable Alexandrov curvature lower bound for a quasihyperbolic domain self-improves to a global lower bound of $-1$ and is equivalent to the concavity of the domain.
\end{abstract}

\maketitle
	\pagestyle{myheadings}
	\markboth{Quasihyperbolic domains are CAT(2)}{Toni Ikonen and Abhishek Pandey}
	
\section{Introduction and main results}
\subsection{Background and main results}
In this manuscript, we consider the geometry of proper domains $\Omega \subsetneq \R^n$ equipped with the quasihyperbolic metric $k_\Omega$; the pair $(\Omega,k_\Omega)$ is a \emph{quasihyperbolic domain}. The quasihyperbolic geometry of the domain is sensitive to the structure of its boundary since $k_\Omega$ is constructed by minimizing the weighted Riemannian length
\begin{equation*}
    \ell_{k_\Omega}(\gamma)
    =\int_\gamma\frac{ds}{\delta_\Omega(x)},
\end{equation*}
over rectifiable curves, where $\delta_{\Omega} \colon \Omega \to (0,\infty)$ is the distance to the complement. That is,
$$\delta_\Omega(x)=\dist(x,\R^n \setminus \Omega)\coloneqq\inf_{p\in \R^n \setminus \Omega}|x-p|.$$ Quasihyperbolic domains were first introduced by Gehring and Palka \cite{Gehring-Palka-1976} for the purpose of analyzing quasiconformally homogeneous domains. Shortly after that, Gehring and Osgood classified uniform domains using the quasihyperbolic metric \cite{Gehring-Osgood-1979-uniform-domains-and-the-quasihyperbolic-metric}. Since then, these metrics have found numerous applications in Sobolev analysis and on uniformization problems \cite{Jones-1980-extension-theorems-for-BMO,Jones-1981-quasiconformal-mappings-and-extendability-of-functions-in-Sobolev-spaces,herron-koskela-1991-uniform-sobolev-extension-and-quasiconformal-circle-domains,Jones-Smirnov-2000-removability-theorems,Bonk-Heinonen-Koskela-2001-uniformizing-gromov-hyperbolic-spaces,Ntalampekos-Younsi-2020-rigidity-theorems-for-circle-domains,Karafyllia-Ntalampekos-2024}; see the survey \cite{Koskela-1998-old-and-new-on-the-quasihyperbolic-metric} and the monograph \cite{Bonk-Heinonen-Koskela-2001-uniformizing-gromov-hyperbolic-spaces} for further background. 

Since the metric space $(\Omega,k_\Omega)$ is a complete and locally compact length space, the metric Hopf--Rinow theorem implies the existence of length-minimizing curves joining any pair of points, called \emph{quasihyperbolic geodesics}. The large scale properties of $(\Omega,k_\Omega)$, and connections to Gromov hyperbolicity, have been studied in \cite{Bonk-Heinonen-Koskela-2001-uniformizing-gromov-hyperbolic-spaces,Balogh-Buckley-2003-geometric-characterizations-of-gromov-hyperbolicity,Buckley-2020-quasihyperbolic-geodesics-are-hyperbolic-quasi-geodesics}.

Regarding local properties, Väisälä proposed several conjectures in~\cite{Vaisala2009}.

\medskip

\textit{Uniqueness conjecture.}
There is a universal constant $c_U>0$ such that if $x,y\in \Omega$ and $\kqh(x,y)<c_U$,
then there is only one quasihyperbolic geodesic joining $x$ to $y$.

\medskip

\textit{Prolongation conjecture.}
There is a universal constant $c_P>0$ such that if $\gamma\colon x\curvearrowright y$
is a quasihyperbolic geodesic with $\ell_{\kqh}(\gamma)<c_P$, then there is a
quasihyperbolic geodesic $\gamma_1\colon x\curvearrowright y_1$ of length $c_P$ such that
$\gamma\subset \gamma_1$.

\medskip

\textit{Local geodesic conjecture.}
There is a universal constant $c_{LG}>0$ such that a local quasihyperbolic geodesic of quasihyperbolic length $\leq c_{LG}$ is a quasihyperbolic geodesic.

\medskip

\textit{Quasihyperbolic convexity conjecture.}
There is a universal constant $c_{QH}>0$ such that every quasihyperbolic ball $B_{k_\Omega}(p,r)$ is quasihyperbolically convex for every $r<c_{QH}$. That is, if $x, y \in B_{k_\Omega}(p,r)$, any quasihyperbolic geodesic joining $x$ to $y$ is contained in $B_{k_\Omega}(p,r)$.

\medskip

Väisälä \cite{Vaisala2009} resolved the first three conjectures positively in two dimensions. More recently, Herron \cite[Corollary D]{Herron-2020} proved sharp versions of all four conjectures in two dimensions. Herron achieved this by proving that any planar quasihyperbolic domain has its Alexandrov curvature bounded from above by $0$ and is $\mathrm{CAT}(1)$ \cite[Theorems A and C]{Herron-2020}; we recall that a proper length space $X$ is $\mathrm{CAT}(\kappa)$ for $\kappa > 0$ if its Alexandrov curvature is bounded from above by $\kappa$ \emph{and} geodesics with length $< D_\kappa \coloneqq \pi/\sqrt{\kappa}$ are unique; if $\kappa \leq 0$, the definition is otherwise the same but $D_\kappa = \infty$.

We extend Herron's result to higher dimensions as follows.

\begin{theorem}\label{thm:universal-cat2}
    For any domain $\Omega \subsetneq \R^n$ and $n \geq 3$, the quasihyperbolic domain $(\Omega,k_\Omega)$ is $\CAT(2)$.
\end{theorem}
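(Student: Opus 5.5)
The plan is to exhibit $k_\Omega$ as a limit of smooth conformal Riemannian metrics with sectional curvature at most $2$ and injectivity radius at least $\pi/\sqrt{2}$, and then pass the $\CAT(2)$ property to the limit.

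\textbf{Step 1: the model case $\Omega=\R^n\setminus F$ with $F$ finite.} On each Voronoi cell of a point $p\in F$, $\delta_\Omega(x)=|x-p|$ and the metric $|dx|^2/|x-p|^2$ is locally isometric to the cylinder $\R\times\Sph^{n-1}$. This cylinder has sectional curvatures in $[0,1]$. So the only curvature concentrates along the bisector walls, where $\delta_\Omega=\min_p|x-p|$. Since $\delta_\Omega$ is a minimum of smooth functions, $\log(1/\delta_\Omega)$ is a maximum of smooth functions, and the conformal factor $e^{2u}$ with $u=-\log\delta_\Omega$ is ``convex-kinked''. Positive jumps of this kind create nonpositive distributional curvature contributions, since they are concave in the length sense.

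Concretely, I would smooth the minimum as $\delta_\varepsilon=-\varepsilon\log\sum_p e^{-|x-p|/\varepsilon}$, or as a soft-min of the functions $|x-p|^{-1}$. I would then compute the sectional curvature of $g_\varepsilon=\delta_\varepsilon^{-2}|dx|^2$ with the conformal formula
\[
K(e_1,e_2)=-e^{-2u}\bigl(\Hess u(e_1,e_1)+\Hess u(e_2,e_2)-|\nabla u|^2+(e_1\!\cdot\!\nabla u)^2+(e_2\!\cdot\!\nabla u)^2\bigr).
\]
The aim is the bound $K\le 2$. With $|\nabla\delta|\le1$, the first-order terms contribute at most $1\cdot\delta^{-2}\cdot\delta^2$-type quantities, totalling at most $2$ once the two gradient-squared terms are counted. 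The Hessian terms must be shown to have a good sign, using that the soft-min of the functions $|x-p|$ has $\delta\,\Hess\delta \le$ the identity minus a gradient projection. This explains the constant $2$, and the complement of two points should show sharpness.

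\textbf{Step 2: injectivity radius.} I would show that $(\Omega,g_\varepsilon)$ has no conjugate points before length $\pi/\sqrt2$, which follows from Rauch comparison once $K\le 2$. I would also show there are no short geodesic loops, i.e.\ no closed geodesic of length $<2\pi/\sqrt2$. Loops can be controlled because along a geodesic $\delta$ changes by a factor of at most $e^{\ell}$, so a short loop lies in a Euclidean ball where $\delta$ is comparable to a constant. There it must wind nontrivially around points of $F$, and the winding costs length at least about $2\pi$ in the cylinder. Klingenberg's lemma then gives $\injrad\ge\pi/\sqrt2$, and a complete Riemannian manifold with $K\le\kappa$ and $\injrad\ge\pi/\sqrt\kappa$ is $\CAT(\kappa)$.

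\textbf{Step 3: approximation.} For a general closed $\R^n\setminus\Omega$, I would take finite sets $F_j$ (or finite $\varepsilon$-nets of the complement intersected with large balls) so that $\delta_{\R^n\setminus F_j}\to\delta_\Omega$ locally uniformly on $\Omega$. The quasihyperbolic metrics then converge locally uniformly, giving pointed Gromov--Hausdorff convergence of the length spaces. Because the injectivity radius bound is uniform, the $\CAT(2)$ condition, including the four-point condition and uniqueness of geodesics of length $<\pi/\sqrt2$, passes to the limit. Uniqueness is the delicate part and needs the uniform lower bound on the convexity radius, using the standard stability of $\CAT(\kappa)$ under GH limits with a uniform size bound.

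\textbf{Main obstacle.} I expect the main obstacle to be the curvature computation of Step 1 at the smoothed walls. One must check that the smoothing preserves $K\le2$ rather than producing large positive curvature. If the smoothing fails, I would instead argue directly in the piecewise setting, gluing cylinder pieces along the walls. Those walls are totally geodesic or concave from each side, since $u$ is a maximum, and I would invoke a gluing theorem for $\CAT(\kappa)$ pieces with a local upper curvature bound.
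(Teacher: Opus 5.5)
\textbf{There is a genuine gap in Step 2.} Your overall architecture matches the paper's: reduce to $\Omega=\R^n\setminus F$ with $F$ finite, smooth the density by a soft-max, prove $K\le 2$ and $\injrad\ge\pi/\sqrt2$, globalize, and pass to the limit. Step 3 is fine, since stability of $\CAT(\kappa)$ under pointed Gromov--Hausdorff convergence needs no extra uniformity.

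The problem is the loop-length bound. For $n\ge3$ the space $\R^n\setminus F$ is simply connected, so a closed geodesic has nothing to ``wind around''. The $e^{\ell}$-localization gives no lower bound on its length either, and ``at least about $2\pi$'' is precisely what has to be proved. The missing mechanism is the Fenchel--Borsuk theorem:
\begin{itemize}
\item If $g=e^{2u}g_{\euc}$ and a $g$-geodesic is parametrized by Euclidean arclength with unit tangent $T$, the geodesic equation gives $T'=\nabla u-\langle\nabla u,T\rangle T$. So its Euclidean curvature is at most $|\nabla u|$.
\item For the soft-max $e^{u}=(\sum_{p\in F}|x-p|^{-\beta})^{1/\beta}$ one checks $|\nabla u|\le e^{u}$, a smoothed form of $|\nabla\delta_\Omega|\le1$.
\item A smooth closed $g$-geodesic is a closed $C^2$ curve in $\R^n$, so its total curvature is at least $2\pi$. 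Hence $\ell_g=\int e^{u}\,ds\ge\int|T'|\,ds\ge2\pi>\sqrt2\,\pi$.
\end{itemize}

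\textbf{This bound needs a smooth closed geodesic, and your Step 2 conflates this with a geodesic loop.} Klingenberg's lemma at an arbitrary point only gives a geodesic loop of length $2\injrad(p)$. That loop may have a corner at $p$ with exterior angle up to $\pi$, and then Fenchel only gives length $\ge\pi$, i.e.\ $\injrad\ge\pi/2$. The paper instead applies Klingenberg at a point where $\injrad$ attains its global infimum, where the loop closes up smoothly. On the noncompact manifold $(\R^n\setminus F,g)$ this requires showing $\injrad>\pi-\varepsilon$ outside a compact set. The paper does this by rescaling near each puncture and near $\infty$: there the metric converges in $C^\infty_{\mathrm{loc}}$ to a (scaled) cylinder $\R\times\Sph^{n-1}$ of injectivity radius $\ge\pi$, and stability of the geodesic flow finishes the argument. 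Your proposal does not treat the ends at all.

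\textbf{Smaller slips in Step 1.}
\begin{itemize}
\item The gradient terms in your conformal formula have the wrong sign. The correct identity is $e^{2u}K=-\operatorname{tr}_\Pi\Hess u-|\pi_{\Pi^\perp}\nabla u|^2$; yours gives $K=+1$ on planes orthogonal to $e_n$ for $u=-\log x_n$ on $\{x_n>0\}$.
\item With the correct formula, the constant $2$ comes from the Hessians of the individual $-\log|x-p|$. The mixing (covariance) term and the gradient term are the nonpositive ones, so your bookkeeping is reversed.
\item The additive soft-min $-\varepsilon\log\sum_p e^{-|x-p|/\varepsilon}$ is negative at the punctures once $F$ has two points, so it does not define a metric on $\R^n\setminus F$. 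The multiplicative soft-max above is the right choice.
\end{itemize}
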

The following is an immediate consequence of $\mathrm{CAT}(2)$ theory, see Section~\ref{section-consequences-of-theorems} for details.
\begin{theorem}\label{theorem-resolution-of-conjectures}
    All four conjectures of Väisälä hold in all dimensions with constants independent of the dimension: we may take $c_U = c_P = c_{LG} = D_2$ and $c_{QH} = D_2/2$.
\end{theorem}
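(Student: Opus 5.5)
We deduce each conjecture from Theorem~\ref{thm:universal-cat2}, together with Herron's planar results for $n=2$ (CAT(1), hence CAT(2)), using only standard facts about $\CAT(\kappa)$ spaces; here $\kappa=2$ and $D_2=\pi/\sqrt{2}$. We also use that $(\Omega,\kqh)$ is a complete, locally compact length space, so it is proper and geodesic. The dimension-independence of the constants is then automatic, since $\kappa=2$ does not depend on $n$.

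\emph{Uniqueness.} By the definition of $\CAT(2)$ recalled above, any two points at distance $<D_2$ are joined by a unique geodesic. Hence $c_U=D_2$.

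\emph{Local geodesics.} In a $\CAT(\kappa)$ space, every local geodesic of length $<D_\kappa$ is a geodesic (Bridson--Haefliger II.1.4 and II.1.13). I will recall the argument. The geodesic between two points at distance $<D_\kappa$ varies continuously with its endpoints. Given a local geodesic $c$ of length $<D_\kappa$, run a continuity argument along the parameter $t$. The set of $t$ for which $c|_{[0,t]}$ is the unique geodesic is nonempty and closed. It is also open: this follows from comparison with $M_\kappa^2$, because a local geodesic that is geodesic on overlapping subsegments has the angle condition forcing concatenation (angle $\pi$ at the junction). The comparison-triangle argument gives angle $\pi$ and length additivity as long as the total length stays $<D_\kappa$. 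The statement is for length $\le c_{LG}$, so for length exactly $D_2$ I would approximate: every proper initial segment is a geodesic, and geodesicity passes to the limit by continuity of length and distance. This gives $c_{LG}=D_2$.

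\emph{Prolongation.} Let $\gamma\colon x\curvearrowright y$ have length $<D_2$. Because $\Omega$ is open in $\R^n$ and $\kqh$ is locally bi-Lipschitz to the Euclidean metric, a Euclidean segment through $y$ continues $\gamma$ locally. Better, I will use geodesic extension in the Riemannian sense. Away from the nonsmooth set, quasihyperbolic geodesics are $C^{1,1}$ (the density $1/\delta_\Omega$ is locally Lipschitz), so we can solve the geodesic ODE. A simpler route is the following. Consider the geodesics from $x$ to points $z_j$ with $\kqh(x,z_j)=D_2-1/j$, where $z_j$ is chosen as a limit point of prolongations obtained by the local geodesic property. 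I would prolong $\gamma$ beyond $y$ by an arbitrary local geodesic. The key input is the existence of a local extension: short geodesics through $y$ in any direction exist, and one matching the direction of $\gamma$ at $y$ forms angle $\pi$. Concatenate to length $D_2$, which is possible since $(\Omega,\kqh)$ is complete and without boundary. By the local geodesic step the result is a geodesic. This local extendability, namely that a geodesic arriving at $y$ can be continued so that the concatenation is a local geodesic, is the main obstacle. I would obtain it from the $C^1$ regularity of quasihyperbolic geodesics (Martin's theorem) together with the Arzel\`a--Ascoli compactness of geodesics from $y$ of small length in directions approximating the incoming tangent. This gives $c_P=D_2$.

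\emph{Convexity.} In a $\CAT(\kappa)$ space, balls of radius $<D_\kappa/2$ are convex (Bridson--Haefliger II.1.4(3)). For $x,y\in B_{\kqh}(p,r)$ with $r<D_2/2$, we have $\kqh(x,y)<D_2$. The distance function to $p$ along the unique geodesic $[x,y]$ is then dominated, via comparison, by the corresponding function in the spherical model, and that function is convex in this range. Hence $\kqh(p,\cdot)<r$ on $[x,y]$, and by uniqueness every quasihyperbolic geodesic joining $x$ to $y$ is this one. This gives $c_{QH}=D_2/2$.
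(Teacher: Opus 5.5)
Your arguments for uniqueness, for local geodesics, and for convexity of balls are fine. They match the paper, which obtains these from items (1), (2) and (3) of Bridson--Haefliger, Proposition II.1.4, applied to the $\CAT(2)$ space $(\Omega,k_\Omega)$.

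\textbf{The gap is in the prolongation step.} It sits exactly where you say the ``main obstacle'' is. You need a geodesic $\sigma$ of some definite positive length leaving $y$ whose initial Euclidean tangent continues the terminal tangent of $\gamma$, and none of the tools you list produces one.
\begin{enumerate}
\item A Euclidean segment through $y$ does not continue $\gamma$ as a local geodesic.
\item The geodesic ODE is not available. The density $1/\delta_\Omega$ is only locally Lipschitz, so the geodesic equation has merely bounded measurable coefficients. There is no well-posed initial value problem, and no reason for solutions to be minimizing. ``Away from the nonsmooth set'' is also vacuous, since the non-differentiability set of $\delta_\Omega$ can be dense (Remark~\ref{remark-no-curvature-lower-bounds-or-smooth-structure}).
\item Martin's $\mathcal{C}^{1,1}$ regularity plus Arzel\`a--Ascoli only yields limits of geodesics you already have. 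To get a limit geodesic of length $\geq\varepsilon$ with initial direction $u$, you need geodesics of length $\geq\varepsilon$ from $y$ whose initial directions tend to $u$. That is the same existence problem. Solving it would amount, for instance, to proving that the initial-direction map $\partial B_{k_\Omega}(y,\varepsilon)\to\Sph^{n-1}$ is onto, which is a degree argument you do not give.
\item Choosing the $z_j$ as ``limit points of prolongations'' is circular.
\end{enumerate}
Once such a $\sigma$ exists, your angle-$\pi$ argument (Alexandrov angle equals Euclidean angle, then the $\CAT(2)$ comparison forces a degenerate triangle) would indeed finish the step.

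\textbf{The missing idea is topological, and this is how the paper argues.} $(\Omega,k_\Omega)$ is a $\CAT(2)$ space homeomorphic to a manifold without boundary. It therefore has the geodesic extension property, by Bridson--Haefliger II.5.12 together with Lemma II.5.8(1). The mechanism is as follows.
\begin{enumerate}
\item Suppose a short geodesic $[x,y]$ could not be prolonged past $y$.
\item Then, by uniqueness of short geodesics, no geodesic from $x$ to a point $z\neq y$ passes through $y$.
\item So the geodesic contraction of $B_{k_\Omega}(x,D_2)$ onto $x$ restricts to a contraction of $B_{k_\Omega}(x,D_2)\setminus\{y\}$.
\item Both sets are then contractible, and the long exact sequence plus excision give $H_n(\Omega,\Omega\setminus\{y\})=0$. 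This contradicts $\Omega$ being an $n$-manifold.
\end{enumerate}
The extended local geodesic, cut at length $D_2$, is a geodesic by II.1.4(2). This is what gives $c_P=D_2$.
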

Theorem~\ref{thm:universal-cat2} is sharp.
\begin{theorem}\label{thm:sharp-cat2}
    If $n\geq3$, the quasihyperbolic domain $( \Omega, k_\Omega )$ for $\Omega=\R^n\setminus\{-e_1,e_1\}$ is not (locally) $\mathrm{CAT}(\kappa)$ for any $\kappa < 2$.
\end{theorem}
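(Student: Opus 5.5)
The plan is to find, inside $(\Omega,k_\Omega)$ with $\Omega=\R^n\setminus\{-e_1,e_1\}$, a smooth Riemannian piece that is convex in $k_\Omega$ and has a tangent plane of sectional curvature exactly $2$. Local $\CAT(\kappa)$ with $\kappa<2$ would force this piece to have curvature $\le\kappa$, a contradiction. The candidate is the bisecting hyperplane $H=\{x_1=0\}$.

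First, the metric. On $\Omega$ we have $\delta_\Omega(x)=\min\{|x-e_1|,|x+e_1|\}$. On each open half-space this is the conformal metric $|dx|/|x\mp e_1|$, which is locally isometric to the flat cylinder $\R\times\Sph^{n-1}$, so its sectional curvatures lie in $[0,1]$. The curvature $2$ therefore cannot come from the smooth parts. It comes from the crease $H$, which I treat intrinsically.

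For $y\in H\cong\R^{n-1}$ we have $\delta_\Omega(y)=\sqrt{1+|y|^2}$. Hence the length structure induced on $H$ is the conformal metric $e^{2u}|dy|^2$ with $u(y)=-\tfrac12\log(1+|y|^2)$. This is smooth Riemannian on $\R^{n-1}$, and $n-1\ge 2$ is exactly where the hypothesis $n\geq 3$ enters. For a conformal metric $e^{2u}g_{\euc}$ at a point where $\nabla u=0$, the sectional curvature of the coordinate plane spanned by $e_i,e_j$ is $-e^{-2u}(u_{ii}+u_{jj})$. At $y=0$ we have $u=0$, $\nabla u=0$ and $\Hess u(0)=-\mathrm{Id}$, so every plane at the origin of $H$ has sectional curvature $2$. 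By continuity, for any $\kappa<2$ there is a neighbourhood of $0$ in $H$ on which every sectional curvature exceeds $\kappa$.

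Next, I show that $H$ is locally convex in $(\Omega,k_\Omega)$, assuming for contradiction that $(\Omega,k_\Omega)$ is $\CAT(\kappa)$ on some neighbourhood $U$ of $0$ with $\kappa<2$. The reflection $R(x_1,\dots,x_n)=(-x_1,x_2,\dots,x_n)$ preserves $\Omega$ and $\delta_\Omega$, so it is a $k_\Omega$-isometry with fixed set $H$. In a small ball around $0$, geodesics between any two points are unique by the local $\CAT(\kappa)$ property. A geodesic joining two points of $H$ is mapped by $R$ to another geodesic with the same endpoints, so by uniqueness it is $R$-invariant and hence lies pointwise in $H$. Consequently, near $0$ the restriction of $k_\Omega$ to $H$ coincides with the intrinsic Riemannian distance of $e^{2u}|dy|^2$, and $H$ is a convex subset.

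A convex subset of a $\CAT(\kappa)$ space is $\CAT(\kappa)$, because comparison triangles are checked with the same geodesics. So a neighbourhood of $0$ in the Riemannian manifold $(\R^{n-1},e^{2u}|dy|^2)$ would be locally $\CAT(\kappa)$. A smooth Riemannian manifold is locally $\CAT(\kappa)$ only if all its sectional curvatures are $\le\kappa$, which follows from the standard comparison of small geodesic triangles or the second variation formula. This contradicts the curvature $2>\kappa$ at $0$.

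The main point to get right is the convexity step: one must make sure the balls used are small enough to be uniquely geodesic and contained in $U$. The rest, namely the curvature computation and the passage to convex subsets, is routine.
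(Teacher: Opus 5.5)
Your proposal is correct and follows essentially the same route as the paper. Reflection symmetry plus uniqueness of short geodesics makes the hyperplane $\{x_1=0\}$ convex in $(\Omega,k_\Omega)$, and its induced Riemannian metric $(1+|y|^2)^{-1}|dy|^2$ has sectional curvature $2$ at the origin, which rules out local $\CAT(\kappa)$ for $\kappa<2$. The differences are minor: you take uniqueness from the hypothetical local $\CAT(\kappa)$ neighbourhood rather than from Theorem~\ref{thm:universal-cat2}, and you compute the curvature at $0$ with the conformal formula instead of the radial-metric formula.
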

\begin{remark}\label{remark-no-curvature-lower-bounds-or-smooth-structure}
    We highlight that Theorem~\ref{thm:universal-cat2} truly concerns non-smooth Riemannian and metric geometry through two main points.
    
    First, the quasihyperbolic domain $(\Omega,k_\Omega)$ from Theorem~\ref{thm:sharp-cat2} shows that the sectional curvature can concentrate on a lower-dimensional set. Indeed, on the one hand, if $Q = \{ x_1 = 0 \}$, restricting $k_\Omega$ to $\Omega \setminus Q$ defines a distance locally isometric to $\R \times \Sph^{n-1}$ and thus $( \Omega \setminus Q, k_\Omega )$ is locally $\mathrm{CAT}(1)$. On the other hand, $( \Omega, k_\Omega )$ is not even locally $\CAT(\kappa)$ for $\kappa <2$ by Theorem~\ref{thm:sharp-cat2}. In this sense, the sectional curvature concentrates to $Q$; see Section~\ref{section-sharpness-of-the-curvature-bound} for further discussion on the example.

    Second, we provide two examples showing the non-smoothness of the quasihyperbolic metric. In particular, we illustrate the taxonomy of sets on which $\delta_{\Omega}$ is non-differentiable. In case $M$ is a compact smooth $n$-manifold, by the Whitney embedding theorem, $M$ embeds smoothly in $\R^{2n+1}$. Now, a small enough tubular neighbourhood $\Omega \subset \R^{2n+1}$ of $M$ is a disk bundle over $M$ for which the set of non-differentiability points of $\delta_\Omega$ coincides with $M$. On the other extreme, by a result due to Zamfirescu \cite{zamfirescu-1990-the-nearest-point-mapping-is-single-valued-nearly-everywhere}, the non-differentiability points of $\delta_\Omega$ is dense in $\Omega$ for many domains.
\end{remark}
\begin{remark}\label{remark-gcba-space-examples}
    In light of Theorem~\ref{thm:universal-cat2} and Remark~\ref{remark-no-curvature-lower-bounds-or-smooth-structure}, quasihyperbolic domains and their covering spaces provide new examples of nonsmooth $\mathrm{CAT}(\kappa)$ spaces homeomorphic to manifolds, and, in particular, GCBA spaces in the sense of Lytchak and Nagano \cite{lytchak-nagano-2019-gcba-spaces}.
    
    In contrast to general GCBA topological manifolds (see \cite{lytchak-nagano-2022-topological-regularity-of-spaces-with-an-upper-curvature-bound}), a simple argument using the definition of $k_\Omega$ shows that the space of directions at every point of a quasihyperbolic domain is isometric to the sphere $\Sph^{n-1}$. Thus every point of a quasihyperbolic domain is regular in the sense of \cite{lytchak-nagano-2019-gcba-spaces}. See Remark~\ref{remark-local-dc-regularity} for further discussion.
\end{remark}

While Theorem~\ref{thm:universal-cat2} shows that an Alexandrov curvature upper bound is universal on quasihyperbolic domains, the following theorem demonstrates that a lower bound is rather rigid.
\begin{theorem}\label{theorem-cbb-equivalent-to-concavity}
    For a domain $\Omega \subsetneq \R^n$ and $n \geq 2$, the following are equivalent.
    \begin{enumerate}
        \item $\Omega$ is concave.
        \item $\delta_{\Omega}$ is differentiable at every point of $\Omega$.
        \item $(\Omega, k_\Omega)$ satisfies a local variable Alexandrov curvature lower bound.
        \item $(\Omega,k_\Omega)$ is $\mathrm{CBB}(-1)$.
    \end{enumerate}
    Under these assumptions, the differential of $\delta_{\Omega}$ is locally Lipschitz in $\Omega$ and $\delta_{\Omega}$ is convex.
\end{theorem}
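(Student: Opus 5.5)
The natural reading of ``concave'' here is that $\R^n \setminus \Omega$ is a closed convex set; the proof below uses this reading. I would prove the cycle $(1)\Rightarrow(2)\Rightarrow(1)$ using classical convex analysis, then $(1)\Rightarrow(4)$ by a curvature computation and smoothing, note that $(4)\Rightarrow(3)$ is trivial, and finish with $(3)\Rightarrow(2)$. The final statements, that $d\delta_\Omega$ is locally Lipschitz and $\delta_\Omega$ is convex, come out of the proof of $(1)\Rightarrow(2)$.

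\emph{Step 1: $(1)\Leftrightarrow(2)$.} For $x\in\Omega$, $\delta_\Omega$ is differentiable at $x$ if and only if $x$ has a unique nearest point in $\R^n\setminus\Omega$, and then $\nabla\delta_\Omega(x)=(x-\pi(x))/|x-\pi(x)|$. Points of the complement trivially have a unique nearest point, so (2) says that $\R^n\setminus\Omega$ is a Chebyshev set. By Motzkin's theorem, a closed Chebyshev set in $\R^n$ is convex; this gives $(2)\Rightarrow(1)$. Conversely, if $C=\R^n\setminus\Omega$ is convex, then:
\begin{itemize}
\item the nearest-point map $\pi$ is $1$-Lipschitz;
\item $\nabla\delta_\Omega=(x-\pi(x))/\delta_\Omega(x)$ is therefore locally Lipschitz on $\Omega$;
\item $\delta_\Omega=\dist(\cdot,C)$ is convex.
\end{itemize}
This proves $(1)\Rightarrow(2)$ together with the final regularity claims.

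\emph{Step 2: $(1)\Rightarrow(4)$.} For a conformal metric $e^{2u}|dx|^2$ with $u=-\log\delta$ and $\delta$ smooth, the standard formula for the sectional curvature of a plane spanned by Euclidean-orthonormal $X,Y$ simplifies (the first-order terms cancel) to
\[
K(X,Y)=\delta\bigl(\Hess\delta(X,X)+\Hess\delta(Y,Y)\bigr)-|\nabla\delta|^2 .
\]
Now mollify: $\delta_\varepsilon=\delta_\Omega*\rho_\varepsilon$ is smooth, convex and satisfies $|\nabla\delta_\varepsilon|\le1$. Hence the metrics $\delta_\varepsilon^{-2}|dx|^2$ have sectional curvature $\ge-1$ on compact subsets of $\Omega$. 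These metrics converge locally uniformly to $k_\Omega$, and the four-point (quadruple) comparison condition is stable under such convergence. So $(\Omega,k_\Omega)$ is locally $\mathrm{CBB}(-1)$. Since it is a complete length space, the globalization theorem upgrades this to global $\mathrm{CBB}(-1)$. The one technical point is controlling geodesics of the approximating metrics inside a fixed compact set, so that the comparison is really local; this should be routine via the comparability $\delta_\varepsilon\approx\delta_\Omega$ on compacts.

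\emph{Step 3: $(3)\Rightarrow(2)$, the main obstacle.} Suppose $\delta_\Omega$ is not differentiable at some $x_0$, so there are two nearest points $p\neq q$. Near $x_0$, $\delta_\Omega\le\min(|\cdot-p|,|\cdot-q|)$ with equality at $x_0$. Consequently $u=-\log\delta_\Omega$ lies above a function with a genuine convex ``crease'' along the bisecting hyperplane through $x_0$. This is exactly the kind of concentrated negative curvature that a curvature lower bound forbids.

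My first attempt would use the fact that in a space with a local lower curvature bound, distance functions are semiconcave along geodesics. Concretely:
\begin{enumerate}
\item Take quasihyperbolic geodesics from points $z_j$ approaching $p$.
\item Extract a Busemann-type limit $b_p(y)=\lim_j\bigl(k_\Omega(y,z_j)-k_\Omega(x_0,z_j)\bigr)$.
\item Combine $b_p$ with the analogous limit $b_q$ built from $q$.
\item Show that along a Euclidean segment (or a short quasihyperbolic geodesic) crossing the crease transversally at $x_0$, the combination exhibits a convex kink with two distinct one-sided derivatives. This contradicts semiconcavity.
\end{enumerate}
If this route proves too delicate, the fallback is a direct blow-up comparison: compare a thin quasihyperbolic triangle straddling the crease near $x_0$ with the model domain $\R^n\setminus\{p,q\}$, where Theorem~\ref{thm:sharp-cat2} and its computation show curvature concentrating like $-\infty$ relative to any lower bound. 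Then use the variable lower bound to derive a contradiction with the angle comparison at $x_0$, using that the space of directions is $\Sph^{n-1}$ (Remark~\ref{remark-gcba-space-examples}). Of the two routes, I would attempt the Busemann/semiconcavity argument first.
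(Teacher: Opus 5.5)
Your Steps~1 and~2 are fine. In Step~2, mollifying the convex function $\delta_\Omega$ preserves convexity, $|\nabla\delta_\varepsilon|\le 1$ and $\delta_\varepsilon\ge\delta_\Omega$. Since $\delta_\Omega\in\mathcal{C}^{1,1}_{\text{loc}}$, the approximants also have locally uniformly bounded curvature, so the localization you worry about is routine. This is a legitimate, more elementary substitute for the paper's distributional identity plus the approximation theorem of Eros--Kunzinger--Obanyan--Vardabasso.

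The gap is Step~3. It is a list of hopes rather than an argument, and its main route cannot work. Since $b_p,b_q$ are semiconcave, so is any minimum or nonnegative combination of them. Everything therefore rests on item~4: deriving a convex kink at $x_0$ from the crease of $u=-\log\delta_\Omega$. But the crease is invisible to first order in $(\Omega,k_\Omega)$. Spaces of directions are round spheres and Alexandrov angles equal Euclidean ones (Proposition~\ref{proposition-coincidence-of-two-angles}), so first variation makes distance functions differentiable at $x_0$ (cf.\ Theorem~\ref{theorem-qh-distance-c11half}).

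Concretely, take the unit ball and $x_0=0$. Every boundary point is a nearest point, so your argument would have to produce a kink there. But the space is $\CAT(0)$ (Theorem~\ref{thm:convex-cat0}), diameters are geodesics, and each $b_p$ is convex along them. First variation gives $b_p(sv)\le -s\langle v,p\rangle+o(s)$ for $s$ of both signs, and convexity then forces equality up to $o(s)$. So every $b_p$ is differentiable at $0$, and no combination has a kink. The failure of the lower bound there is second order: the sectional curvature of $\delta_\Omega^{-2}g_{\euc}$ behaves like $-1/|x|$, and geodesics do not even branch (Remark~\ref{remark-non-branching-geodesics-is-not-equivalent-to-differentiability}).

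Your fallback is equally unsupported. Blow-ups of $(\Omega,k_\Omega)$ at $x_0$ are Euclidean, so rescaling erases the crease. The computation behind Theorem~\ref{thm:sharp-cat2} exhibits curvature $+2$ on the bisecting hyperplane, not concentrated negative curvature.

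The missing idea is to turn the curvature bound into regularity of the metric tensor, rather than into a first-order contradiction. The paper pairs the local lower bound from (3) with the universal upper bound of Theorem~\ref{thm:universal-cat2}, which your Step~3 never uses, to get two-sided bounds on a small ball. Berestovski\u{\i}--Nikolaev then provide harmonic coordinates in which the metric is $\mathcal{C}^{1,\alpha}$. In these coordinates the identity chart becomes a bi-Lipschitz map whose differential is a.e.\ angle-preserving. Iwaniec's regularity theory makes it $\mathcal{C}^{2,\alpha}$, which forces $\delta_\Omega^{-2}g_{\euc}$ to be $\mathcal{C}^{1,\alpha}$ and hence $\delta_\Omega$ to be differentiable.
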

Recall that a domain $\Omega$ is concave if $\R^n \setminus \Omega$ is convex. For (3), a local variable bound means that each point is contained in an open neighbourhood satisfying an Alexandrov curvature lower bound for a constant, possibly depending on the neighbourhood. In contrast, $\mathrm{CBB}(-1)$ refers to a uniform lower bound of $-1$ \cite{AlexanderKapovitchPetrunin}.

While Theorem~\ref{theorem-cbb-equivalent-to-concavity} classifies the quasihyperbolic geometry of concave domains, convex domains have received more attention. In fact, Martio and Väisälä \cite{Vaisala-2005-quasihyperbolic-geodesics-in-convex-domains,Martio-Vaisala-2011} proved that in a convex domain, quasihyperbolic geodesics are unique and extend to a globally minimizing geodesic. These results also follow from our next result.
\begin{theorem}\label{thm:convex-cat0}
    If $\Omega\subsetneq\R^n$ is convex, then $(\Omega,k_\Omega)$ is $\CAT(0)$.
\end{theorem}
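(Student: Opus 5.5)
For convex $\Omega$, the plan is to show that $(\Omega,k_\Omega)$ is locally $\CAT(0)$ and simply connected, and then invoke the Cartan--Hadamard theorem for complete length spaces. Since $(\Omega,k_\Omega)$ is a complete, locally compact length space and $\Omega$ is convex, hence contractible, the metric Cartan--Hadamard theorem (Alexander--Bishop) reduces the statement to the local claim: every point has a neighbourhood that is $\CAT(0)$.

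For the local claim, write $\Omega$ as an intersection of open half-spaces $H_\alpha$. Then
\begin{equation*}
\delta_\Omega = \inf_\alpha \delta_{H_\alpha},
\end{equation*}
where each $\delta_{H_\alpha}$ is affine and positive on $\Omega$, and $\delta_\Omega$ is concave. The metric $k_\Omega$ is the conformal metric $\delta_\Omega^{-2}|dx|^2$, with conformal factor $e^{2u}$ for $u=-\log\delta_\Omega$. For smooth $u$, the sectional curvature of a plane spanned by orthonormal (Euclidean) vectors $X,Y$ is
\begin{equation*}
K = -e^{-2u}\Bigl(\Hess u(X,X)+\Hess u(Y,Y) - |\nabla u|^2 + (Xu)^2+(Yu)^2\Bigr).
\end{equation*}
For $u=-\log f$ with $f$ concave and positive, $\Hess u = -\Hess f/f + \nabla f\otimes\nabla f/f^2$. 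Hence
\begin{equation*}
K = f\bigl(\Hess f(X,X)+\Hess f(Y,Y)\bigr) - |\nabla f|^2 \le 0,
\end{equation*}
because $\Hess f\le 0$. In particular, each half-space $H_\alpha$ gives an exact copy of hyperbolic space with $K=-1$.

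The main obstacle is that $\delta_\Omega$ is only concave, not smooth, so this computation must be transferred to the nonsmooth setting. I would approximate $\delta_\Omega$ locally uniformly, in the Lipschitz sense, by smooth positive concave functions $f_j$ with $f_j\to\delta_\Omega$ and $\nabla f_j$ bounded. Two constructions are available. One is to mollify $\delta_\Omega$, which preserves concavity. The other is a soft-min of finitely many affine $\delta_{H_\alpha}$, obtained by exhausting $\Omega$ by convex polytopes $\Omega_j\uparrow\Omega$, where a suitably convexity-preserving smoothing keeps concavity. Each metric $f_j^{-2}|dx|^2$ is then a smooth Riemannian metric of nonpositive sectional curvature. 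On a small convex Euclidean ball $B$, whose closure is compact in $\Omega$, the conformal factors converge uniformly. The corresponding length metrics then converge uniformly on a smaller ball, and hence in the Gromov--Hausdorff sense.

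The key technical point is to obtain uniformly sized $\CAT(0)$ neighbourhoods for the approximants. Nonpositive curvature gives no conjugate points, so I need a uniform lower bound on the injectivity radius of small balls. I would obtain it by applying Cartan--Hadamard to each $f_j^{-2}|dx|^2$ on the whole convex domain where it is defined and complete. Using the exhaustion $\Omega_j$ with $f_j$ vanishing on $\partial\Omega_j$ and behaving like distance near the boundary, each $(\Omega_j, f_j^{-2}|dx|^2)$ is complete, simply connected and of nonpositive curvature, and is therefore globally $\CAT(0)$. This gives $\CAT(0)$ globally for the approximants and sidesteps the injectivity-radius issue entirely. The remaining step is to pass to the limit. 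Since $\CAT(0)$ four-point conditions are closed under pointed Gromov--Hausdorff convergence of geodesic spaces, and the lengths converge locally uniformly on compact subsets of $\Omega$, the limit $(\Omega,k_\Omega)$ satisfies the $\CAT(0)$ four-point condition on compact sets. It is therefore $\CAT(0)$, and in fact globally so without needing Cartan--Hadamard at the end. Checking that the $\Omega_j$-metrics converge to $k_\Omega$ locally uniformly requires geodesics between points of a compact set to stay in a fixed compact set. This follows from the uniform comparability $f_j\approx\delta_\Omega$ on compacts together with the standard bound $k_\Omega(x,y)\ge\log(\delta_\Omega(x)/\delta_\Omega(y))$.
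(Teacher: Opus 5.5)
Your final argument is correct, but it takes a genuinely different route from the paper. The paper does no smoothing. It shows that convexity of $\Omega$ is equivalent to convexity of $-\log\delta_\Omega$ (Proposition~\ref{prop:convex-equivalence}). It then applies the Lytchak--Stadler conformal deformation theorem directly to the bounded convex domains $\Omega_j=\Omega\cap B(p,j)$, where $-\log\delta_{\Omega_j}$ is continuous, convex and bounded below, to get that $(\Omega_j,k_{\Omega_j})$ is $\CAT(0)$. Finally it notes that $\delta_{\Omega_j}=\delta_\Omega$ on $\overline B_{k_\Omega}(p,3R)$ for large $j$, so $k_{\Omega_j}=k_\Omega$ exactly on $\overline B_{k_\Omega}(p,R)$, and Theorem~\ref{thm:CAT-stability} applies.

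You replace the nonsmooth Lytchak--Stadler theorem by classical Riemannian geometry: smooth concave approximants, the identity $K=f\operatorname{tr}_\Pi\Hess f-|\nabla f|^2\le 0$ (the same identity as in Lemma~\ref{lem:smooth-distance-curvature}), the Riemannian Cartan--Hadamard theorem on a complete convex domain, and stability. This is essentially the strategy the paper uses for Theorem~\ref{thm:universal-cat2}. Your route is more elementary and self-contained. The price is that the approximants must be built with care and the pointed Gromov--Hausdorff convergence must be proved by hand, whereas in the paper it is an exact isometry on balls.

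A few details need fixing.

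First, your displayed formula in terms of $u$ has the gradient terms with the wrong sign. The correct version is $e^{2u}K=-\Hess u(X,X)-\Hess u(Y,Y)+(Xu)^2+(Yu)^2-|\nabla u|^2$. This is what actually produces your (correct) formula in $f$, and it shows that convexity of $-\log f$ already suffices.

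Second, as stated it is not clear that $f_j$ vanishes on $\partial\Omega_j$. A mollification of $\delta_\Omega$ only lives on $\{\delta_\Omega>\varepsilon\}$, and the log-sum-exp soft-min is negative on the polytope boundary. Instead, set $\Omega_j\coloneqq\{f_j>0\}$ with $f_j=\delta_\Omega*\phi_{\varepsilon_j}-2\varepsilon_j$. Alternatively, use the power mean $f_j=(\sum_i\ell_i^{-\beta_j})^{-1/\beta_j}$ of the normalized affine functions defining a polytope. Both are concave, $1$-Lipschitz and vanish on $\partial\Omega_j$, so $f_j\le\delta_{\Omega_j}$ and completeness follows.

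Third, comparability $f_j\approx\delta_\Omega$ on compact sets does not by itself confine $d_j$-balls to a fixed compact set. Use instead the global bound $f_j\le\delta_\Omega$ on $\Omega_j$, which comes from Jensen for the mollifier and from $\Omega_j\subset\Omega$ for the polytope. It gives $d_j\ge k_\Omega$, hence $B_{d_j}(o,R)\subset B_{k_\Omega}(o,R)$. The reverse estimate comes from integrating $1/f_j$ along $k_\Omega$-geodesics, which stay in a compact set.
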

When $n = 2$, Theorem~\ref{thm:convex-cat0} is a consequence of the Cartan--Hadamard theorem and \cite[Theorem A]{Herron-2020} (or \cite[Corollary B]{Herron-2020}). As far as the authors are aware, Theorem~\ref{thm:convex-cat0} was previously unknown in higher dimensions. We note that the convexity of the domain is sufficient but not necessary for the $\CAT(0)$ conclusion; see Section~\ref{example:nonconvex-cat0-example} for a concave example. Martio and Väisälä asked in \cite[Question 2.14]{Martio-Vaisala-2011} whether quasihyperbolic balls are quasihyperbolically convex in convex domains. Theorem \ref{thm:convex-cat0} implies a positive answer to this question by standard $\CAT(0)$ theory; see~Remark~\ref{remark-vaisalas-conjectures}.

Our final two results concern the regularity of quasihyperbolic balls. The question on the regularity of quasihyperbolic balls goes back to discussions by Gehring and Vuorinen from the 1970s and progress has been limited until recently when Klén, Rasila and Talponen proved in \cite{KlenRasilaTalponen2017} that quasihyperbolic balls in convex domains of various Banach spaces, including Hilbert spaces, are $\mathcal{C}^1$-regular; for convex domains in Hilbert spaces, the $\mathcal{C}^1$-regularity also follows from \cite[Theorem~5.13]{vaisala-2007-quasihyperbolic-geometry-of-domains-in-hilbert-spaces} and \cite[Theorem~2.13]{Martio-Vaisala-2011}. Building upon Theorem~\ref{thm:universal-cat2}, we address the general case in Euclidean domains as follows.
\begin{theorem}\label{theorem-c1-regular-spheres}
     For any quasihyperbolic domain $( \Omega, k_\Omega )$ and $p \in \Omega$, there exists a $\mathcal{C}^{1,\frac{1}{2}}_{\text{loc}}$-regular diffeomorphism $$\Psi = ( \Psi_1, \Psi_2 ) \colon B_{k_\Omega}(p,D_2) \setminus \{p\} \to ( 0, D_2 ) \times \Sph^{n-1}$$ such that $\Psi_1(x) = k_{\Omega}(x,p)$ for $x \in B_{k_\Omega}(p,D_2) \setminus \{p\}$. Moreover, if $( \Omega, k_\Omega )$ is $\mathrm{CAT}(\kappa)$, the conclusion holds with $D_2$ replaced by $D_\kappa$.
\end{theorem}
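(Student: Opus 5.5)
The plan is to build $\Psi$ from the geodesic "polar coordinates" supplied by Theorem~\ref{thm:universal-cat2}, and then use the conformal structure of $k_\Omega$ to upgrade the topological statement to $\mathcal{C}^{1,\frac12}_{\text{loc}}$ regularity.

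\emph{Step 1: the map and its inverse.} Since $(\Omega,k_\Omega)$ is $\CAT(2)$, every $x$ with $k_\Omega(x,p)<D_2$ is joined to $p$ by a unique geodesic, and this geodesic depends continuously on $x$. Let $\Psi_2(x)\in\Sph^{n-1}$ be the initial direction of that geodesic at $p$. This is well defined because, as noted in Remark~\ref{remark-gcba-space-examples}, the space of directions at $p$ is isometric to $\Sph^{n-1}$. Concretely, $\Psi_2(x)$ is the Euclidean unit tangent vector at $p$, which exists because quasihyperbolic geodesics are $\mathcal{C}^{1}$ (indeed $\mathcal{C}^{1,1}$ in the Euclidean sense, by known results of Gehring--Osgood and Martio). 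Put $\Psi_1(x)=k_\Omega(x,p)$.

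For the inverse, I would show that every direction $v\in\Sph^{n-1}$ is realised by a geodesic leaving $p$ that extends to length $D_2$. The tool is geodesic prolongation in $\CAT(2)$ spaces that are homeomorphic to manifolds: local geodesics of length $<D_2$ are geodesics, and the space is locally geodesically complete (GCBA) because $\Omega$ is a manifold. This gives a map $(t,v)\mapsto \exp_p(tv)$. Injectivity follows from uniqueness of geodesics below $D_2$. Continuity of both $\Psi$ and its inverse follows from the $\CAT$ comparison: angles at $p$ control distances between points on geodesics, and conversely. At this stage $\Psi$ is a homeomorphism.

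\emph{Step 2: regularity of $\Psi_1$.} Write $d_p=k_\Omega(\cdot,p)$. By the $\CAT(2)$ comparison, $\sin(\sqrt2 d_p)$ is semiconcave in a suitable sense, so $d_p$ is semiconcave away from $p$. By the first variation formula together with uniqueness of geodesics, $d_p$ has a unique "gradient" everywhere. Translating this into Euclidean terms, $\nabla d_p(x)=-\dot\gamma_x(\text{end})/\delta_\Omega(x)$, where $\gamma_x$ is the geodesic from $p$ to $x$ with unit-speed quasihyperbolic parametrisation. So $d_p$ is differentiable with a continuous derivative.

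To get Hölder-$\frac12$ continuity of that derivative, I would estimate the angle between the endpoint tangents of geodesics from $p$ to nearby points $x,y$. The $\CAT(2)$ comparison triangle bounds the angle at $p$. For the angle at the endpoints, I would combine the lower semiconcavity bound with the upper bound coming from the Lipschitz nature of $\delta_\Omega$: a function that is semiconcave and also semiconvex-like only up to a $\sqrt{\cdot}$ modulus yields a $\frac12$-Hölder gradient. The natural source of the exponent is that the metric density $1/\delta_\Omega$ is only Lipschitz, so geodesics are $\mathcal{C}^{1,1}$ but their dependence on endpoints is only $\frac12$-Hölder.

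\emph{Step 3: regularity of $\Psi_2$ and invertibility of the differential.} The differential of $\Psi_2$ comes from Jacobi-field-type estimates. In a $\CAT(2)$ space, geodesics from $p$ diverge at least as fast as on the sphere of curvature $2$, and the relevant comparison quantity $\sin(\sqrt2 t)/\sqrt2$ is positive for $t<D_2$. This gives a quantitative lower bound on $|\Psi_2(x)-\Psi_2(y)|$ in terms of the distance from $x$ to $y$ orthogonal to the radial direction. Such an expansion bound on the inverse map, combined with Hölder bounds on $\Psi$, shows that $D\Psi$ exists, is $\frac12$-Hölder, and is invertible. The inverse function theorem then gives a $\mathcal{C}^{1,\frac12}$ diffeomorphism.

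I expect the main obstacle to be the differentiability of $\Psi_2$. The quasihyperbolic metric is not $\mathcal{C}^2$, so there is no smooth exponential map and no classical Jacobi field theory. My first attempt would be to use the Euclidean ODE for quasihyperbolic geodesics, $\ddot\gamma$ governed by $\nabla\log\delta_\Omega$. Here $\nabla\log\delta_\Omega$ is defined only almost everywhere but is bounded, so I would approximate $\delta_\Omega$ by smooth semiconcave functions that keep a uniform sectional curvature bound near $2$. I would then run classical Jacobi estimates on the approximations and pass to the limit, using the uniqueness of geodesics from $\CAT(2)$ for stability.

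Finally, for the version with $D_\kappa$: everything above used only the $\CAT(\kappa)$ comparison and uniqueness below $D_\kappa$, so the same argument goes through with $\kappa$ in place of $2$.
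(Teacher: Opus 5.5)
\textbf{The gap is in Step~1.} The map $\Psi_2$ that sends $x$ to the initial direction at $p$ of the geodesic $[p,x]$ is continuous and surjective, but in general it is not injective, so $(\Psi_1,\Psi_2)$ is not a bijection. $\CAT(2)$ gives uniqueness of the geodesic between two points. It does not give uniqueness of the geodesic with a prescribed initial direction, and that uniqueness genuinely fails here because $1/\delta_\Omega$ is only Lipschitz. In Euclidean arclength a quasihyperbolic geodesic solves $T'=\nabla F-\langle\nabla F,T\rangle T$ with $F=-\log\delta_\Omega$, and the right-hand side jumps across the medial axis.

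Concretely, take $\Omega=\R^n\setminus\{\pm e_1\}$, $p=0$, and a unit vector $v\perp e_1$. The segment $s\mapsto sv$ is a $k_\Omega$-geodesic: it is the common fixed set of the linear isometries that fix $v$ and preserve $\{\pm e_1\}$, and short geodesics are unique. Now let $\sigma$ be the geodesic of the flat cylinder metric $|x-e_1|^{-2}g_{\euc}$ with $\sigma(0)=0$ and $\sigma'(0)=v$. It has $T'(0)=e_1$, so it immediately enters $\{x_1>0\}$, where $\delta_\Omega=|x-e_1|$. Since $|x-e_1|^{-1}\le\delta_\Omega^{-1}$ everywhere, $\sigma$ is also $k_\Omega$-minimizing for short times. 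Take $q_1$ on the segment and $q_2$ on $\sigma$ at the same small distance $t$ from $p$. Then $\Psi(q_1)=\Psi(q_2)=(t,v)$ although $q_1\neq q_2$.

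So your ``$\exp_p$'' is multivalued, the claim that angles at $p$ control distances is false, and Step~3 is aimed at a map that cannot be a diffeomorphism. The smoothing-plus-Jacobi plan would not help even apart from this. Approximants such as $g_\beta$ have curvature unbounded below near the medial axis, through the term $-\beta\operatorname{tr}_\Pi\mathcal C_\beta$, so Rauch comparison gives only non-contraction, not $\mathcal{C}^1$ control of $D\exp_p$.

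\textbf{Step~2 also needs repair.} An upper curvature bound controls $d_p$ from below along geodesics, which is a convexity-type bound, not semiconcavity. The $\sqrt{\cdot}$ mechanism you describe is not an argument. Your gradient formula should read $\nabla d_p(x)=\nu_x/\delta_\Omega(x)$, where $\nu_x$ is the terminal Euclidean unit tangent of $[p,x]$.

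\textbf{How the paper avoids the problem.} It never uses initial directions. It works only with terminal tangents, which are well defined because $[p,x]$ is unique.
\begin{itemize}
\item $\CAT(\kappa)$ comparison gives $\|\gamma_x-\gamma_y\|_{\mathcal{C}^0}\lesssim|x-y|$.
\item Martin's estimate gives uniform $\mathcal{C}^{1,1}$ bounds on the $\gamma_x$.
\item The Landau-type inequality $\|h'\|_{\mathcal{C}^0}\lesssim\|h\|_{\mathcal{C}^0}+\|h\|_{\mathcal{C}^0}^{1/2}\operatorname{Lip}(h')^{1/2}$ turns these into $\tfrac12$-H\"older dependence of $\gamma_x'$ on $x$. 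Hence $f=k_\Omega(p,\cdot)\in\mathcal{C}^{1,\frac12}_{\text{loc}}$ with $|\nabla f|=1/\delta_\Omega>0$.
\end{itemize}
The second component is then built without geodesics. The flow of a smooth field $Z$ with $\tfrac12\le Df(Z)\le2$, together with properness of $f$ (an Ehresmann-type argument), identifies $U_p$ with $(0,D_\kappa)\times f^{-1}(r)$. For $r<D_2$, the Euclidean radial projection $f^{-1}(r)\to\Sph^{n-1}$ is a $\mathcal{C}^{1,\frac12}$-diffeomorphism because $\langle x-p,\nu_x\rangle>0$. That positivity follows from Martin's bound $\operatorname{Lip}(T)\le1$ in quasihyperbolic arclength and the estimate $|\log(\delta_\Omega(\gamma(t))/\delta_\Omega(\gamma(s)))|\le|t-s|$.
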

The statement is sharp in the following sense: if $n \geq 2$, then $( \R^n \setminus \{0\}, k_{ \R^n \setminus \{0\} } )$ is isometric to $\R \times \mathbb{S}^{n-1}$ and thus $\mathrm{CAT}(1)$. The distance from a point to its cut locus in $\R \times \mathbb{S}^{n-1}$ is exactly $D_1 = \pi$ and the corresponding fiber of the distance function is not even a topological manifold.

To formulate our last theorem, we recall the following conjecture by Väisälä, also from \cite{Vaisala2009}.

\medskip

\textit{Convexity conjecture.} There is a universal constant $c_C>0$ such that the quasihyperbolic ball
$\overline{B}_{\kqh}(p,r)$ is strictly convex for all $r<c_C$. That is, for each $x,y \in \overline{B}_{\kqh}(p,r)$, the Euclidean line segment joining $x$ to $y$ lies in $B_{\kqh}(p,r) \cup \{x,y\}$.

\medskip

Väisälä proved that a planar quasihyperbolic domain satisfies the convexity conjecture with the constant $c_{C} = 1$ which is sharp by explicit examples due to Klén \cite{Klen-2008}. Luiro has announced in \cite{Luiro-2015} that the convexity conjecture holds in all dimensions with the constant $c_C = 1/100$; unfortunately \cite{Luiro-2015} remains unpublished. Using similar techniques as in the proof of Theorem~\ref{theorem-c1-regular-spheres}, we prove the following.
\begin{theorem}\label{theorem-small-balls-euclidean-convex}
     Let $( \Omega, k_\Omega )$ be a quasihyperbolic domain. Then the following holds.
     \begin{enumerate}
         \item if $(\Omega,k_\Omega)$ is $\CAT(\kappa)$ for $0 < \kappa \leq 2$, then the conclusion of the convexity conjecture holds for the constant $\arctan( \sqrt{\kappa} )/\sqrt{\kappa}$.
         \item if $\kappa \leq 0$ or $n = 2$, the conclusion holds up to radius one.
     \end{enumerate}
     In particular, $c_{C} = \arctan(\sqrt{2})/\sqrt{2} \approx 0.6755 $ in all dimensions.
\end{theorem}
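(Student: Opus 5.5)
The plan is to prove strict Euclidean convexity of $\overline{B}_{k_\Omega}(p,r)$ by showing that $u(x) = k_\Omega(x,p)$ has Euclidean restrictions to lines that are strictly convex near their minimum, whenever $u < \arctan(\sqrt{\kappa})/\sqrt{\kappa}$. Concretely, fix a Euclidean segment $\sigma(t) = x + t(y-x)$ with endpoints in $\overline{B}_{k_\Omega}(p,r)$. We show that $f(t) = u(\sigma(t))$ cannot attain an interior value $\ge r$. By Theorem~\ref{theorem-c1-regular-spheres}, $u$ is $\mathcal{C}^{1,\frac12}$ away from $p$ inside $B_{k_\Omega}(p,D_\kappa)$, and its $k_\Omega$-gradient has unit length. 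Hence the only task is a second-order lower bound in the barrier/viscosity sense.

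\textbf{Step 1: comparison.} Use the $\CAT(\kappa)$ property from Theorem~\ref{thm:universal-cat2} (or the hypothesis). Since geodesics from $p$ are unique, $u$ satisfies, in the barrier sense along $k_\Omega$-geodesics, $\Hess u \ge \sqrt{\kappa}\cot(\sqrt{\kappa}u)\,(g - du\otimes du)$. This is the standard $\CAT(\kappa)$ comparison for distance functions, valid below $D_\kappa$.

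\textbf{Step 2: conformal change.} The quasihyperbolic metric is $g = \delta_\Omega^{-2}g_{\euc}$. The Euclidean second derivative $\frac{d^2}{dt^2}u(\sigma(t))$ therefore differs from the $g$-Hessian applied to $\sigma'$ by a Christoffel term involving $\nabla\log\delta_\Omega$. Because $\delta_\Omega$ is not smooth, I would avoid Christoffel symbols. Instead, I would rely on the fact that $\delta_\Omega$ is $1$-Lipschitz and semiconcave in the right direction: $\delta_\Omega = \inf_q|x-q|$ is an infimum of Euclidean distance functions. This gives support functions from above, and a conformal metric $|x-q|^{-2}g_{\euc}$ dominates $g$ locally, touching it at the point.

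The aim is an inequality of the form $f''(t) \ge \delta^{-1}|\sigma'|^2\bigl(\sqrt{\kappa}\cot(\sqrt{\kappa}u)\sin^2\theta - \text{(conformal error)}\bigr)$. Here the conformal error is controlled by $|\nabla \delta_\Omega| \le 1$ and $|du|_g = 1$, and is at most $|\cos\theta|$-type terms bounded by $1$. At an interior maximum of $f$ we have $f'=0$, so $\sigma'$ is orthogonal to $\nabla u$, i.e.\ $\sin\theta = 1$. The condition then becomes $\sqrt{\kappa}\cot(\sqrt{\kappa}u) > 1$, which is exactly $u < \arctan(1/\ldots)$. Matching the normalization of the claim gives $\tan(\sqrt{\kappa}u) < \sqrt{\kappa}$, i.e.\ $u < \arctan(\sqrt{\kappa})/\sqrt{\kappa}$, and the sign/normalization must be checked carefully in the computation. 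For $\kappa \le 0$ the comparison term is at least $1/u$, giving radius $1$. For $n=2$ one uses Herron's curvature bound $0$ instead, which yields radius one similarly.

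\textbf{Step 3: conclusion.} $f$ is $\mathcal{C}^1$ and satisfies $f''>0$ in the barrier sense wherever $f'=0$ and $f<c$. A maximum principle argument then shows that $f$ has no interior maximum at a level $\ge r$, so the open segment lies in $B_{k_\Omega}(p,r)$. Segments through $p$ are handled directly, since $u$ grows near $p$ like $|x-p|/\delta_\Omega(p)$.

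\textbf{Main obstacle.} The main obstacle is Step 2: making the conformal error term rigorous when $\delta_\Omega$ is only Lipschitz. I would first try replacing $\delta_\Omega$ by the model $|x-q|$ for a nearest boundary point $q$ of $\sigma(t_0)$. For that model I would compare $k_\Omega$ with the $\R\times\Sph^{n-1}$ metric of $\R^n\setminus\{q\}$, which satisfies $k_\Omega \ge$ it, and carry out the explicit second-order computation there.
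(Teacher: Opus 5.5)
Your smooth heuristic is correct and gives the right threshold. Write $g=e^{2F}g_{\euc}$ with $F=-\log\delta_\Omega$. At a critical point of $f(t)=u(\sigma(t))$ one has $f''=\Hess^g u(\sigma',\sigma')-|\sigma'|^2\langle\nabla u,\nabla F\rangle\geq\delta_\Omega^{-2}|\sigma'|^2(\operatorname{ct}_\kappa(u)-1)$; note the prefactor is $\delta_\Omega^{-2}$, not $\delta_\Omega^{-1}$.

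However, Step~2, which you flag yourself, is a genuine gap, and the remedy you sketch does not close it.

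\begin{itemize}
\item The $\CAT(\kappa)$ comparison only gives lower barriers for $u$ along $\kqh$-geodesics. To transfer them to a Euclidean segment, you need second-order control of how $\kqh$-geodesics bend away from straight lines.
\item Over a short arc through $z=\sigma(t_0)$, that bending is governed by $\nabla\log\delta_\Omega$ along the arc. There, nearest boundary points may differ from your chosen $q$, and $\delta_\Omega$ need not be differentiable. Freezing one nearest point $q$ therefore discards exactly the information you need.
\item The inequality $\delta_\Omega\leq|\cdot-q|$ gives $\kqh\geq k_{\R^n\setminus\{q\}}$, so $g$ dominates the model, not conversely. This one-sided inequality does not touch $u$ at $z$, since in general $u(z)>k_{\R^n\setminus\{q\}}(p,z)$. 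It therefore yields no barrier for $u$ at $z$ and says nothing about $\kqh$-geodesics.
\end{itemize}

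The missing ingredient is Martin's estimate, Lemma~\ref{lemma-lipschitz-constant-of-qh-geodesics}: the Euclidean unit tangent $T$ of a quasihyperbolic geodesic satisfies $\operatorname{lip}(T)\leq 1$ in quasihyperbolic arclength. This is the nonsmooth replacement for the bound $|\pi_{T^\perp}\nabla F|\leq\delta_\Omega^{-1}$ on your Christoffel term.

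The paper uses it at first order, with no Hessians.
\begin{itemize}
\item Take nearby $x,y\in f^{-1}(r)$ with $d=\kqh(x,y)$. Angle comparison at $x$ in the triangle $p,x,y$, combined with the coincidence of Alexandrov and Euclidean angles (Proposition~\ref{proposition-coincidence-of-two-angles}), gives $\langle T(0),\nu_x\rangle\leq-\operatorname{ct}_\kappa(r)\operatorname{tn}_\kappa(d/2)$.
\item Integrate $y-x=\int_0^d\delta_\Omega(\gamma(t))T(t)\,dt$, using $|T(t)-T(0)|\leq t$ and $e^{-t}\leq\delta_\Omega(\gamma(t))/\delta_\Omega(x)\leq e^{t}$.
\item This yields $\langle y-x,\nu_x\rangle<0$ for small $d$ exactly when $\operatorname{ct}_\kappa(r)>1$, i.e.\ local strict convexity of the $\mathcal{C}^1$ sphere.
\end{itemize}

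With these two tools your maximum-principle scheme can be repaired. Run the same computation at the maximum point $z$ for the triangle $p,z,\sigma(t_0+s)$, using $\langle\sigma'(t_0),\nu_z\rangle=0$. This essentially reduces it to the paper's argument.

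Two smaller points:
\begin{itemize}
\item Step~3 needs a continuity argument keeping the segment inside the region $u<\arctan(\sqrt\kappa)/\sqrt\kappa$ where the comparison is available, since a priori the maximum of $f$ could exceed it.
\item For $n=2$, a local curvature bound $\leq0$ does not give comparison for triangles with a vertex at $p$. You need that $B_{\kqh}(p,\pi/2)$ is $\CAT(0)$, which follows from $\CAT(1)$ and Cartan--Hadamard as in Remark~\ref{remark-small-balls-are-cat(0)-in-two-dimensions}.
\end{itemize}
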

The constant $c_{C}$ improves on Luiro's announced constant. Aside from the planar case, the authors do not know if the convexity constant is sharp.

\subsection{Outline of the manuscript}
We recall prerequisites from Riemannian and conformal geometry in Section~\ref{section-preliminaries-riemannian-geometry} and from metric and quasihyperbolic geometry in Section~\ref{section-preliminaries-metric-geometry}.

In Section~\ref{section-qh-metric-in-finite-punctures}, we establish the $\mathrm{CAT}(2)$ property for quasihyperbolic domains whose complements are finite. The proof involves an approximation of the quasihyperbolic density by suitable smooth densities which satisfy a sectional curvature upper bound $\leq 2$ for $n \geq 3$ (and $\leq 0$ for $n = 2$). We also verify the necessary injectivity radius lower bound for the smooth metrics in two parts. First, we analyze the injectivity radius near the cylindrical ends at infinity using the stability of the geodesic flow due to Sakai \cite{Sakai-1983-on-continuity-of-injectivity-radius-function}. Second, for a large enough compact set containing the medial axis of the domain, we bound the injectivity radius using the Fenchel--Borsuk theorem \cite{Fenchel,Borsuk} on the total curvature of $\mathcal{C}^2$-regular closed curves; we also employ Klingenberg's characterization of the injectivity radius on complete Riemannian manifolds. Our analysis leads to the $\mathrm{CAT}(2)$ property of the approximations and, by stability, for the domain itself.

In Section~\ref{theorem-proof-of-many-results}, we prove Theorems~\ref{thm:universal-cat2}, \ref{theorem-cbb-equivalent-to-concavity}, and \ref{thm:convex-cat0}. The proof of Theorem~\ref{thm:universal-cat2} essentially follows from the previous sections by stability. The proof of Theorem~\ref{theorem-cbb-equivalent-to-concavity} has two key directions. First, in case $\Omega$ is concave, the convexity and $\mathcal{C}^{1,1}_{\text{loc}}$-regularity of $\delta_\Omega$ follow from convex analysis. This allows us to establish a direct link between a distributional sectional curvature bound of the quasihyperbolic Riemannian metric $\delta_{\Omega}^{-2}g_{\euc}$ and the Alexandrov curvature lower bound of $( \Omega, k_\Omega )$ using the recent approximation results of $\mathcal{C}^1$-metrics from \cite{eros-kunzinger-obanyan-vardabasso-2026-distributional-sectional-curvature-bounds-for-riemannian-metrics-of-low-regularity}. In the converse direction, we employ the theory of metric spaces with two-sided Alexandrov curvature bounds established by Berestovski\u{\i} and Nikolaev \cite{Berestovskij-1976-introduction-of-a-riemann-structure-into-certain-metric-spaces,nikolaev-1983-smoothness-of-the-metric-of-spaces-with-bilaterally-bounded-curvature-in-the-sense-of-alexandrov} and the regularity theory of angle-preserving mappings for low-regularity Riemannian metrics by Iwaniec \cite{iwaniec-1982-regularity-theorems-for-solutions-of-partial-differential-equations-for-quasiconformal-mappings-in-several-dimensions}. Regarding the proof of Theorem~\ref{thm:convex-cat0}, we first establish the equivalence between the convexity of $\Omega$ and that of $x \mapsto -\log(\delta_{\Omega}(x))$. This links the proof to a recent conformal deformation theorem of $\mathrm{CAT}(0)$ spaces due to Lytchak and Stadler \cite{LytchakStadler}.

In Section~\ref{section-consequences-of-theorems}, we establish Theorem~\ref{theorem-resolution-of-conjectures} as a corollary of Theorem~\ref{thm:universal-cat2}. We also analyze families of quasihyperbolic geodesics for balls whose radii are below the injectivity radius $D_\kappa$, eventually leading to the proof of Theorem~\ref{theorem-c1-regular-spheres}. During the proof, we use the regularity of quasihyperbolic geodesics by Martin \cite{Martin1985-qc-and-bi-lipschitz-homeomorphisms-uniform-domains-and-the-quasihyperbolic-metric} and the variation estimates for geodesics in $\CAT(\kappa)$ spaces. Together with a Landau-type interpolation formula, the regularity of the geodesics lead to the $\mathcal{C}^{1,\frac{1}{2}}$-regularity of the metric spheres $0 < r < D_\kappa$. By using a radial projection argument due to Väisälä \cite{vaisala-2007-quasihyperbolic-geometry-of-domains-in-hilbert-spaces}, we prove that metric spheres with radius $0 < r < D_2$ are $\mathcal{C}^{1,\frac{1}{2}}$-diffeomorphic to the sphere $\Sph^{n-1}$. Extending the conclusion to the interval $0 < r < D_\kappa$ and deducing the existence of $\Psi$ from Theorem~\ref{theorem-c1-regular-spheres} goes back to a structure theorem of proper submersions due to Ehresmann~\cite{ehresmann-1951-infinitesimal-connections-in-differentiable-fiber-spaces}. However, a subtlety is caused by the low regularity of the gradient flow of $x \mapsto k_\Omega(p,x)$. We also prove the convexity result, Theorem~\ref{theorem-small-balls-euclidean-convex}, in this section.

In the final section, Section~\ref{section-examples}, we provide two examples. The first one establishes Theorem~\ref{thm:sharp-cat2}. The second example concerns a smooth concave non-convex domain which is $\mathrm{CAT}(0)$.

\section{Preliminaries on Riemannian geometry}\label{section-preliminaries-riemannian-geometry}

\subsection{Injectivity and conjugate radii}

Let $(M,g)$ be a connected complete Riemannian manifold (without boundary), and let $d_g$ be its Riemannian distance. By the Hopf--Rinow theorem, metric completeness is equivalent to geodesic completeness; hence the exponential map $\exp_p\colon T_pM\to M$ is defined on all of $T_pM$ for every $p\in M$.

\begin{definition}[Riemannian injectivity radius]
\label{def:riemannian-injectivity-radius}
The \emph{injectivity radius at $p$} is denoted by $\injrad_{g}(p)$ and defined as the supremum over the radii $r > 0$ for which $\left.\exp_p\right|_{B_{T_pM}(0,r)}$ is a diffeomorphism onto its image.

The \emph{global Riemannian injectivity radius} is
$$
\injrad_g(M)
\coloneqq
\inf_{p\in M}\injrad_g(p).
$$
The value $+\infty$ is allowed.
\end{definition}

For $v\in T_pM$ with $|v|_g=1$, put
$\gamma_v(t)=\exp_p(tv)$ and define its \emph{cut time} by
$$
c_p(v)
\coloneqq
\sup\left\{
t>0\colon
\gamma_v|_{[0,t]}
\text{ is minimizing}
\right\}
\in(0,+\infty].
$$
The \emph{cut locus} of $p$ is
$$
\Cut(p)
\coloneqq
\left\{
\exp_p(c_p(v)v)\colon
|v|_g=1,\ c_p(v)<\infty
\right\}.
$$
For a complete Riemannian manifold,
\begin{equation}
\injrad_g(p)
=
\inf_{|v|_g=1}c_p(v)
=
d_g\bigl(p,\Cut(p)\bigr),
\label{eq:injectivity-cut-locus}
\end{equation}
where the distance to the empty set is $+\infty$. The cut locus is
closed, and the function $p\mapsto\injrad_g(p)$ is continuous; see
\cite[Theorem~10.34(a) and Propositions~10.36--10.37]{Lee}.

\begin{definition}
Two points $p$ and $\gamma_v(t)$ are \emph{conjugate along}
$\gamma_v|_{[0,t]}$ if there is a nonzero Jacobi field along this
segment which vanishes at both endpoints. Equivalently, the differential
$(D\exp_p)_{tv}$ is not invertible. Define
$$
j_p(v)
\coloneqq
\inf\left\{
t>0\colon(D\exp_p)_{tv}\text{ is not invertible}
\right\},
$$
where $\inf\emptyset=+\infty$.
\end{definition}

\begin{definition}[Conjugate radius]
\label{def:conjugate-radius}
The conjugate radii at $p$ and of $(M,g)$ are
$$
\conjrad_g(p)
\coloneqq
\inf_{|v|_g=1}j_p(v),
\quad
\conjrad_g(M)
\coloneqq
\inf_{p\in M}\conjrad_g(p).
$$
\end{definition}
We conclude from \eqref{eq:injectivity-cut-locus} that
$$
\injrad_g(p)\leq\conjrad_g(p),
\quad
\injrad_g(M)\leq\conjrad_g(M).
$$
By \cite[Theorem~11.12]{Lee}, we have the following: if the sectional curvature of $g$ is at most $\kappa$, the Rauch
comparison theorem gives
    \begin{equation}\label{eq:rauch-conjugate-radius}
        \conjrad_g(M)
        \geq
        D_\kappa,
    \end{equation}
    where
    \begin{equation}\label{equation-d-kappa}
        D_\kappa
        =
        \begin{cases}
            \displaystyle\frac{\pi}{\sqrt\kappa},&\kappa>0,\\[6pt]
            +\infty,&\kappa\leq0.
        \end{cases}
    \end{equation} 
    
More generally, the following well-known result by Klingenberg relates the injectivity and conjugate radii.
\begin{theorem}[{Klingenberg's lemma, \cite[Lemma 1]{Klingenberg-1959-contributions-to-riemannian-geometry-in-the-large}}]\label{theorem-klingenberg-lemma}
    Let $(M,g)$ be a complete Riemannian manifold. In case $\injrad_g(p) < \conjrad_g(p)$, there exists a geodesic loop $\gamma \colon [0, \ell] \to (M,g)$ with $\gamma(0) = p = \gamma(\ell)$ and of length $\ell = 2 \injrad_g(p)$. If $\injrad_g( \gamma( \ell/2) ) = \ell/2$, then $\gamma$ extends to a smooth periodic geodesic $\R \to ( M, g )$.
\end{theorem}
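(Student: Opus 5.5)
This is Klingenberg's classical lemma, so I would follow his original argument: minimizing geodesics to a nearest cut point, a first-variation argument that forces a loop, and then a symmetric argument at the midpoint.

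\emph{Step 1: a closest cut point and two minimizers.} Set $r = \injrad_g(p)$. By \eqref{eq:injectivity-cut-locus}, $r = d_g(p,\Cut(p))$. The cut locus is closed and $\exp_p$ is proper, so there is a point $q \in \Cut(p)$ with $d_g(p,q) = r$. Since $r < \conjrad_g(p)$, the point $q$ is not conjugate to $p$ along any minimizing geodesic. The standard characterization of cut points then applies: a cut point is either first conjugate or is reached by at least two distinct minimizing geodesics. Hence there are two distinct unit-speed minimizers $\sigma_1, \sigma_2 \colon [0,r] \to M$ from $p$ to $q$.

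\emph{Step 2: the two minimizers meet head-on at $q$.} I claim $\dot\sigma_1(r) = -\dot\sigma_2(r)$. Suppose not. Then there is $w \in T_qM$ with $\langle w,\dot\sigma_i(r)\rangle < 0$ for both $i$. Because $q$ is nonconjugate along each $\sigma_i$, $\exp_p$ is a local diffeomorphism near $r\dot\sigma_i(0)$. Let $q_s = \exp_q(sw)$. Lifting $q_s$ through each local inverse gives geodesics $\sigma_i^s$ from $p$ to $q_s$ which stay close to $\sigma_i$. By the first variation formula, both have length $< r$ for small $s>0$. These two geodesics are distinct, since their initial vectors are near the distinct vectors $\dot\sigma_1(0) \neq \dot\sigma_2(0)$. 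Let $\tau$ be a minimizer from $p$ to $q_s$; it has length $<r<\injrad_g(p)$, so it is the unique geodesic from $p$ to $q_s$ of length $<\injrad_g(p)$. But $\sigma_1^s$ and $\sigma_2^s$ are two distinct such geodesics, a contradiction. This step is the main technical point: the non-conjugacy is exactly what makes the local lifting possible. Concatenating $\sigma_1$ with $\sigma_2$ reversed therefore gives a smooth geodesic through $q$, i.e.\ a geodesic loop $\gamma$ at $p$ of length $\ell = 2r$ with $\gamma(\ell/2) = q$.

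\emph{Step 3: the periodicity clause.} Assume $\injrad_g(q) = \ell/2 = r$. I would repeat Step 2 with the roles of $p$ and $q$ exchanged. First, $p \in \Cut(q)$, because two distinct minimizers join $q$ to $p$. Also $d(q,p) = r = \injrad_g(q)$, so $p$ is a closest cut point of $q$. For the nonconjugacy hypothesis, I need $p$ not conjugate to $q$ along $\sigma_i$. Conjugacy is symmetric along a geodesic segment, so this follows from $q$ not being conjugate to $p$, which was established in Step 1. The argument of Step 2 at $p$ then gives $\dot\sigma_1(0) = -\dot\sigma_2(0)$. Thus the loop also closes smoothly at $p$, so $\gamma$ is a closed geodesic and extends periodically to a smooth geodesic $\R \to M$.
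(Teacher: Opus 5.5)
Your proof is correct. The paper gives no proof of its own and simply cites Klingenberg, and your argument is the classical one. Non-conjugacy yields two distinct minimizers to a nearest cut point $q$. The first variation formula, combined with injectivity of $\exp_p$ on $B_{T_pM}(0,r)$, forces $\dot\sigma_1(r)=-\dot\sigma_2(r)$. The same argument based at $q$, using symmetry of conjugacy and only $\injrad_g(q)\geq r$, closes the loop smoothly at $p$.

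There are two small slips to fix:
\begin{enumerate}
\item In Step~1, $\exp_p$ is not proper in general; on a compact $M$ the preimage of $M$ is all of $T_pM$. What you actually need is this: $\Cut(p)$ is closed, and it is nonempty because $r<\conjrad_g(p)$ forces $r<\infty$. Closed bounded sets of a complete manifold are compact by Hopf--Rinow, so $d_g(p,\cdot)$ attains its minimum on $\Cut(p)\cap\overline{B}(p,r+1)$.
\item In Step~2 the chain should read $<r=\injrad_g(p)$, not $<r<\injrad_g(p)$.
\end{enumerate}
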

In the statement of the theorem, we use the following definition.
\begin{definition}
For $L>0$, a unit-speed \emph{geodesic loop based at $p$} is a geodesic
$\gamma\colon[0,L]\to M$ satisfying
$\gamma(0)=\gamma(L)=p$. It is a \emph{smooth closed geodesic} if it
defines a smooth geodesic
$$
\R/(L\mathbb Z)\longrightarrow M,
$$
or, equivalently, if $\dot\gamma(0)=\dot\gamma(L)$.
\end{definition}
When $M=U\subset\R^n$ and $g$ is a smooth Riemannian metric on $M$, every nonconstant smooth
closed $g$-geodesic is a smooth regular closed curve in $\R^n$. We use this fact during the proof of Proposition~\ref{proposition-injectivity-radius-lower-bound}.

\subsection{Conformal changes and total curvature}\label{subsec:conformal-change}

Throughout the manuscript, $g_{\euc}$ denotes the Euclidean Riemannian metric.
For a function $f$, the symbols $\nabla f$ and $\Hess f$ denote its
Euclidean gradient and Hessian, respectively, while
$\langle\cdot,\cdot\rangle$ and $|\cdot|$ denote the Euclidean inner product and norm. Unless otherwise specified, orthogonality, orthogonal
complements, orthogonal projections, and traces are also understood
with respect to $g_{\euc}$.

For a Riemannian metric $g$, let $\nabla^g$ denote its Levi--Civita
connection. We use the curvature convention
\begin{equation*}
    R^g(X,Y)Z
    =
    \nabla_X^g\nabla_Y^gZ
    -\nabla_Y^g\nabla_X^gZ
    -\nabla_{[X,Y]}^gZ.
\end{equation*}
Thus, if $x\in U$ and $\Pi\subset T_xU$ is a two-plane, and
$(E_1,E_2)$ is a $g_x$-orthonormal basis of $\Pi$, then the sectional
curvature of $g$ at $x$ in the direction of $\Pi$ is
$$
K_g(x,\Pi)
=
g_x\bigl(R^g(E_1,E_2)E_2,E_1\bigr).
$$

For $u\in T_xU$, we use $u\otimes u$ to denote the symmetric covariant
bilinear form determined by the Euclidean metric:
$$
(u\otimes u)(v,z)
=
\langle u,v\rangle\langle u,z\rangle,
\quad v,z\in T_xU.
$$

Let $U\subset\R^n$ be open, let $F\in C^2(U)$, and set
$$
g=e^{2F}g_{\euc}.
$$
Below $D$ denotes the Euclidean Levi--Civita connection. The Koszul
formula gives
\begin{equation}
    \nabla_X^gY
    =
    D_XY+dF(X)Y+dF(Y)X
    -\langle X,Y\rangle\nabla F
    \label{eq:conformal-connection}
\end{equation}
for all $C^1$ vector fields $X,Y$ on $U$.

For later use, we record the following formula which is the special Euclidean version of the conformal transformation law for the Riemann curvature tensor; see
\cite[Theorem~7.30, equation~(7.44), p.~217]{Lee}.
We record it explicitly to fix notation.

\begin{lemma}[Conformal curvature formula]\label{lem:conformal-curvature}
Let $U\subset\R^n$ be open, $n\geq2$, $F\in C^2(U)$, and equip
$U$ with the Riemannian metric $g=e^{2F}g_{\euc}$. For $x\in U$ and a
two-plane $\Pi\subset T_xU$, one has
\begin{align}
e^{2F(x)}K_g(x,\Pi)
&=
-\operatorname{tr}_{\Pi}\bigl((\Hess F)_x\bigr)
+\left|\pi_{\Pi}\bigl(\nabla F(x)\bigr)\right|^2
-\left|\nabla F(x)\right|^2
\label{eq:general-conformal-curvature}\\
&=
-\operatorname{tr}_{\Pi}\bigl((\Hess F)_x\bigr)
-\left|
\pi_{\Pi^\perp}\bigl(\nabla F(x)\bigr)
\right|^2.
\nonumber
\end{align}
\end{lemma}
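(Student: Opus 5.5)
The plan is to derive the formula directly from the Euclidean connection formula \eqref{eq:conformal-connection}. Fix $x\in U$ and a two-plane $\Pi\subset T_xU$. Since both sides of \eqref{eq:general-conformal-curvature} depend only on $\Pi$, we choose a Euclidean orthonormal basis $(e_1,e_2)$ of $\Pi$ and extend it to constant vector fields $X=e_1$, $Y=e_2$ on $U$. Then $[X,Y]=0$, $D_XY=0$, and $\langle X,Y\rangle=0$. The vectors $E_i=e^{-F(x)}e_i$ form a $g_x$-orthonormal basis, so
$$e^{2F(x)}K_g(x,\Pi)=e^{-2F(x)}g_x\bigl(R^g(e_1,e_2)e_2,e_1\bigr)=\langle R^g(e_1,e_2)e_2,e_1\rangle.$$
It therefore suffices to compute $\langle \nabla^g_{e_1}\nabla^g_{e_2}e_2-\nabla^g_{e_2}\nabla^g_{e_1}e_2,\,e_1\rangle$ at $x$. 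We write $F_i=dF(e_i)$ and $F_{ij}=\Hess F(e_i,e_j)$.

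By \eqref{eq:conformal-connection} with constant fields, $\nabla^g_{e_2}e_2=2F_2e_2-\nabla F$ and $\nabla^g_{e_1}e_2=F_1e_2+F_2e_1$. Applying \eqref{eq:conformal-connection} once more, with $Z=\nabla^g_{e_2}e_2$, gives
$$\nabla^g_{e_1}Z=D_{e_1}Z+F_1Z+dF(Z)e_1-\langle e_1,Z\rangle\nabla F.$$
Here $D_{e_1}Z=2F_{12}e_2-D_{e_1}\nabla F$, so $\langle D_{e_1}Z,e_1\rangle=-F_{11}$. We also have $\langle Z,e_1\rangle=-F_1$ and $dF(Z)=2F_2^2-|\nabla F|^2$. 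Collecting the $e_1$-components,
$$\langle\nabla^g_{e_1}\nabla^g_{e_2}e_2,e_1\rangle=-F_{11}-F_1^2+2F_2^2-|\nabla F|^2+F_1^2=-F_{11}+2F_2^2-|\nabla F|^2.$$

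For the second term, set $W=F_1e_2+F_2e_1$. Then $D_{e_2}W=F_{12}e_2+F_{22}e_1$ and $dF(W)=2F_1F_2$, so
$$\nabla^g_{e_2}W=D_{e_2}W+F_2W+2F_1F_2e_2-\langle e_2,W\rangle\nabla F,$$
with $\langle e_2,W\rangle=F_1$. Its $e_1$-component is $F_{22}+F_2^2-F_1^2$. Subtracting,
$$\langle R^g(e_1,e_2)e_2,e_1\rangle=-F_{11}-F_{22}+F_2^2+F_1^2-|\nabla F|^2.$$
Since $F_{11}+F_{22}=\operatorname{tr}_\Pi(\Hess F)_x$ and $F_1^2+F_2^2=|\pi_\Pi(\nabla F(x))|^2$, this is exactly \eqref{eq:general-conformal-curvature}.

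The second line of the lemma follows from the Pythagorean identity $|\nabla F|^2=|\pi_\Pi\nabla F|^2+|\pi_{\Pi^\perp}\nabla F|^2$.

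Regularity is not an issue: $F\in C^2$ makes the Christoffel symbols $C^1$, so $R^g$ is continuous and the computation above is legitimate. Note that $\nabla^g$ is $C^1$, so the second derivative only ever hits $\nabla F$.

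The only real obstacle is sign and bookkeeping care, in particular tracking the $e_1$-component of $dF(Z)e_1$ and of $-\langle e_1,Z\rangle\nabla F$. As a cross-check, one can compare with the known formula for $n=2$, $K_g=-e^{-2F}\Delta F$; there $\Pi^\perp=0$, which is consistent with the result.
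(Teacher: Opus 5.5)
Your proof is correct. The normalization $e^{2F(x)}K_g(x,\Pi)=\langle R^g(e_1,e_2)e_2,e_1\rangle$ is right. So are the $e_1$-components $-F_{11}+2F_2^2-|\nabla F|^2$ and $F_{22}+F_2^2-F_1^2$, and their difference is exactly the first line of \eqref{eq:general-conformal-curvature}.

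The paper does not carry out this computation. It cites the general conformal transformation law for the Riemann tensor in Lee (Theorem~7.30, equation~(7.44)). That law expresses $\mathrm{Rm}_{e^{2F}g}$ through $\mathrm{Rm}_g$ and Kulkarni--Nomizu products of $\Hess F$, $dF\otimes dF$ and $g$ with $g$; one then sets $g=g_{\euc}$ and evaluates on an orthonormal pair.

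The citation route is shorter and works over any background metric. However, the reader must unpack the Kulkarni--Nomizu product and match sign conventions. Also, Lee works in the smooth category, so the $C^2$ hypothesis in the lemma strictly needs a word of justification.

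Your route is self-contained and uses only the connection formula \eqref{eq:conformal-connection}, which the paper already records. It exploits flatness through constant, commuting Euclidean frames. It also makes the $C^2$ regularity transparent, because second derivatives of $F$ enter only through $D_{e_i}\nabla F$.
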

Here and below, $\pi_\Pi\colon T_xU\to\Pi$ is the Euclidean orthogonal projection,
$\Pi^\perp$ is the Euclidean orthogonal complement of $\Pi$, and
$$
\operatorname{tr}_{\Pi}\bigl((\Hess F)_x\bigr)
=
(\Hess F)_x(e_1,e_1)
+
(\Hess F)_x(e_2,e_2)
$$
for any Euclidean orthonormal basis $(e_1,e_2)$ of $\Pi$. Thus every
quantity on the right-hand side of
\eqref{eq:general-conformal-curvature} is computed using $g_{\euc}$,
whereas $K_g(x,\Pi)$ is computed using $g$.

We recall the classical Fenchel--Borsuk theorem \cite{Fenchel,Borsuk}.
\begin{theorem}[Fenchel--Borsuk]
	\label{thm:fenchel-borsuk}
	Let $n\geq2$ and let
	$\gamma\colon\R/(\mathcal T\mathbb Z)\to\R^n$ be a $C^2$
	regular closed curve, where $\mathcal T>0$. Reparametrize
	$\gamma$ by Euclidean arclength $s$, and let
	$$
	\mathbf t=\frac{d\gamma}{ds}
	$$
	be its Euclidean unit tangent vector. Then the total curvature of
	$\gamma$ satisfies
	\begin{equation*}
		\operatorname{TC}(\gamma)
		\coloneqq
		\int_\gamma
		\left|
		\frac{d\mathbf t}{ds}
		\right|\,ds
		\geq2\pi.
	\end{equation*}
\end{theorem}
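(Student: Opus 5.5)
The plan is the classical argument via the tangent indicatrix. Since $\gamma$ is $C^2$ and regular, the Euclidean arclength reparametrization is $C^2$. Its unit tangent $\mathbf t\colon \R/(L\mathbb Z)\to\Sph^{n-1}$ is therefore a $C^1$ closed curve on the unit sphere, where $L$ is the Euclidean length of $\gamma$. Its spherical length is exactly $\int_\gamma |d\mathbf t/ds|\,ds=\operatorname{TC}(\gamma)$. So the theorem reduces to showing that this closed spherical curve has length at least $2\pi$. I argue by contradiction, assuming $\operatorname{TC}(\gamma)<2\pi$.

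\emph{Step 1: $\mathbf t$ is not contained in any open hemisphere.} For every unit vector $u\in\R^n$,
$$\int_0^L\langle \mathbf t(s),u\rangle\,ds=\langle \gamma(L)-\gamma(0),u\rangle=0,$$
because $\gamma$ is closed. Suppose $\langle\mathbf t,m\rangle>0$ everywhere for some unit $m$. The integrand is continuous and positive, so the integral is positive, which is a contradiction.

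\emph{Step 2: a closed curve on $\Sph^{n-1}$ of length $\ell<2\pi$ lies in an open hemisphere.} This is the main step. Pick two points $p,q$ on the curve that divide it into two arcs of equal length $\ell/2<\pi$. Then the spherical distance satisfies $d(p,q)\le \ell/2<\pi$, so $p,q$ are not antipodal. Let $m$ be the midpoint of the unique minimizing great-circle arc from $p$ to $q$.

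I claim each arc avoids the closed hemisphere $\{\langle x,m\rangle\le 0\}$. Suppose instead that some point $x$ on one arc has $\langle x,m\rangle\le0$. By continuity, moving along the arc from $p$ (which satisfies $\langle p,m\rangle>0$), we may assume $\langle x,m\rangle=0$.

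Let $R$ be the rotation by $\pi$ about the axis $\R m$, fixing the orthogonal complement of the plane spanned by $m$ and $p$, suitably chosen. This $R$ is an isometry of the sphere with $R(q)=p$. For $x$ on the great sphere orthogonal to $m$, the choice should give $R(x)=-x$. The cleanest choice is the isometry $x\mapsto 2\langle x,m\rangle m-x$, i.e.\ the point reflection through the axis $m$. It swaps $p$ and $q$ because $m$ is their midpoint, and it sends $x$ to $-x$ whenever $\langle x,m\rangle=0$.

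Then the length of the arc through $x$ is at least
$$d(p,x)+d(x,q)=d(p,x)+d(-x,p)\ge d(x,-x)=\pi,$$
which contradicts $\ell/2<\pi$. Hence both arcs lie in the open hemisphere centred at $m$.

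\emph{Conclusion.} Applying Step 2 to $\mathbf t$ under the assumption $\operatorname{TC}(\gamma)<2\pi$ yields a unit $m$ with $\langle\mathbf t,m\rangle>0$ everywhere, which contradicts Step 1. Hence $\operatorname{TC}(\gamma)\ge2\pi$.

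The only delicate point is the symmetry argument in Step 2. The isometry $x\mapsto 2\langle x,m\rangle m-x$ is used precisely because it swaps the endpoints and acts antipodally on the equator of $m$. The $C^1$-regularity of $\mathbf t$ makes its length well defined and equal to the total curvature.
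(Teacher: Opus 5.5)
Your proof is correct and complete. It is the classical tangent-indicatrix argument: $\int_0^L \mathbf t\,ds=0$ forbids $\mathbf t$ from lying in an open hemisphere, while a closed spherical curve of length $<2\pi$ must lie in one.

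The paper gives no proof of this statement. It cites Fenchel and Borsuk and uses the result once, in Lemma~\ref{lem:g-length-of-conformal-closed-geodesic}, for smooth closed $g$-geodesics. So the comparison is between a citation and a self-contained argument. Your route makes the paper independent of that reference. It also shows the hypothesis is stronger than needed: you only use that $\mathbf t$ is a closed rectifiable curve on $\Sph^{n-1}$ whose spherical length equals $\operatorname{TC}(\gamma)$. The argument therefore applies verbatim to $\mathcal{C}^{1,1}$ closed curves, not just $C^2$ ones.

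Two small presentation points.

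First, delete the sentence describing $R$ as a rotation by $\pi$ about $\R m$ fixing the orthogonal complement of $\operatorname{span}(m,p)$. Any map fixing that complement pointwise is the identity on $m^\perp\cap p^\perp$. For $n\geq3$ this is a nonzero subspace of $m^\perp$, so such a map cannot act as $-\mathrm{id}$ on $m^\perp$. The map you actually use, $x\mapsto 2\langle x,m\rangle m-x$, is the correct one. It is orthogonal, though a rotation only for odd $n$, which is irrelevant here.

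Second, state why $\langle p,m\rangle>0$, since the intermediate-value step depends on it. We have $\langle p,m\rangle=\cos\bigl(d(p,q)/2\bigr)$, and this is positive because $d(p,q)\leq \ell/2<\pi$.
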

Theorem~\ref{thm:fenchel-borsuk} is Fenchel's theorem in Borsuk's $n$-dimensional form; see \cite{Borsuk}.
We need the following consequence of
Theorem~\ref{thm:fenchel-borsuk}.

\begin{lemma}\label{lem:g-length-of-conformal-closed-geodesic}
Let $U\subset\R^n$ be open, let $F\in C^\infty(U)$, and consider $g=e^{2F}g_{\euc}$.
Assume that
\begin{equation*}
    |\nabla F|\leq e^F
    \quad\text{on }U.
\end{equation*}
Then every nonconstant smooth closed $g$-geodesic has $g$-length at
least $2\pi$.
\end{lemma}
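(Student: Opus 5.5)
Let $\gamma$ be a nonconstant smooth closed $g$-geodesic. By the remark after Klingenberg's lemma, it is a smooth regular closed curve in $\R^n$, so Theorem~\ref{thm:fenchel-borsuk} gives total Euclidean curvature at least $2\pi$. The plan is to bound the Euclidean curvature of $\gamma$ pointwise by $|\nabla F|$, then use $|\nabla F|\le e^F$ to compare with $g$-arclength.

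\emph{Geodesic equation in Euclidean terms.} Reparametrize $\gamma$ by Euclidean arclength $s$ and set $\mathbf t=d\gamma/ds$, so $|\mathbf t|=1$. The $g$-geodesic condition is $\nabla^g_{\mathbf t}\mathbf t=\lambda\mathbf t$ for some scalar $\lambda(s)$, because a reparametrization of a geodesic only adds a tangential term. Applying \eqref{eq:conformal-connection} with $X=Y=\mathbf t$ gives
\begin{equation*}
\nabla^g_{\mathbf t}\mathbf t=\frac{d\mathbf t}{ds}+2\,dF(\mathbf t)\,\mathbf t-\nabla F .
\end{equation*}
Since $|\mathbf t|=1$, the vector $d\mathbf t/ds$ is orthogonal to $\mathbf t$. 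Taking the component orthogonal to $\mathbf t$ of the geodesic equation therefore yields
\begin{equation*}
\frac{d\mathbf t}{ds}=\pi_{\mathbf t^\perp}(\nabla F),
\qquad\text{hence}\qquad
\Bigl|\frac{d\mathbf t}{ds}\Bigr|\le|\nabla F(\gamma(s))|\le e^{F(\gamma(s))}.
\end{equation*}

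\emph{Integration.} The $g$-arclength element along $\gamma$ is $e^{F}\,ds$. Combining Theorem~\ref{thm:fenchel-borsuk} with the pointwise bound,
\begin{equation*}
2\pi\le\operatorname{TC}(\gamma)=\int_\gamma\Bigl|\frac{d\mathbf t}{ds}\Bigr|\,ds\le\int_\gamma e^{F}\,ds=\ell_g(\gamma),
\end{equation*}
which is the claim.

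The main obstacle is keeping the parametrization straight, so that the tangential term drops out and leaves exactly the normal projection of $\nabla F$. Regularity is not an issue: $F$ is smooth, so $\gamma$ is $C^\infty$ with nonvanishing velocity, and Fenchel--Borsuk applies directly.
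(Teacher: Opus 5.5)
Your proposal is correct and follows the paper's proof essentially line for line. You reparametrize by Euclidean arclength, use \eqref{eq:conformal-connection} and take the component orthogonal to the tangent to get $\frac{d\mathbf t}{ds}=\pi_{\mathbf t^\perp}(\nabla F)$, and then integrate $|\nabla F|\le e^F$ against the Fenchel--Borsuk bound.
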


\begin{proof}
Let $\gamma$ be such a geodesic. Reparametrize it
by Euclidean arclength, and let $T=d\gamma/ds$ be its Euclidean unit
tangent. While the new parameter need not be affine for $g$, the normal component of $\nabla_T^g T$ vanishes by the chain rule. In particular, there is a scalar function $\lambda$ such that
$$
\nabla_T^gT=\lambda T.
$$
Since
$g$ is conformal to $g_{\euc}$, the $g$-orthogonal and Euclidean-orthogonal
complements of $T$ agree. Taking this orthogonal component from \eqref{eq:conformal-connection} and using the identities $|T|=1$ and $\langle T',T\rangle=0$ gives
\begin{equation*}
    \lambda T = T' + 2\langle\nabla F,T\rangle T - \nabla F
    \Rightarrow
    0 = T' - ( \nabla F - \langle \nabla F, T \rangle T ).
\end{equation*}
Consequently,
$$
|T'|\leq|\nabla F| \leq e^F.
$$
By Theorem~\ref{thm:fenchel-borsuk}, we have
$$
2\pi
\leq\int_\gamma|T'| \,ds_0
\leq\int_\gamma e^F\,ds_0
=\ell_g(\gamma).
$$
The claim follows.
\end{proof}

\section{Preliminaries on metric geometry}\label{section-preliminaries-metric-geometry}

\subsection{Length of curves and length distances}\label{subsec:metric-geometry}
Let $(X,d)$ be a metric space. A {\it curve} $\gamma$ is a continuous function $\gamma\colon[a,b]\rightarrow X$. Let $\mathcal{P}$ denote the set of all partitions $a = t_{0}<t_{1}<t_{2}< \cdots<t_{n}=b$ of the interval $[a,b]$. The \emph{length} of the curve $\gamma$ in the metric space $(X,d)$ is denoted by $\ell_{d}(\gamma)$, and is defined as
$$\ell_{d}(\gamma) = \sup_{\mathcal{P}} \sum_{k=0}^{n-1}d(\gamma(t_{k}), \gamma(t_{k+1})).$$
A curve in $(X,d)$ is \emph{rectifiable} if $\ell_{d}(\gamma) < \infty$. A metric space $(X,d)$ is \emph{rectifiably connected} if every pair of points $x,y \in X$ can be joined by a rectifiable curve. 

We say that $(X,d)$ is a \emph{length space}, if $d(x,y)=\inf_{\gamma}\{\ell_d(\gamma)\}$ for all $x,y\in X$, where the infimum is taken over all rectifiable curves in $X$ joining $x$ and $y$. A length space is \emph{geodesic} if for every $x, y \in X$, there exists a curve $\gamma$ joining $x$ to $y$ with $d(x,y) = \ell_{d}(\gamma)$. In a locally compact length space, being complete is equivalent to properness by Hopf--Rinow theorem~\cite[Theorem 2.5.28]{BuragoBuragoIvanov}; note that a proper length space is geodesic.

Let $\gamma\colon[a,b]\to X$ be a rectifiable curve. Its arclength function
$s\colon[a,b]\to[0,\ell_d(\gamma)]$ is defined by $s(t)\coloneqq\ell_d\bigl(\gamma|_{[a,t]}\bigr)$.
	The arc length function is continuous and nondecreasing, with
	$s(a)=0$ and $s(b)=\ell_d(\gamma)$. Moreover, there is a
	unique $1$-Lipschitz map $\gamma_s\colon[0,\ell_d(\gamma)]\to X$ such that
	$\gamma=\gamma_s\circ s_\gamma$. The map $\gamma_s$ is called the \emph{arclength parametrization} of $\gamma$. We say that a non-constant rectifiable curve $\gamma \colon [a,b] \to X$ has \emph{constant-speed} if $\gamma_s(t) = \gamma( a + t\frac{b-a}{\ell_{d}(\gamma)} )$ for each $0 \leq t \leq \ell_{d}(\gamma)$.

\subsection{Pointed convergence}
\label{subsec:pointed-GH}

We recall pointed Gromov--Hausdorff convergence in this subsection.
For a map $f\colon(X,d_X)\to(Y,d_Y)$, its distortion is
$$
\operatorname{dis}(f)
=
\sup_{x,x'\in X}
\left|
d_Y\bigl(f(x),f(x')\bigr)-d_X(x,x')
\right|.
$$
If $A\subset Y$ and $\varepsilon>0$, let
$$
N_\varepsilon^Y(A)
=
\{y\in Y\colon d_Y(y,A)<\varepsilon\}.
$$

We use the following formulation of pointed Gromov--Hausdorff
convergence for proper spaces; it is equivalent to
\cite[Definition~8.1.1]{BuragoBuragoIvanov}.

\begin{definition}[Pointed Gromov--Hausdorff convergence]
\label{def:pointed-GH}
Let $(X_j,d_j,o_j)$ and $(X,d,o)$ be pointed proper metric spaces. We
write
$$
(X_j,d_j,o_j)
\xrightarrow{\mathrm{pGH}}
(X,d,o)
$$
if, for every $R>0$ and every $\varepsilon\in(0,R)$, there is $j_0$
such that, for every $j\geq j_0$, there is a map $f_j:B_d(o,R)\to X_j$ such that $f_j(o)=o_j$, $\operatorname{dis}(f_j)<\varepsilon$, and
$$
B_{d_j}(o_j,R-\varepsilon)
\subset
N_\varepsilon^{X_j} \left(f_j\bigl(B_{d}(o,R)\bigr)\right).
$$
The maps $f_j$ are not required to be continuous.
\end{definition}

\subsection{Alexandrov geometry} For $\kappa\in\R$, let $\mathbb M_\kappa^2$ be the complete simply
connected two-dimensional Riemannian manifold of constant sectional curvature $\kappa$, and denote its Riemannian distance and diameter by $d_\kappa$ and $D_\kappa$, respectively. In fact, $D_\kappa$ coincides with the definition in \eqref{equation-d-kappa}.

A metric space is called \emph{$D_\kappa$-geodesic} if every pair of points
at distance less than $D_\kappa$ can be joined by a geodesic segment.

A \emph{geodesic triangle} $\Delta(x,y,z)$ is a union of three geodesics joining $x$ to $y$, $y$ to $z$, and $z$ to $x$. The \emph{perimeter} is the sum of these lengths.

Let $\Delta(x,y,z)$ be a geodesic triangle of perimeter less than
$2D_\kappa$. A \emph{comparison triangle}
$$
\overline\Delta(\bar x,\bar y,\bar z)
\subset\mathbb M_\kappa^2
$$
is a geodesic triangle together with a \emph{comparison map} $\bar u \colon \overline{\Delta} \to \Delta$ sending $\bar{x}$ to $x$, $\bar{y}$ to $y$, $\bar{z}$ to $z$, and the corresponding sides to sides isometrically.

\begin{definition}[Upper Alexandrov curvature bound]
\label{def:CAT-kappa}
A metric space $(X,d)$ is a \emph{$\CAT(\kappa)$ space}
if it is $D_\kappa$-geodesic and the comparison map is $1$-Lipschitz for every geodesic triangle $\Delta$ of perimeter less than $2D_\kappa$. 

A metric space $(X,d)$ is \emph{locally $\CAT(\kappa)$} if every point has an open neighborhood which, equipped with the restricted
metric, is a $\CAT(\kappa)$ space.
\end{definition}
We refer to local $\CAT(\kappa)$ also as \emph{Alexandrov curvature upper bound $\leq \kappa$}. For the definition of lower bounds, see e.g. \cite[Chapter 8]{AlexanderKapovitchPetrunin}.

Below we need a local-to-global property for the curvature upper bounds.

\begin{theorem}[{Patchwork globalization, \cite[Theorem~9.30]{AlexanderKapovitchPetrunin}}]\label{thm:patchwork-globalization}
Let $(X,d)$ be a complete length space. Then the following conditions
are equivalent:
\begin{enumerate}
\item $(X,d)$ is $\CAT(\kappa)$;
\item $(X,d)$ is locally $\CAT(\kappa)$, every pair of points at
distance less than $D_\kappa$ is joined by a unique geodesic, and
these geodesics depend continuously on their endpoints.
\end{enumerate}
\end{theorem}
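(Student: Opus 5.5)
The direction (1)$\Rightarrow$(2) is the routine one, and I would handle it first. A $\CAT(\kappa)$ space is trivially locally $\CAT(\kappa)$. For uniqueness, suppose two geodesics $\gamma_1,\gamma_2$ join $x$ to $y$ with $d(x,y)<D_\kappa$. Take the degenerate triangle formed by $\gamma_1$, $\gamma_2$ and the constant path at $y$. Its comparison triangle in $\mathbb M^2_\kappa$ is a segment, so the $1$-Lipschitz comparison map forces $\gamma_1(t)=\gamma_2(t)$ for all $t$. For continuous dependence, let $x_j\to x$ and $y_j\to y$. I would use two triangles: $\Delta(x,y,y_j)$ to compare $[xy]$ with $[xy_j]$, and then $\Delta(x,x_j,y_j)$ to compare $[xy_j]$ with $[x_jy_j]$. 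In $\mathbb M^2_\kappa$, corresponding points on two sides of a triangle with a short third side are uniformly close as long as the perimeter stays below $2D_\kappa$. Transferring this estimate through the comparison map gives uniform convergence of $[x_jy_j]$ to $[xy]$.

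For (2)$\Rightarrow$(1) I would follow the patchwork scheme. The first reduction is to replace triangle comparison with an equivalent statement about point-side distances. By Alexandrov's gluing lemma in $\mathbb M^2_\kappa$, it suffices to prove the following: for every $p$ and every geodesic $[xy]$ with perimeter of $\Delta(p,x,y)$ less than $2D_\kappa$, each $z\in[xy]$ satisfies $d(p,z)\leq \bar d(\bar p,\bar z)$. Fix $p$ and let $\gamma_x$ denote the unique geodesic from $p$ to $x$, defined for $x\in B(p,D_\kappa)$. By hypothesis $\gamma_x$ depends continuously on $x$.

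Next I would run an open–closed (continuity) argument in a parameter. Consider triangles $\Delta(p,x,y)$ with $[xy]$ short, meaning $[xy]$ lies in a small ball around each of its points in which the local $\CAT(\kappa)$ property holds. Call such a triangle good if the comparison inequality holds for it. The plan is to show that if all triangles $\Delta(p,\gamma_x(s),\gamma_y(s'))$ with smaller parameters are good, then so is $\Delta(p,x,y)$. To do this, subdivide along the geodesics $\gamma_x,\gamma_y$ and the continuous family of geodesics $\gamma_z$, $z\in[xy]$, which is continuous by hypothesis. This cuts the thin region between $\gamma_x$ and $\gamma_y$ into small quadrilaterals, and by compactness each of them lies in a single $\CAT(\kappa)$ neighbourhood. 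The small triangles are good by local $\CAT(\kappa)$, and Alexandrov's lemma glues the comparisons. Finally, a general triangle with a long side $[xy]$ is cut into short pieces $[x_ix_{i+1}]$ and handled by gluing the fan $\Delta(p,x_i,x_{i+1})$ with Alexandrov's lemma once more. Throughout I must check that the perimeter stays below $2D_\kappa$, which is what keeps the comparison triangles and the gluing lemma well defined.

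The main obstacle is the gluing step in the positive-curvature case $\kappa>0$. Alexandrov's lemma requires the glued comparison figure to remain convex, that is, the angle sum at the common vertex must be at least $\pi$, and the perimeters must stay under $2D_\kappa$. I would first aim to show that the set of parameters $s$ for which $\Delta(p,\gamma_x(s),\gamma_y(s))$ is good is nonempty, open and closed in $[0,1]$. Closedness follows from continuity of geodesics together with continuity of comparison data. Openness is where local $\CAT(\kappa)$ near the moving side $[\gamma_x(s)\gamma_y(s)]$ enters, and it needs a uniform size for the $\CAT(\kappa)$ neighbourhoods along a compact set, which a Lebesgue-number argument provides.
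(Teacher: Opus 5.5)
Your route is the standard patchwork argument behind the theorem the paper cites from Alexander--Kapovitch--Petrunin; the paper gives no proof of its own. You fix $p$, sweep the fan $z\mapsto\gamma_z$ over $[xy]$, use continuity and a Lebesgue number to cut it into pieces inside $\CAT(\kappa)$ neighbourhoods, and reassemble with Alexandrov's lemma. Your (1)$\Rightarrow$(2) is fine.

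\textbf{The gap.} It sits at the step you yourself call the main obstacle. With your definition of \emph{good} (point-on-side comparison from the vertex $p$ only), that step cannot be closed. The reduction to point-on-side comparison is a correct final target, but it is too weak as an inductive hypothesis, for two reasons.

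First, the strip induction never glues along cevians from $p$. To pass from $\Delta(p,a,b)$ with $a=\gamma_x(s)$, $b=\gamma_y(s)$ to $\Delta(p,a',b')$ with $a'=\gamma_x(s')$, $b'=\gamma_y(s')$, you split along $[b'a]$ and then along $[ab]$. These are cevians issuing from $b'$ and from $a$, so the inductive hypothesis is needed for comparisons from vertices other than $p$.

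Second, consider gluing $\Delta(p,x,z)$ and $\Delta(p,z,y)$ into $\Delta(p,x,y)$. The angle condition $\tilde\angle(z;p,x)+\tilde\angle(z;p,y)\ge\pi$ at the foot $z$ is, by Alexandrov's lemma itself, equivalent to $d(p,z)\le\bar d(\bar p,\bar z)$ in the comparison triangle of $\Delta(p,x,y)$. That is exactly what you are trying to prove, so comparisons from $p$ in the two halves cannot supply it. Relatedly, this issue is not specific to $\kappa>0$.

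\textbf{The fix.} The cited proof closes the argument by carrying full thinness: call a triangle good if its comparison map is $1$-Lipschitz.
\begin{enumerate}
\item Small triangles are good. An open $U$ that is $\CAT(\kappa)$ in the restricted metric contains a geodesic between any two of its points, and by the uniqueness hypothesis this is \emph{the} geodesic of $X$. You should state this explicitly, since it is where uniqueness is used beyond continuity.
\item The angle condition then follows. In a good triangle the Alexandrov angle at each vertex is at most the model angle. At an interior point $z$ of $[xy]$ the angle between the two halves of $[xy]$ is $\pi$. The triangle inequality for Alexandrov angles gives $\tilde\angle(z;p,x)+\tilde\angle(z;p,y)\ge\angle_z(p,x)+\angle_z(p,y)\ge\pi$, and the gluing lemma returns a good triangle.
\item The only $\kappa>0$ bookkeeping is the perimeter, and it is automatic. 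A cevian split does not increase perimeter. Every auxiliary triangle in the grid has perimeter at most $2\max_{z\in[xy]}d(p,z)$ plus a multiple of the mesh. Also $2\max_{z\in[xy]}d(p,z)$ is at most the perimeter of $\Delta(p,x,y)$, which is $<2D_\kappa$.
\end{enumerate}

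Finally, \emph{short} must be measured against the modulus of continuity of $(z,t)\mapsto\gamma_z(t)$, not against neighbourhoods of points of $[xy]$. Your current definition does not guarantee that the strip between $\gamma_x$ and $\gamma_y$ is uniformly thin. Once it is fixed, the open--closed argument is superfluous: a finite grid with a double induction suffices, first outwards along each thin strip, then across the fan.
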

\begin{remark}\label{rem:continuous-dependence}
    Here continuous dependence means that, after parametrizing the unique geodesics
by constant speed (i.e., proportionally to arclength) on $[0,1]$, the map $(x,y,t)\longmapsto\gamma_{x,y}(t)$ is continuous on
$$
\{(x,y)\in X\times X\colon d(x,y)<D_\kappa\}\times[0,1].
$$
\end{remark}

We use the following corollary of Theorem~\ref{thm:patchwork-globalization}.
\begin{corollary}[Globalization by short-geodesic uniqueness]\label{cor:globalization-by-uniqueness}
Let $(X,d)$ be a proper length space which is locally
$\CAT(\kappa)$. Suppose that every pair of points at distance less
than $D_\kappa$ is joined by a unique geodesic. Then $(X,d)$ is
$\CAT(\kappa)$.
\end{corollary}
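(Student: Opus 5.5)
By Theorem~\ref{thm:patchwork-globalization}, it suffices to show that the unique geodesics depend continuously on their endpoints, in the sense of Remark~\ref{rem:continuous-dependence}. The implication (1)$\Rightarrow$(2) is not needed. Local $\CAT(\kappa)$ and uniqueness are assumed. Completeness holds because a proper space is complete, and $(X,d)$ is a length space by hypothesis. So the only thing to verify is continuity of $(x,y,t)\mapsto\gamma_{x,y}(t)$ on $\{d(x,y)<D_\kappa\}\times[0,1]$.

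I would prove this by a compactness and uniqueness argument. Take a sequence $(x_j,y_j,t_j)\to(x,y,t)$ with $d(x,y)<D_\kappa$, so that $d(x_j,y_j)<D_\kappa$ for large $j$. Let $\gamma_j=\gamma_{x_j,y_j}$ be the constant-speed parametrizations on $[0,1]$. Each $\gamma_j$ is $d(x_j,y_j)$-Lipschitz, and these Lipschitz constants are uniformly bounded. Moreover, every $\gamma_j$ lies in a fixed closed ball around $x$, which is compact because $X$ is proper. By Arzelà--Ascoli, every subsequence has a further subsequence converging uniformly to some curve $\gamma\colon[0,1]\to X$.

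Next I would identify the limit. The curve $\gamma$ joins $x$ to $y$. By lower semicontinuity of length, $\ell_d(\gamma)\le\liminf_j d(x_j,y_j)=d(x,y)$. Hence $\gamma$ is a geodesic. It also has constant speed, since uniform limits of $L_j$-Lipschitz maps are $L$-Lipschitz with $L=\lim L_j=d(x,y)$ and the length equals $d(x,y)$. By the uniqueness hypothesis, $\gamma=\gamma_{x,y}$.

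Since every subsequence has a further subsequence converging to the same limit, the whole sequence $\gamma_j$ converges uniformly to $\gamma_{x,y}$. Equicontinuity then gives $\gamma_j(t_j)\to\gamma_{x,y}(t)$, which is the required continuity. The only real point to check is the one above: that the limit curve is a constant-speed geodesic, so that uniqueness identifies it. Everything else is routine.
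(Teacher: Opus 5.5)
Your proposal is correct and follows the same route as the paper: reduce to the patchwork globalization theorem (Theorem~\ref{thm:patchwork-globalization}) and get continuous dependence of the geodesics on their endpoints from properness, Arzel\`a--Ascoli, and the uniqueness hypothesis. You simply write out the subsequence argument that the paper's two-line proof leaves implicit, including the check that the limit curve is a constant-speed geodesic so that uniqueness identifies it.
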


\begin{proof}
A proper length space is complete and geodesic. The required continuous dependence of geodesics follows from properness and Arzelà--Ascoli theorem.
\end{proof}

We also recall that on (smooth) Riemannian manifolds, the Alexandrov curvature upper bounds are coincide with sectional curvature upper bounds.
\begin{theorem}[{\cite[Theorem~II.1A.6]{BridsonHaefliger}}]\label{thm:riemannian-local-CAT}
Let $(M,g)$ be a connected Riemannian manifold and let $d_g$
be its Riemannian distance. Then
$$
K_g(x,\Pi)\leq\kappa
$$
for every $x\in M$ and every two-plane $\Pi\subset T_xM$ if and only
if $(M,d_g)$ is locally $\CAT(\kappa)$.
\end{theorem}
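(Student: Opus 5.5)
The forward direction runs through Rauch comparison and hinge comparison; the converse compares second-order Taylor expansions of distances in small hinges.

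\emph{Sectional curvature bound implies local $\CAT(\kappa)$.} Assume $K_g\leq\kappa$ everywhere and fix $x\in M$. I would choose $r>0$ with $r<D_\kappa/4$ such that the closed ball $\overline{B}_{d_g}(x,r)$ is compact and strongly convex, and such that $r$ is smaller than the injectivity radius at every point of $B_{d_g}(x,2r)$. This is possible by continuity of $\injrad_g$ and Whitehead's convexity theorem. Put $U=B_{d_g}(x,r)$. In $U$, any two points are joined by a unique minimizing geodesic, which stays in $U$ and depends smoothly on its endpoints, so $U$ is $D_\kappa$-geodesic with its restricted metric.

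The key estimate is Rauch comparison, in the form underlying \eqref{eq:rauch-conjugate-radius}. Fix $p\in U$ and a linear isometry $I\colon T_pM\to T_{\bar p}\mathbb M^2_\kappa$, where for $n>2$ we use $\mathbb M^n_\kappa$ and note that hinges live in totally geodesic copies of $\mathbb M^2_\kappa$. Since $K_g\leq\kappa$, the Rauch comparison theorem gives $|(D\exp_p)_v w|\geq|(D\exp^{\kappa}_{\bar p})_{Iv}Iw|$ for $|v|<D_\kappa$. Hence the map $\exp^\kappa_{\bar p}\circ I\circ\exp_p^{-1}$ does not increase the length of curves in $U$.

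Now take $y,z\in U$ and join them by a curve inside $U$, namely the $g$-geodesic. Comparing lengths of this curve and of its image yields the hinge comparison
$$d_g(y,z)\geq d_\kappa(\bar y,\bar z),$$
where $\bar y,\bar z$ span a hinge at $\bar p$ with the same side lengths and the same angle $\angle_p(y,z)$. Applying this at each vertex shows that the Riemannian angles of any geodesic triangle in $U$ are at most the corresponding comparison angles. By the standard equivalence between the angle condition and the $\CAT(\kappa)$ condition (Alexandrov's lemma, as in Bridson--Haefliger II.1.7), $U$ is $\CAT(\kappa)$. Since uniqueness and continuity of geodesics are built into the choice of $U$, no globalization is needed at this scale.

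\emph{Local $\CAT(\kappa)$ implies the sectional curvature bound.} Suppose $(M,d_g)$ is locally $\CAT(\kappa)$, and fix $x\in M$, a two-plane $\Pi$, and orthonormal $u,v\in\Pi$ spanning an angle $\theta\in(0,\pi)$. For small $t$, the geodesics $\exp_x(su)$ and $\exp_x(sv)$ form a hinge inside a $\CAT(\kappa)$ neighborhood, and the Riemannian angle equals the Alexandrov angle. The $\CAT(\kappa)$ inequality gives hinge comparison in the direction $d_g(\exp_x tu,\exp_x tv)\geq$ the corresponding distance in $\mathbb M^2_\kappa$. I would then compare the classical expansions
$$d^2=2t^2(1-\cos\theta)-\tfrac{K}{3}t^4\sin^2\theta+o(t^4),$$
valid for $K=K_g(x,\Pi)$ in $M$ and for $K=\kappa$ in the model. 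The comparison forces $K_g(x,\Pi)\leq\kappa$.

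\emph{Main obstacle.} The main obstacle is the forward direction, specifically passing from the infinitesimal Rauch estimate to the triangle comparison. This requires checking carefully that the relevant geodesic and its image stay within the domain where $\exp_p^{-1}$ is a diffeomorphism and the model exponential is injective; this is why $r<D_\kappa/4$ and the injectivity radius bound on $B_{d_g}(x,2r)$ are imposed. It also requires invoking the Alexandrov-lemma equivalence between angle and distance comparison.
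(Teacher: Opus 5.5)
The paper gives no proof of this statement and only cites Bridson--Haefliger, and your argument is the standard proof behind that citation and is correct: Rauch comparison makes $\exp^\kappa_{\bar p}\circ I\circ\exp_p^{-1}$ length non-increasing, which gives hinge and angle comparison and then the angle criterion of Bridson--Haefliger, Proposition II.1.7; for the converse, the fourth-order expansion of $d_g(\exp_x tu,\exp_x tv)^2$ forces $K_g\leq\kappa$. One quantifier needs fixing: for $p\in U$ you have $U\subset B_{d_g}(p,2r)$, so you must choose $r$ with $2r$ (not $r$) below the injectivity radius at the points of $U$; otherwise $\exp_p^{-1}$, and hence your comparison map, is not defined on all of $U$.
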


Note that no completeness
assumption is needed for this local equivalence. Combining this with the patchwork globalization, we have the following.
\begin{corollary}[Riemannian globalization criterion]\label{cor:riemannian-globalization}
 Let $\kappa>0$ and let $(M,g)$ be a connected Riemannian
manifold. If
$$
K_g\leq\kappa
\quad\text{and}\quad
\injrad_g(M)\geq D_\kappa,
$$
then $(M,d_g)$ is $\CAT(\kappa)$.   
\end{corollary}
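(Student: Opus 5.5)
We need to prove Corollary (Riemannian globalization criterion): $K_g\le\kappa$ and $\injrad_g(M)\ge D_\kappa$ imply $(M,d_g)$ is $\CAT(\kappa)$. Completeness is not stated, which is a subtlety. The injectivity radius here is defined via Definition~\ref{def:riemannian-injectivity-radius}, which presupposes that $\exp_p$ is defined on the relevant ball. We will read the hypothesis as saying that for each $p$, $\exp_p$ is defined on $B_{T_pM}(0,D_\kappa)$ and is a diffeomorphism onto its image there.

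The plan is to reduce to Theorem~\ref{thm:patchwork-globalization}, or rather to its proof mechanism. By Theorem~\ref{thm:riemannian-local-CAT}, the curvature bound gives that $(M,d_g)$ is locally $\CAT(\kappa)$.

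Next we establish $D_\kappa$-geodesicity and uniqueness. Let $d_g(p,q)<D_\kappa$. Fix $r$ with $d_g(p,q)<r<D_\kappa$. The set $\exp_p(\overline B(0,r'))$ for $r'<D_\kappa$ is compact, being the image of a compact set. By the Gauss lemma, $\exp_p(B(0,r))$ is the metric ball $B_{d_g}(p,r)$: any curve leaving the normal ball must cross each geodesic sphere, so it has length at least the radius. Hence $q=\exp_p(v)$ with $|v|=d_g(p,q)$, and the radial geodesic is minimizing. For uniqueness, any minimizing geodesic from $p$ to $q$ of length $<D_\kappa$ is of the form $t\mapsto\exp_p(tw)$ with $|w|=d_g(p,q)$. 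Since $\exp_p$ is injective on $B(0,D_\kappa)$, we get $w=v$. Continuous dependence then follows because the unique geodesic is $t\mapsto\exp_p(t\exp_p^{-1}q)$, and $(p,q)\mapsto\exp_p^{-1}(q)$ is continuous on $\{d_g<D_\kappa\}$ by the inverse function theorem applied to $(p,v)\mapsto(p,\exp_pv)$.

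The main obstacle is that Theorem~\ref{thm:patchwork-globalization} requires completeness, while $M$ need not be complete. If $\injrad_g(M)=\infty$, the exponential maps are defined everywhere and Hopf--Rinow gives completeness. In general, the uniform radius $D_\kappa>0$ still forces completeness. Take a Cauchy sequence $x_j$ and choose $j_0$ with $d_g(x_j,x_{j_0})<D_\kappa/2$ for $j\ge j_0$. Then all $x_j$ lie in $\exp_{x_{j_0}}(\overline B(0,D_\kappa/2))$, which is compact, so the sequence converges. Thus $M$ is complete, Theorem~\ref{thm:patchwork-globalization} applies, and we conclude. Equivalently, once completeness is known, Corollary~\ref{cor:globalization-by-uniqueness} applies directly, since a complete, locally compact length space is proper.
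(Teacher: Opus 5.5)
Your proof is correct and follows the paper's route: local $\CAT(\kappa)$ from Theorem~\ref{thm:riemannian-local-CAT}, uniqueness of geodesics of length $<D_\kappa$ from the injectivity radius bound (the paper cites Lee, while you argue via the Gauss lemma), properness, and then Corollary~\ref{cor:globalization-by-uniqueness} or Theorem~\ref{thm:patchwork-globalization}. The one difference is completeness: the paper takes it from the standing assumption of Section~\ref{section-preliminaries-riemannian-geometry} and applies Hopf--Rinow, whereas you derive it from the uniform injectivity radius via compactness of $\exp_{x_{j_0}}(\overline B(0,D_\kappa/2))$, which is a valid small strengthening; your separate continuity argument is unnecessary once Corollary~\ref{cor:globalization-by-uniqueness} is used.
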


\begin{proof}
Theorem~\ref{thm:riemannian-local-CAT} gives the local
$\CAT(\kappa)$ property. The Hopf--Rinow theorem shows that $(M,d_g)$
is proper, and \cite[Proposition~6.11, pp.~161--162, and Problem~6-16(a), p.~188]{Lee} gives the
required uniqueness for pairs at distance less than $D_\kappa$.
Corollary~\ref{cor:globalization-by-uniqueness} completes the proof.
\end{proof}

When $\kappa \leq 0$, the famous Cartan--Hadamard theorem gives a stronger equivalence.
\begin{theorem}[{\cite[Chapter~II, Theorem~4.1]{BridsonHaefliger} }]\label{theorem-cartan-hadamard}
    Let $(X,d)$ be a complete length space and $\kappa \leq 0$. Then the following conditions
    are equivalent:
    \begin{enumerate}
        \item $(X,d)$ is $\CAT(\kappa)$;
        \item $(X,d)$ is locally $\CAT(\kappa)$ and simply connected.
\end{enumerate}
\end{theorem}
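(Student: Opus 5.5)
The plan is to treat the two directions separately. Essentially all of the work lies in (2)$\Rightarrow$(1), where we reduce to the patchwork globalization of Theorem~\ref{thm:patchwork-globalization}, using that $D_\kappa=\infty$ for $\kappa\leq 0$.

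For (1)$\Rightarrow$(2), the local $\CAT(\kappa)$ property is immediate from Definition~\ref{def:CAT-kappa}, since $X$ itself is a neighbourhood of every point. For simple connectivity we use the $\CAT(\kappa)$ comparison with $\kappa\leq 0$:
\begin{itemize}
\item comparing the two sides of a degenerate triangle shows that geodesics between any two points are unique;
\item the comparison inequality applied to the triangle with vertices $x,y,x'$ (and then $x',y,y'$) gives
$$
d\bigl(\gamma_{x,y}(t),\gamma_{x',y'}(t)\bigr)\leq (1-t)\,d(x,x')+t\,d(y,y'),
$$
so the constant-speed geodesics $\gamma_{x,y}$ on $[0,1]$ depend continuously on their endpoints;
\item fixing $o\in X$, the map $H(x,t)=\gamma_{o,x}(1-t)$ is then a continuous contraction of $X$ to $o$.
\end{itemize}
Hence $X$ is contractible, and in particular simply connected.

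For (2)$\Rightarrow$(1), fix a basepoint $o$. Let $\widetilde X_o$ be the set of local geodesics $c\colon[0,1]\to X$ with $c(0)=o$, parametrized proportionally to arclength and including the constant one. We equip it with the uniform metric, or better, with the length metric induced from the endpoint map $\exp\colon\widetilde X_o\to X$, $c\mapsto c(1)$. The argument has four steps.
\begin{enumerate}
\item \emph{Local geodesics in a locally $\CAT(\kappa)$ space with $\kappa\leq0$.} In a small $\CAT(\kappa)$ ball the distance function between two geodesics is convex. A subdivision argument then shows that local geodesics are determined by their endpoints among nearby local geodesics. It also shows that they depend continuously on their endpoints and persist under small perturbations of the endpoint. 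The perturbation is done by moving the endpoint slightly and re-solving successively on a fine chain of small convex balls covering the curve, in the style of Birkhoff's shortening.
\item \emph{$\exp$ is a local homeomorphism.} Step 1 shows that $\exp$ is a local homeomorphism. It is even a local isometry once $\widetilde X_o$ carries the induced length metric.
\item \emph{$\exp$ is a covering map.} By completeness of $X$, the space $\widetilde X_o$ is complete. A local isometry from a complete length space is a covering map, because paths in $X$ lift uniquely and completeness prevents the lifts from escaping.
\item \emph{Globalization.} The space $\widetilde X_o$ is contractible, since $c\mapsto c|_{[0,s]}$ reparametrized gives a contraction. Since $X$ is connected and simply connected, $\exp$ is a homeomorphism. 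Thus each point $x$ is joined to $o$ by exactly one local geodesic. A geodesic from $o$ to $x$ exists, because a complete, locally compact length space is proper and hence geodesic, and every geodesic is a local geodesic. So the geodesic from $o$ to $x$ is unique. Continuity of $x\mapsto\exp^{-1}(x)$ gives continuous dependence on the endpoints. As $o$ was arbitrary, Theorem~\ref{thm:patchwork-globalization} with $D_\kappa=\infty$ yields $\CAT(\kappa)$.
\end{enumerate}

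The main obstacle is Step 1 together with the completeness of $\widetilde X_o$ needed in Step 3. The convexity of the distance between local geodesics must be propagated along a whole curve from local $\CAT(\kappa)$ balls whose sizes may vary from point to point. I would handle this by compactness of the image of each curve: a Lebesgue number gives a uniform ball size along the curve, and the convexity estimate is then iterated on consecutive overlapping small balls. If local compactness is unavailable, the geodesic existence in Step 4 must instead be obtained from completeness via midpoint arguments in $\widetilde X_o$.
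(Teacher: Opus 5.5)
The paper does not prove this statement; it cites Bridson--Haefliger. Your outline is essentially the proof given there: local geodesics from a base point, $\exp$ a covering, simple connectivity making it a homeomorphism, then Theorem~\ref{thm:patchwork-globalization} with $D_\kappa=\infty$. Your direction (1)$\Rightarrow$(2) is fine.

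\textbf{The gap is in Step~4.}

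\emph{Existence of geodesics.} The theorem is stated for an arbitrary complete length space. Locally $\CAT(\kappa)$ spaces need not be locally compact (infinite-dimensional Hilbert space, metric trees of infinite valence). So Hopf--Rinow is not available to produce a geodesic from $o$ to $x$, and ``midpoint arguments in $\widetilde X_o$'' does not replace it.

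\emph{Joint continuity.} Continuity of $x\mapsto\exp^{-1}(x)$ for fixed $o$ only gives continuity of $\gamma_{o,x}$ in one endpoint. Theorem~\ref{thm:patchwork-globalization} needs joint continuity in $(x,y)$. Without properness you cannot get this from Arzel\`a--Ascoli as in Corollary~\ref{cor:globalization-by-uniqueness}.

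\textbf{How to close both gaps.} Globalize the estimate you already have from Step~1. Write $c_x=\exp^{-1}(x)$.
\begin{enumerate}
\item For $z$ near $x$, $c_z$ is uniformly close to $c_x$. So $t\mapsto d(c_x(t),c_z(t))$ is convex on $[0,1]$ and vanishes at $t=0$, which gives $d(c_x(t),c_z(t))\leq t\,d(x,z)$.
\item Since $X$ is a length space, chaining this along a near-minimizing path gives
\[
d\bigl(c_x(t),c_y(t)\bigr)\leq t\,d(x,y)\quad\text{for all }x,y\in X,\ t\in[0,1].
\]
\item Take $y=o$, so $c_o\equiv o$. Take $t$ small enough that $c_x([0,t])$ lies in a $\CAT(\kappa)$ neighbourhood of $o$; there local geodesics are geodesics since $\kappa\leq 0$. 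The estimate then reads $t\,\ell_d(c_x)=d(o,c_x(t))\leq t\,d(o,x)$. Hence $c_x$ is minimizing, which gives existence and uniqueness of geodesics with no compactness assumption.
\item Apply the estimate with base point $x$, and with base point $y'$ for the reversed geodesic. This gives $d(\gamma_{x,y}(t),\gamma_{x',y'}(t))\leq(1-t)\,d(x,x')+t\,d(y,y')$, which is the joint continuity.
\end{enumerate}

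\textbf{A smaller point in Step~3.} The covering property should be derived from the geodesic contractibility of small balls in $X$, which the local $\CAT(\kappa)$ hypothesis supplies. Path lifting and completeness alone do not make a local isometry a covering over a general length space.

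In the proper setting where the paper actually uses the theorem, your Step~4 combined with Corollary~\ref{cor:globalization-by-uniqueness} already suffices.
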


We next recall the stability of the $\CAT(\kappa)$ property under pointed Gromov-Hausdorff convergence.
\begin{theorem}[{\cite[Chapter~II, Corollary~3.10(1)]{BridsonHaefliger}}]\label{thm:CAT-stability}
Let $\kappa \in \R$ and let $(X_j,d_j,o_j)$ be pointed $\CAT(\kappa)$ spaces.
If
$$
(X_j,d_j,o_j)
\xrightarrow{\mathrm{pGH}}
(X,d,o),
$$
then $(X,d)$ is $\CAT(\kappa)$.
\end{theorem}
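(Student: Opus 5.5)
The plan is to reduce the $\CAT(\kappa)$ property to conditions that involve only finitely many points and their mutual distances, since such conditions pass through the almost-isometries $f_j$ of Definition~\ref{def:pointed-GH}. The standard tool is the characterization in \cite[Chapter~II, Proposition~1.11]{BridsonHaefliger}. A complete metric space $(X,d)$ is $\CAT(\kappa)$ if and only if two conditions hold. The first is that it has \emph{approximate midpoints} for pairs at distance $<D_\kappa$: for $d(x,y)<D_\kappa$ and $\eta>0$ there is $m$ with $\max\{d(x,m),d(y,m)\}\le \tfrac12 d(x,y)+\eta$. The second is that it satisfies the \emph{$\CAT(\kappa)$ four-point condition}: every quadruple $(x_1,y_1,x_2,y_2)$ with $d(x_1,y_1)+d(y_1,x_2)+d(x_2,y_2)+d(y_2,x_1)<2D_\kappa$ admits a subembedding into $\mathbb M_\kappa^2$. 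I will take this characterization as given. The limit $(X,d)$ is proper by assumption, hence complete, so it suffices to verify the two conditions in $X$.

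For the approximate midpoints, fix $x,y\in X$ with $d(x,y)<D_\kappa$ and $\eta>0$. Choose $R$ larger than $d(o,x)+d(o,y)+D_\kappa$, and choose $\varepsilon\ll\eta$, also small compared with $D_\kappa-d(x,y)$. Then take $j$ large and the map $f_j$ from Definition~\ref{def:pointed-GH}. Since $d_j(f_j x,f_j y)<d(x,y)+\varepsilon<D_\kappa$ and $X_j$ is $\CAT(\kappa)$, there is a geodesic midpoint $m_j$ of $f_jx$ and $f_jy$ in $X_j$. This point lies in $B_{d_j}(o_j,R-\varepsilon)$, so there is $m\in B_d(o,R)$ with $d_j(f_j m,m_j)<\varepsilon$. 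The distortion bound then gives $d(x,m),d(y,m)\le \tfrac12 d(x,y)+3\varepsilon$, which is at most $\tfrac12 d(x,y)+\eta$ for $\varepsilon$ small enough.

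For the four-point condition, take a quadruple in $X$ with perimeter $<2D_\kappa$. For small $\varepsilon$, the images under $f_j$ still have perimeter $<2D_\kappa$, so they admit subembeddings $\bar x_1^j,\bar y_1^j,\bar x_2^j,\bar y_2^j\in\mathbb M_\kappa^2$. Each of the six distances in such a subembedding differs from the corresponding distance in $X$ by at most $\varepsilon_j\to0$, up to the comparison inequalities. After an isometry of $\mathbb M_\kappa^2$, these points lie in a fixed compact set. Passing to a convergent subsequence, the limiting configuration satisfies the defining equalities and the non-strict comparison inequalities, since both are closed conditions. This limiting configuration is the required subembedding of the original quadruple.

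The main obstacle is this last step in the case $\kappa>0$. The notion of subembedding there is defined only under the strict perimeter bound, and one must check that the subembeddings do not degenerate in the limit. For example, the two comparison triangles obtained by splitting along a diagonal must keep perimeter $<2D_\kappa$, so that the comparison is still meaningful. The approach is to use the strict slack $2D_\kappa-\text{perimeter}>0$ in $X$, which gives a uniform margin for all large $j$ and so prevents this degeneration. I would also check that the version of the four-point condition used in \cite{BridsonHaefliger} is exactly the closed form above. If it is not, I would fall back on the equivalent formulation by Bruhat--Tits-type midpoint inequalities for $\kappa$-comparison. These are again inequalities among finitely many distances and pass to limits in the same way.
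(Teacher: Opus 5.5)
Your argument is correct. The paper gives no proof of its own and only cites Bridson--Haefliger. There the statement is deduced from the corresponding result for ultralimits of $\CAT(\kappa)$ spaces, and that result rests on the same characterization (complete space, four-point condition, approximate midpoints; Proposition II.1.11) that you invoke.

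The difference is in packaging. You skip ultralimits and push the two finitary conditions directly through the almost-isometries $f_j$ of Definition~\ref{def:pointed-GH}. This is more elementary and fully sufficient for proper limits, which is the only case the paper uses. The ultralimit route gives, at no extra cost, non-proper limits, asymptotic cones, and sequences with varying bounds $\kappa_j\to\kappa$.

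Two small remarks.

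First, the obstacle you flag for $\kappa>0$ is not real. A subembedding is defined purely by the four equalities $d(\bar x_i,\bar y_k)=d(x_i,y_k)$ and the two non-strict inequalities on the diagonals. No comparison triangles are involved, so a subsequential limit of the subembeddings in $\mathbb M^2_\kappa$ is automatically a subembedding of the original quadruple. For $\kappa>0$ the model plane is compact, so even the normalization by an isometry is unnecessary. The strict perimeter slack is used only once: to guarantee that the image quadruple in $X_j$ has perimeter $<2D_\kappa$, so that the four-point condition of $X_j$ applies.

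Second, for $\kappa\le 0$ you have $D_\kappa=\infty$, so the choice $R>d(o,x)+d(o,y)+D_\kappa$ makes no sense. Take instead, e.g., $R>d(o,x)+d(x,y)+1$ and $\varepsilon<2/5$. What you actually need is $m_j\in B_{d_j}(o_j,R-\varepsilon)$. Since $f_j(o)=o_j$, this follows from $d_j(o_j,m_j)<d(o,x)+\tfrac12 d(x,y)+\tfrac32\varepsilon$. Your fallback to Bruhat--Tits-type inequalities is likewise unnecessary, because the closed form of the four-point condition you wrote down is exactly the one in the reference.
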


\subsection{Basic estimates on the quasihyperbolic metric}\label{subsection-basic-estimates-qh-metric}
Let $\Omega \subsetneq \R^n$ be a domain. The quasihyperbolic metric is defined by
\begin{align*}
    k_{\Omega}(x,y) = \inf\{ \ell_{k_\Omega}(\gamma) \colon \text{rectifiable $\gamma\colon x\curvearrowright y$ with $\gamma \subset \Omega$} \}.
\end{align*}
We recall the following basic estimates for the quasihyperbolic metric established by Gehring and Palka \cite[Lemma~2.1]{Gehring-Palka-1976} in $\mathbb{R}^n$. For all $x,y\in\Omega$, and any rectifiable curve $\gamma$ in $\Omega$ joining $x$ and $y$,
	\begin{align}\label{eq:qh-estimate-1}
		k_{\Omega}(x, y) &\ge \log\left(1 + \frac{|x - y|}{\min\{\delta_{\Omega}(x), \delta_{\Omega}(y)\}}\right) 
		\ge \left| \log \frac{\delta_{\Omega}(y)}{\delta_{\Omega}(x)} \right|;
	\end{align}
here $\delta_{\Omega}(x) = \dist(\R^n \setminus \Omega,x)$ is the distance from $x$ to the complement of $\Omega$.
The first inequality above is a special case of the more general inequality

\begin{equation}\label{eq:qh-estimate-2}
     \ell_{k_{\Omega}}(\gamma)
    \geq
    \log\left(
        1 + \frac{ \ell( \gamma ) }{ \dist(|\gamma|, \partial \Omega ) }
    \right).
\end{equation}

We recall a well-known approximation result. We present a proof in order to be self-contained. Suppose $\Omega \subset \mathbb{R}^n$, $n\ge 2$, is a domain. Let $K = \mathbb{R}^n \setminus \Omega$. Let $P_1 \subset \dots \subset P_{i} \subset P_{i+1} \subset \dots \subset K$ be finite sets such that $\bigcup P_i$ is dense in $K$. Let also $\Omega_i \coloneqq \mathbb{R}^n \setminus P_{i}$. Notice that $\Omega\subset \Omega_{i+1}\subset \Omega_i$. 

\begin{lemma}\label{lemma-monotonicity}
For every $i\ge 1$, $\delta_{\Omega_i}\ge \delta_{\Omega_{i+1}}\ge \delta_{\Omega}$ on $\mathbb{R}^n$, and $\delta_{\Omega_i}\to \delta_{\Omega}$ locally uniformly on $\mathbb{R}^n$. Consequently, $\frac{ 1 }{ \delta_{\Omega_i} } \leq \frac{1}{\delta_{\Omega}}$ pointwise in $\Omega$. Moreover, $\frac{ 1 }{ \delta_{\Omega_i} } \to \frac{ 1 }{ \delta_{\Omega} }$ uniformly in compact subsets of $\Omega$.
\end{lemma}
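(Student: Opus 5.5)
Monotonicity comes straight from the definitions. Since $P_i \subset P_{i+1} \subset K$, the infimum defining $\delta_{\Omega_i}(x)=\dist(x,P_i)$ is taken over a smaller set than the one defining $\delta_{\Omega_{i+1}}(x)=\dist(x,P_{i+1})$. That set is in turn smaller than $K$, which defines $\delta_\Omega(x)=\dist(x,K)$. Hence
\[
\delta_{\Omega_i}\ge\delta_{\Omega_{i+1}}\ge\delta_\Omega
\quad\text{on all of }\R^n.
\]
On $\Omega$ we have $\delta_\Omega>0$, so taking reciprocals gives $1/\delta_{\Omega_i}\le 1/\delta_\Omega$ pointwise in $\Omega$.

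For local uniform convergence, fix a compact set $C\subset\R^n$ and $\varepsilon>0$. I will first prove pointwise convergence. Given $x$, pick $p\in K$ with $|x-p|<\delta_\Omega(x)+\varepsilon$; such a $p$ exists since $K$ is nonempty and closed. Density of $\bigcup_i P_i$ in $K$ gives some $q\in P_{i_0}$ with $|p-q|<\varepsilon$. Then $\delta_{\Omega_i}(x)\le\delta_\Omega(x)+2\varepsilon$ for all $i\ge i_0$, by nestedness of the $P_i$. So $\delta_{\Omega_i}\downarrow\delta_\Omega$ pointwise.

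Each $\delta_{\Omega_i}$ and $\delta_\Omega$ is $1$-Lipschitz, being a distance function to a nonempty set; $P_1$ is assumed nonempty. Upgrading to uniform convergence on $C$ is therefore a standard Dini or equicontinuity argument. Concretely, I will cover $C$ by finitely many $\varepsilon$-balls centred at points $x_1,\dots,x_m$ and choose $i_0$ working for every $x_k$. The Lipschitz bound then gives $0\le\delta_{\Omega_i}-\delta_\Omega\le 4\varepsilon$ on $C$ for $i\ge i_0$. Alternatively, I can invoke Dini's theorem directly, since the sequence is monotone and all functions are continuous.

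Finally, let $C\subset\Omega$ be compact. Then $m\coloneqq\min_C\delta_\Omega>0$, and $\delta_{\Omega_i}\ge\delta_\Omega\ge m$ on $C$. Hence
\[
\left|\frac1{\delta_{\Omega_i}}-\frac1{\delta_\Omega}\right|
=\frac{\delta_{\Omega_i}-\delta_\Omega}{\delta_{\Omega_i}\,\delta_\Omega}
\le \frac{\sup_C(\delta_{\Omega_i}-\delta_\Omega)}{m^2}\to0.
\]
There is no real obstacle in this argument. The only point needing care is the passage from pointwise to uniform convergence, which the $1$-Lipschitz property (or Dini's theorem) handles.
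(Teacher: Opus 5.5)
Your proof is correct and follows essentially the same route as the paper: monotonicity from the nested inclusions $P_i\subset P_{i+1}\subset K$, pointwise convergence from the density of $\bigcup_i P_i$ in $K$, and Dini's theorem for local uniform convergence. The differences are cosmetic: you also give a direct finite-cover argument using the uniform $1$-Lipschitz bound, and you get uniform convergence of $1/\delta_{\Omega_i}$ on compact $C\subset\Omega$ from the explicit estimate with $m=\min_C\delta_\Omega>0$, rather than from a second application of Dini's theorem.
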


\begin{proof}
The inclusion $P_i\subset P_{i+1}\subset K$ gives $\delta_{\Omega_i}\ge \delta_{\Omega_{i+1}}\ge \delta_{\Omega}$. Furthermore, by the definition of infimum and since $\bigcup P_i$ is dense in $K$, for every $x\in \mathbb{R}^n$, we have $\lim_{i\to \infty} \delta_{\Omega_i}(x)=\delta_{\Omega}(x)$. Since all the functions involved are continuous, Dini's theorem gives uniform convergence on every compact subset of $\mathbb{R}^n$. Note that Dini's theorem applies in compact subsets of $\Omega$ both to $( \delta_{\Omega_i} )_i$ and to $( 1/\delta_{\Omega_i} )_i.$
\end{proof}

\begin{proposition}\label{proposition-convergence-of-approximations}
If $\Omega$ and $( \Omega_i )_i$ are as above and $o \in \Omega$, then 
$$( \Omega_i, k_{\Omega_i}, o) \xrightarrow{\mathrm{pGH}} ( \Omega, k_{\Omega}, o ).$$
\end{proposition}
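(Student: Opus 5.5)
The plan is to show that $k_{\Omega_i}\to k_\Omega$ uniformly on compact subsets of $\Omega\times\Omega$. Given this, the maps $f_i$ in Definition~\ref{def:pointed-GH} are simply the inclusions $B_{k_\Omega}(o,R)\hookrightarrow\Omega_i$, and it remains to check the covering condition.

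\textbf{Step 1 (monotonicity).} By Lemma~\ref{lemma-monotonicity} we have $1/\delta_{\Omega_i}\le 1/\delta_{\Omega_{i+1}}\le 1/\delta_\Omega$ on $\Omega$. Every curve in $\Omega$ is a curve in $\Omega_i$, so
\[
k_{\Omega_i}\le k_{\Omega_{i+1}}\le k_\Omega \quad\text{on } \Omega\times\Omega.
\]

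\textbf{Step 2 (upper convergence on compacts).} Fix $R>0$ and let $K=\overline B_{k_\Omega}(o,3R)$. This set is compact in $\Omega$ because $(\Omega,k_\Omega)$ is proper. For $x,y\in B_{k_\Omega}(o,R)$ we want $k_{\Omega_i}(x,y)\ge k_\Omega(x,y)-\varepsilon$ for large $i$.

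Take a $k_{\Omega_i}$-geodesic $\gamma_i$ from $x$ to $y$ in $\Omega_i$. The difficulty is that $\gamma_i$ may leave $\Omega$, passing through points of $K$ not yet in $P_i$. To control this we use \eqref{eq:qh-estimate-1} for $\Omega_i$. Since $\ell_{k_{\Omega_i}}(\gamma_i)\le k_\Omega(x,y)<2R$, the curve satisfies $\delta_{\Omega_i}\ge e^{-2R}\delta_{\Omega_i}(x)\ge e^{-2R}\delta_\Omega(x)$ along $\gamma_i$. Moreover $|\gamma_i - x|\le \delta_{\Omega_i}(x)(e^{2R}-1)$, and $\delta_{\Omega_i}(x)\to\delta_\Omega(x)$. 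Hence $\gamma_i$ stays in a fixed Euclidean compact set $L$ and keeps a fixed distance $c>0$ from $P_i$.

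We then need $\dist(\gamma_i,\R^n\setminus\Omega)$ bounded below as well. Suppose $z\in\gamma_i$ had $\delta_\Omega(z)<c/2$. By density of $\bigcup P_i$ in $K$ and compactness of $(\R^n\setminus\Omega)\cap \overline{N}_c(L)$, for large $i$ every point of that set lies within $c/2$ of $P_i$, which would give $\delta_{\Omega_i}(z)<c$, a contradiction. (This is a uniform version of Lemma~\ref{lemma-monotonicity}.) So for large $i$ all $\gamma_i$ lie in the compact set $L'=\{z\in L:\delta_\Omega(z)\ge c/2\}\subset\Omega$.

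On $L'$ we have $1/\delta_{\Omega_i}\to1/\delta_\Omega$ uniformly, with ratio $\delta_\Omega/\delta_{\Omega_i}\ge 1-\eta_i$ and $\eta_i\to0$. Therefore
\[
k_\Omega(x,y)\le \ell_{k_\Omega}(\gamma_i)\le (1-\eta_i)^{-1}\ell_{k_{\Omega_i}}(\gamma_i)=(1-\eta_i)^{-1}k_{\Omega_i}(x,y),
\]
which gives uniform convergence on $B_{k_\Omega}(o,R)\times B_{k_\Omega}(o,R)$. This step is the main obstacle: the geodesics of the approximating domains must be confined to a fixed compact subset of $\Omega$.

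\textbf{Step 3 (covering).} Let $y\in B_{k_{\Omega_i}}(o,R-\varepsilon)$. The same argument applied to a $k_{\Omega_i}$-geodesic from $o$ to $y$ shows, for large $i$, that this geodesic lies in $L'\subset\Omega$. Its $k_\Omega$-length is at most $(1-\eta_i)^{-1}R<$ roughly $R+\varepsilon$. So $y\in\Omega$ with $k_\Omega(o,y)\le(1-\eta_i)^{-1}(R-\varepsilon)<R$ for large $i$, hence $y$ lies in $f_i(B_{k_\Omega}(o,R))$ itself.

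Combining the distortion bound from Step 2 with this covering property verifies Definition~\ref{def:pointed-GH}.
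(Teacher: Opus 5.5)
Your proof is correct, but it takes a different route from the paper's.

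\textbf{The paper's route (compactness).} It parametrizes $k_{\Omega_i}$-geodesics from $o$ by Euclidean constant speed and extracts a limit curve by Arzel\`a--Ascoli. Comparing lengths along that limit curve, it shows that accumulation points of $\overline{B}_{k_{\Omega_i}}(o,r)$ lie in $\overline{B}_{k_\Omega}(o,r)$. This gives Hausdorff convergence of the balls and pointwise convergence $k_{\Omega_i}(x,y)\to k_\Omega(x,y)$. A further subsequence argument, using the triangle inequality and the $1$-Lipschitz property of the identity, then upgrades this to $\operatorname{dis}(f_i)\to 0$.

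\textbf{Your route (confinement and density comparison).} You confine all approximating geodesics, uniformly in their endpoints, to one fixed compact set $L'\subset\Omega$. You then compare the densities on $L'$ to get the explicit two-sided bound $(1-\eta_i)k_\Omega\le k_{\Omega_i}\le k_\Omega$ on $B_{k_\Omega}(o,R)\times B_{k_\Omega}(o,R)$.

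\textbf{What your route buys.} You get a quantitative estimate $\operatorname{dis}(f_i)\le 2R\eta_i$ with no subsequences and no semicontinuity of length. You also get a stronger covering statement, $B_{k_{\Omega_i}}(o,R-\varepsilon)\subset B_{k_\Omega}(o,R)$ for large $i$. Finally, you make explicit a point the paper uses only implicitly. For the limit curve to lie in $\Omega$, the approximating geodesics must stay a definite distance from $\R^n\setminus\Omega$. That requires the locally uniform convergence $\delta_{\Omega_i}\to\delta_\Omega$ on compact subsets of $\R^n$, not merely of $\Omega$, and your middle paragraph of Step 2 supplies exactly this.

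\textbf{What the paper's route buys.} It is softer and follows a standard stability template that would transfer to settings without an explicit pointwise comparison of densities.

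\textbf{Minor points.} Your \emph{uniform version} of Lemma~\ref{lemma-monotonicity} is just the locally uniform convergence on $\R^n$ already stated there. You use $K$ both for $\overline{B}_{k_\Omega}(o,3R)$, which is never actually used, and for $\R^n\setminus\Omega$ in the density step. The uniformity of $c$ in $x$ relies on $\delta_\Omega(x)\ge e^{-R}\delta_\Omega(o)$ on the ball, which follows from \eqref{eq:qh-estimate-1} and is worth stating.
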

Proposition~\ref{proposition-convergence-of-approximations} gives the first approximation of a quasihyperbolic domain we use in this manuscript. The next section concerns another level of approximation.
\begin{proof}
We use the following consequence of \eqref{eq:qh-estimate-2},
\begin{align}\label{equation-curve-distance-estimates-quantitative}
    ( e^{ \ell_{k_\Omega}(\gamma) } - 1 ) \dist( |\gamma|, \partial \Omega) \geq \ell(\gamma) 
\end{align}
during the proof.

Suppose $o \in \Omega$ and $r > 0$. The identity map $f_i \colon \overline{B}_{k_{\Omega}}(o,r) \to \overline{B}_{k_{\Omega_i}}(o,r)$ is well-defined and $\operatorname{dis}(f_i) \rightarrow 0$ as $i \to \infty$. Indeed, by Lemma \ref{lemma-monotonicity}, we obtain that each $f_i$ is $1$-Lipschitz so
\begin{align}\label{equation-closed-balls-inclusion}
    \overline{B}_{k_\Omega}(o,r) \subset \overline{B}_{k_{\Omega_i}}(o,r).
\end{align}
Next, we claim that the Hausdorff distance between these balls converge to zero. To this end, let $x_i \in \overline{B}_{k_{\Omega_i}}(o,r)$. Consider a quasihyperbolic geodesic $\gamma_i$ joining $x_i \in \overline{B}_{k_{\Omega_i}}(o,r)$ to $o$. Then it holds that
\begin{align}\label{equation-curve-distance-estimates}
    ( e^{r} - 1 )\delta_{\Omega_i}( o ) 
    &\geq ( e^{r}-1 ) \dist( |\gamma_i|, \partial \Omega_{i}) 
    \\ \notag
    &\geq ( e^{k_{\Omega_i}(o,x_i)}-1 )  \dist( |\gamma_i|, \partial \Omega_{i})  \geq \ell( \gamma_i )
\end{align}
by the above. If we parametrize $\gamma_i \colon [0,1] \to \Omega_i$ with constant-speed with respect to the Euclidean length, we observe that the sequence is uniformly bounded and equicontinuous. Indeed, here $\lim_{i\to\infty} \delta_{\Omega_i}(o) = \delta_{\Omega}(o) > 0$ so \eqref{equation-curve-distance-estimates} and Arzelà--Ascoli theorem establish the claim. Therefore, any subsequential limit of $( \gamma_i )_i$ is a Lipschitz curve $\gamma \colon [0,1] \to \Omega$. Then \eqref{equation-curve-distance-estimates-quantitative}, \eqref{equation-curve-distance-estimates}, and Lemma~\ref{lemma-monotonicity} imply that
\begin{align*}
    k_{\Omega}(o, \gamma(1))\leq \ell_{k_\Omega}(\gamma) = \lim_{ k \to \infty } k_{\Omega_{i_k}}(o,x_{i_k}) \leq r.
\end{align*}
Consequently, any accumulation point of $( x_i )_i$ is contained in $\overline{B}_{k_\Omega}(o,r)$. The claimed Hausdorff convergence follows from the arbitrariness of $( x_i )_i$.

We prove next that 
\begin{align}\label{eq-distortion-convergence}
    \lim_{ i \to \infty } \operatorname{dis}(f_i) = 0.
\end{align}
As an initial step, we prove for every pair $x, y \in \overline{B}_{k_{\Omega}}(o,r)$ that
\begin{align*}
    \lim_{i \to \infty} \left| k_{\Omega_i}(x,y) - k_{\Omega}(x,y) \right| = 0.
\end{align*}
Let $r = k_{\Omega}(x,y)$ and $r_i = k_{\Omega_i}(x,y)$, noting $r_i \leq r$ by the $1$-Lipschitz property of $f_i$. By the Hausdorff convergence above, $y \in \overline{B}_{k_{\Omega}}( x, \liminf_{i} r_i )$. Therefore $r\leq \liminf_{i} r_i \leq \limsup_{i} r_i \leq r$. Hence $r = \lim_{i} r_i$ shows the claim.

Finally, let $x_i,y_i \in \overline{B}_{k_\Omega}(o,r)$ satisfy
\begin{align*}
    \operatorname{dis}(f_i) = k_{\Omega}(x_i,y_i) - k_{\Omega_i}(x_i,y_i).
\end{align*}
We pass to a subsequence for which $\operatorname{dis}(f_{i_k}) \to \limsup_{i} \operatorname{dis}(f_i)$. We may also suppose that $(x,y) = \lim_{k\to \infty}(x_{i_k},y_{i_k})$. By triangle inequality and the $1$-Lipschitz property of $f_i$, we conclude that
\begin{align*}
    \lim_{k\to\infty}  k_{\Omega}(x_{i_k},y_{i_k}) - k_{\Omega_{i_k}}(x_{i_k},y_{i_k}) 
    = 
    \lim_{k\to\infty}  k_{\Omega}(x,y) - k_{\Omega_{i_k}}(x,y) 
    = 0.
\end{align*}
The claim follows. The convergence in Hausdorff distance and \eqref{eq-distortion-convergence} imply the required pointed Gromov--Hausdorff convergence by the arbitrariness of $r >0$.
\end{proof}

\section{Quasihyperbolic metric in finitely connected domains}\label{section-qh-metric-in-finite-punctures}
    The goal of this section is to prove the following theorem.

   \begin{theorem}\label{thm:finite-punctures-CAT-2}
   If $\Omega = \R^n \setminus P$ for a finite set $P \neq \emptyset$, the quasihyperbolic domain $(\Omega,k_\Omega)$ is $\CAT(2)$ if $n \geq 3$ and $\CAT(1)$ if $n = 2$. Moreover, if $n = 2$, $p \in \Omega$, and $0 < r \leq \pi/2$, then $B_{k_\Omega}(p,r)$ is $\CAT(0)$.
   \end{theorem}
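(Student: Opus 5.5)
The plan is to approximate the quasihyperbolic density $1/\delta_\Omega$, where $\delta_\Omega(x)=\min_{p\in P}|x-p|$, by smooth conformal densities $e^{F_\varepsilon}$ with $F_\varepsilon\to -\log\delta_\Omega$ locally uniformly. Each smooth metric $g_\varepsilon=e^{2F_\varepsilon}g_{\euc}$ will be shown to satisfy $K_{g_\varepsilon}\leq 2$ (resp.\ $\leq 0$ if $n=2$) and $\injrad_{g_\varepsilon}\geq D_2$ (resp.\ $D_1$). Corollary~\ref{cor:riemannian-globalization} then makes each $(\Omega,d_{g_\varepsilon})$ a $\CAT(\kappa)$ space. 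Pointed Gromov--Hausdorff convergence to $(\Omega,k_\Omega)$, proved as in Proposition~\ref{proposition-convergence-of-approximations}, combined with Theorem~\ref{thm:CAT-stability}, gives the claim. For the smoothing I would use a soft-min,
$$
F_\varepsilon=\varepsilon\log\sum_{p\in P}|x-p|^{-1/\varepsilon},
$$
i.e.\ $e^{F_\varepsilon}=\bigl(\sum_p|x-p|^{-1/\varepsilon}\bigr)^{\varepsilon}$, an $\ell^{1/\varepsilon}$-norm of the single-puncture densities. This decreases to $1/\delta_\Omega$ as $\varepsilon\to0$, so the monotonicity argument of Lemma~\ref{lemma-monotonicity} and Proposition~\ref{proposition-convergence-of-approximations} applies verbatim. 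Here $e^{-F_\varepsilon}$ is a $p$-mean-type smoothing of $\delta_\Omega$.

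\textbf{Curvature bound via Lemma~\ref{lem:conformal-curvature}.} Write $w_p=|x-p|^{-1/\varepsilon}/\sum_q|x-q|^{-1/\varepsilon}$ and $F_p=-\log|x-p|$. A direct computation gives
$$
\nabla F_\varepsilon=\sum_p w_p\nabla F_p,
\qquad
\Hess F_\varepsilon=\sum_p w_p\Hess F_p+\varepsilon^{-1}\operatorname{Cov}_w(\nabla F_p),
$$
where the covariance term is positive semidefinite and only helps. Since $\Hess F_p=|x-p|^{-2}(2u_p\otimes u_p-\mathrm{Id})$ with $u_p=(x-p)/|x-p|$, we get $-\operatorname{tr}_\Pi\Hess F_p=|x-p|^{-2}(2-2|\pi_\Pi u_p|^2)$. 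Combining this with $-|\pi_{\Pi^\perp}\nabla F_\varepsilon|^2\leq0$ yields
$$
e^{2F_\varepsilon}K\leq\sum_p w_p\,2|x-p|^{-2}.
$$
Because $\sum_pw_p|x-p|^{-2}\leq e^{2F_\varepsilon}$ (a mean of $|x-p|^{-2}$ under weights favoring the nearest points, up to choosing the power appropriately; I may adjust the exponent so that this is exactly a Hölder/power-mean inequality), this gives $K\leq 2$. For $n=2$ the traceless part cancels: $\operatorname{tr}\Hess F_p=0$ since $\log$ is harmonic in the plane. Hence $K\leq0$ there, and in particular $\leq1$. The same computation gives $|\nabla F_\varepsilon|\leq\sum_pw_p|x-p|^{-1}\leq e^{F_\varepsilon}$, which is exactly the hypothesis of Lemma~\ref{lem:g-length-of-conformal-closed-geodesic}.

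\textbf{Injectivity radius.} By Rauch, \eqref{eq:rauch-conjugate-radius} gives $\conjrad\geq D_2$. If $\injrad_{g_\varepsilon}(x)<D_2$ for some $x$, I want a contradiction via Klingenberg's lemma, Theorem~\ref{theorem-klingenberg-lemma}. The infimum of $\injrad$ must be attained, or approached only at the ends. Near $\infty$ and near each puncture, $g_\varepsilon$ is $C^\infty$-close to the cylinder $\R\times\Sph^{n-1}$, whose injectivity radius is $\pi\geq D_2$; Sakai's continuity of injectivity radius under such convergence gives $\liminf\injrad\geq\pi$ on the ends. So if $\inf\injrad<D_2$, the infimum is attained at some $x_0$ in a compact set. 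Klingenberg then gives a geodesic loop of length $2\injrad(x_0)$, and minimality of $x_0$ forces it to close up smoothly. Lemma~\ref{lem:g-length-of-conformal-closed-geodesic} gives length $\geq2\pi$, so $\injrad(x_0)\geq\pi>D_2$, a contradiction. For $n=2$ the same argument gives $\injrad\geq\pi=D_1$.

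\textbf{Main obstacle and the planar ball claim.} The main obstacle is the curvature estimate: the power-mean inequality $\sum w_p|x-p|^{-2}\leq e^{2F_\varepsilon}$ must hold with the right exponents, which may require a modified smoothing, e.g.\ $e^{2F}=(\sum|x-p|^{-2/\varepsilon})^{\varepsilon}$. A second obstacle is making the end analysis uniform. For the last claim, with $n=2$, $r\leq\pi/2$: $B_{k_\Omega}(p,r)$ is $\CAT(1)$-convex with radius $\leq D_1/2$, hence geodesically convex with unique geodesics, and it is locally $\CAT(0)$ since $K\leq0$ in the limit (Herron). A convex ball in a $\CAT(1)$ space of radius $\leq\pi/2$ is contractible, hence simply connected. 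Cartan--Hadamard, Theorem~\ref{theorem-cartan-hadamard}, applied to its completion or via patchwork then gives $\CAT(0)$.
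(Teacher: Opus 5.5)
Your proposal is essentially the paper's proof with $\beta=1/\varepsilon$. It uses the same $\ell^{\beta}$ soft-max smoothing, the same covariance decomposition of the Hessian fed into Lemma~\ref{lem:conformal-curvature}, and the same end control (Lemma~\ref{lem:ends}; the paper proves it by passing Klingenberg loops at $x_k$ to the limiting cylinder rather than by quoting a continuity theorem). It also uses Klingenberg plus Lemma~\ref{lem:g-length-of-conformal-closed-geodesic} on the compact part, and stability via Theorem~\ref{thm:CAT-stability}. The obstacle you flag is not one: since $|x-p|^{-1}\le e^{F_\varepsilon}$ for each $p$ and $\sum_p w_p=1$, the bound $\sum_p w_p|x-p|^{-2}\le e^{2F_\varepsilon}$ holds for your smoothing as written.

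The only deviation is the planar ball claim. You apply Cartan--Hadamard in the limit space, using Herron's local $\CAT(0)$ theorem. The paper instead applies Cartan--Hadamard to the $d_\beta$-balls of radius $\le\pi/2$, which are locally $\CAT(0)$, convex and contractible because $\injrad_{g_\beta}\ge\pi$. It then passes to the limit, which keeps the argument independent of Herron.
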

   The $n = 2$ case is contained in \cite{Herron-2020,HerronMartinSubharmonic} but our methods lead to a different proof of this fact.
    
    We fix the following notation for this section. We consider a finite set $\emptyset \neq P = \{p_1,\dots,p_m\} \subset \R^n$ and $\Omega = \R^n \setminus P$ for $n \geq 2$. 
    We denote\begin{equation*}
	r_i(x)=|x-p_i|,
	\quad
	\delta_P(x)=\min_i r_i(x),
	\quad
	\rho_P(x)=\frac1{\delta_P(x)}
	=\max_i\frac1{r_i(x)}.
\end{equation*}
    We also denote the quasihyperbolic metric on $\Omega$ by $g_{P} = \rho_{P}^{2}g_{\euc}$.

\subsection{The smooth approximation}
    For each $\beta \in [1,\infty)$, we define
    \begin{align*}
        \rho_\beta(x) = \left( \sum_{i=1}^{m} \left( \frac{1}{r_i(x)} \right)^{\beta} \right)^{ \frac{1}{\beta} }.
    \end{align*}
    The definition is motivated by the following proposition.
\begin{proposition}\label{proposition-smooth-approximation}
    The Riemannian metrics $g_\beta = \rho_{\beta}^{2}g_{\euc}$ define a smooth approximation of $\rho_{P}^{2}g_{\euc}$ as $\beta \rightarrow \infty$. More precisely, the metric $g_\beta$ is smooth,
    \begin{align*}
        \rho_P \leq \rho_{\beta} \leq m^{1/\beta} \rho_P,
    \end{align*}
    and $\lim_{\beta \rightarrow \infty} \rho_{\beta} = \rho_P$ uniformly in compact subsets of $\Omega$. In particular, $(\Omega,d_\beta)$ is complete and proper. Moreover, for every sequence $\beta_j\to \infty$ and 
every base point $o\in \Omega$,
\begin{equation*}
    (\Omega,d_{\beta_j},o)
    \xrightarrow{\mathrm{pGH}}(\Omega,k_{\Omega},o) \quad \text{ as } j\to \infty
\end{equation*}
with the convergence realized by the identity map.
\end{proposition}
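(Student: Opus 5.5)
Smoothness is immediate: on $\Omega=\R^n\setminus P$ each $r_i$ is smooth and positive, so $\sum_i r_i^{-\beta}$ is smooth and positive. Its $1/\beta$-th power $\rho_\beta$ is therefore smooth and positive, and $g_\beta$ is a smooth Riemannian metric. The two-sided bound is the comparison of $\ell^\beta$ and $\ell^\infty$ norms on $\R^m$ applied to the vector $(1/r_1(x),\dots,1/r_m(x))$. Its largest entry is $\rho_P(x)$, so $\rho_P^\beta\le\sum_i r_i^{-\beta}\le m\rho_P^\beta$, and taking $\beta$-th roots gives $\rho_P\le\rho_\beta\le m^{1/\beta}\rho_P$. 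Since $\rho_P$ is bounded on compact subsets of $\Omega$ and $m^{1/\beta}\to1$, we get $|\rho_\beta-\rho_P|\le (m^{1/\beta}-1)\sup_K\rho_P\to0$ uniformly on each compact $K\subset\Omega$.

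For completeness and properness, the left inequality $\rho_P\le\rho_\beta$ gives $k_\Omega\le d_\beta$. Hence a $d_\beta$-Cauchy sequence is $k_\Omega$-Cauchy. Likewise a closed $d_\beta$-bounded set is $k_\Omega$-bounded and $d_\beta$-closed. Both metrics induce the Euclidean topology on $\Omega$, so such a set is a closed subset of a $k_\Omega$-closed ball. By the Hopf--Rinow discussion in Section~\ref{section-preliminaries-metric-geometry}, $(\Omega,k_\Omega)$ is proper, since it is complete and locally compact; this is also seen directly from \eqref{eq:qh-estimate-1}, because $k_\Omega$-balls stay a definite distance from $P$ and are Euclidean-bounded. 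So closed $d_\beta$-balls are compact, and $(\Omega,d_\beta)$ is proper, hence complete.

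For the pGH convergence I will argue with the identity map, as in Proposition~\ref{proposition-convergence-of-approximations}, but more simply, since the inequalities now go in a clean direction. The bound $\rho_P\le\rho_\beta\le m^{1/\beta}\rho_P$ integrates along curves to
$$k_\Omega\le d_\beta\le m^{1/\beta}k_\Omega \quad\text{on }\Omega\times\Omega.$$
Fix $R>0$ and $\varepsilon\in(0,R)$, and let $f_j=\mathrm{id}\colon B_{k_\Omega}(o,R)\to\Omega$. For $x,y$ in this ball, $0\le d_{\beta_j}(x,y)-k_\Omega(x,y)\le(m^{1/\beta_j}-1)\,2R$. Hence $\operatorname{dis}(f_j)\le 2R(m^{1/\beta_j}-1)<\varepsilon$ for $j$ large. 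Moreover $B_{d_{\beta_j}}(o,R-\varepsilon)\subset B_{k_\Omega}(o,R-\varepsilon)\subset f_j(B_{k_\Omega}(o,R))$, so the covering condition of Definition~\ref{def:pointed-GH} holds trivially. This proves the convergence, realized by the identity map.

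None of these steps is a real obstacle. The only point needing care is that uniform convergence of $\rho_\beta$ holds only on compact sets, so a naive argument would have to control geodesics escaping to $P$ or to infinity. The global multiplicative bound $m^{1/\beta}$ avoids this entirely, which is why I will base the pGH step on it rather than on locally uniform convergence.
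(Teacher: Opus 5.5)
Your proposal is correct and follows the paper's route: the $\ell^\beta$--$\ell^\infty$ comparison gives $\rho_P\le\rho_\beta\le m^{1/\beta}\rho_P$, which integrates to $k_\Omega\le d_\beta\le m^{1/\beta}k_\Omega$, and pointed Gromov--Hausdorff convergence via the identity then follows. The paper only says this last step goes by ``a similar argument'' to Proposition~\ref{proposition-convergence-of-approximations}; your direct check of Definition~\ref{def:pointed-GH} fills it in, with distortion at most $2R(m^{1/\beta_j}-1)$ and the covering condition trivial because $k_\Omega\le d_{\beta_j}$. That check is simpler than the Arzel\`a--Ascoli argument used there, and your properness argument supplies a detail the paper leaves implicit.
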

\begin{proof}
    The elementary $\ell^{\beta}$-norm inequality gives $\rho_{P} \leq \rho_{\beta} \leq m^{1/\beta} \rho_P$ and hence, after integrating along curves and taking infima gives $k_\Omega \leq d_{\beta} \leq m^{1/\beta} k_\Omega$. Therefore, for every sequence $\beta_j\to\infty$ and every basepoint $o\in \Omega$, the pointed Gromov--Hausdorff convergence follows with a similar argument to that of Proposition~\ref{proposition-convergence-of-approximations}.
\end{proof}

\subsection{The geometry of the smooth approximation}
Motivated by the approximation result, Proposition \ref{proposition-smooth-approximation}, we prove the following theorem.
\begin{theorem}\label{theorem-smooth-approximation}
    For every $\beta \in [1,\infty)$, the space $( \Omega, d_{\beta})$ is $\CAT(2)$ if $n \geq 3$ and $\CAT(1)$ if $n = 2$.  Moreover, if $n = 2$, $p \in \Omega$, and $0 < r \leq \pi/2$, then $B_{k_\Omega}(p,r)$ is $\CAT(0)$.
\end{theorem}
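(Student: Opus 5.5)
Write $g_\beta=e^{2F}g_{\euc}$ with $F=\log\rho_\beta=\frac1\beta\log\sum_i r_i^{-\beta}$. Since $\rho_\beta\ge\rho_P$ and $\rho_\beta\to\infty$ at each $p_i$ and like $m^{1/\beta}/|x|$ at infinity, $(\Omega,d_\beta)$ is complete, as recorded in Proposition~\ref{proposition-smooth-approximation}. By Corollary~\ref{cor:riemannian-globalization}, it suffices to prove two things: a sectional curvature bound $K_{g_\beta}\le 2$ (respectively $\le 0$ when $n=2$), and $\injrad_{g_\beta}(\Omega)\ge D_2$ (respectively $\ge D_1=\pi$ when $n=2$).

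\textbf{Curvature.} I will use Lemma~\ref{lem:conformal-curvature}. Put $w_i=r_i^{-\beta}/\sum_j r_j^{-\beta}$, a probability vector, and $u_i=(x-p_i)/r_i$. A direct computation gives
\begin{align*}
\nabla F&=-\sum_i w_i\,\frac{u_i}{r_i},\\
\Hess F&=\sum_i \frac{w_i}{r_i^2}\bigl(2u_i\otimes u_i-I\bigr)+\beta\Bigl(\sum_i w_i\,\frac{u_i}{r_i}\otimes\frac{u_i}{r_i}-\nabla F\otimes\nabla F\Bigr).
\end{align*}
The $\beta$-term is a covariance matrix, hence positive semidefinite, so it only decreases $-\operatorname{tr}_\Pi\Hess F$. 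This leaves
\[
e^{2F}K\le \sum_i \frac{w_i}{r_i^2}\bigl(2-2|\pi_\Pi u_i|^2\bigr)-|\pi_{\Pi^\perp}\nabla F|^2\le 2\sum_i\frac{w_i}{r_i^2}.
\]
Since $w_i\le$ the relative weight of $r_i^{-\beta}$, a power-mean inequality gives $\sum_i w_i r_i^{-2}\le \rho_\beta^2=e^{2F}$, and hence $K\le 2$. When $n=2$, $\Pi$ is the whole plane, so $|\pi_\Pi u_i|=1$ and the bound improves to $K\le 0$. This is the sharp form of the inequality one should use, and it is the place where the dimension enters.

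\textbf{Injectivity radius.} By \eqref{eq:rauch-conjugate-radius}, $\conjrad\ge D_2$ (or $\infty$ when $n=2$). If $\injrad(p)<D_2$ at some $p$, Theorem~\ref{theorem-klingenberg-lemma} produces a geodesic loop of length $2\injrad(p)<2D_2$. I would minimise $\injrad$ instead. The cylindrical ends near each $p_i$ and near $\infty$ are $\mathcal C^\infty$-close to $\R\times\Sph^{n-1}$ with injectivity radius $\pi$, so Sakai's continuity of the injectivity radius \cite{Sakai-1983-on-continuity-of-injectivity-radius-function} shows that $\injrad$ is at least roughly $\pi>D_2$ on those ends. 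Hence, if $\inf\injrad<D_2$, the infimum is attained at some point $q$ of a compact set. Klingenberg's lemma then gives a smooth closed geodesic of length $2\injrad(q)$. Now $|\nabla F|\le\sum_i w_i/r_i\le\rho_\beta=e^F$, so Lemma~\ref{lem:g-length-of-conformal-closed-geodesic} gives length $\ge2\pi$, i.e.\ $\injrad(q)\ge\pi\ge D_2$. This is a contradiction. When $n=2$ the same argument yields $\injrad\ge\pi=D_1$.

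\textbf{The $n=2$ ball statement.} The ball statement (read for $d_\beta$-balls) follows from $K\le0$ together with the convexity of balls of radius $\le\pi/2=D_1/2$ in a $\CAT(1)$ space. Such a ball is a convex, hence simply connected, locally $\CAT(0)$ geodesic space, and Theorem~\ref{theorem-cartan-hadamard} gives $\CAT(0)$ after checking completeness of the closed ball.

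\textbf{Main obstacle.} I expect the hardest step to be making the end analysis rigorous: showing the metric near $p_i$ and near $\infty$ converges, in log-polar coordinates, to the round cylinder in $\mathcal C^2$ uniformly, so that the injectivity radius there stays $\ge D_2$. I would try to get this from Sakai's stability of the geodesic flow.
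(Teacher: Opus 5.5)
Your proposal is correct and follows essentially the same route as the paper: the conformal curvature formula with the nonnegative covariance term discarded, control of the cylindrical ends in log-polar coordinates via Sakai's geodesic stability, Klingenberg's lemma at a minimizer of the injectivity radius combined with the Fenchel--Borsuk length bound coming from $|\nabla F_\beta|\le e^{F_\beta}$, and Cartan--Hadamard on small $d_\beta$-balls when $n=2$. Your caution about completeness in the last step is warranted, since an open ball is not complete; applying Cartan--Hadamard to the convex closed balls of radius $s<\pi/2$ and then exhausting handles the endpoint $r=\pi/2$.
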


Theorem~\ref{theorem-smooth-approximation} is proved in two parts.
First, we have the following conclusion on the sectional curvature.
\begin{proposition}\label{proposition-sectional-curvature-upper-bound}
    For every $\beta \in [1,\infty)$, the sectional curvature of $g_\beta$ is bounded from above by $2$. More precisely, for every $\beta\geq1$, every $x\in \Omega$, and every
two-dimensional subspace $\Pi\subset T_x\Omega$, one has
    \begin{equation*}
        K_{g_\beta}(x,\Pi)
        \leq
        \begin{cases}
	        0, & \text{if $n=2$},\\
	        2, & \text{if $n\geq 3$}.
        \end{cases}
    \end{equation*}
\end{proposition}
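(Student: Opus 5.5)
The plan is to compute the sectional curvature of $g_\beta=e^{2F}g_{\euc}$ with $F=\log\rho_\beta$ directly from Lemma~\ref{lem:conformal-curvature}. The key is to write $F$ as a log-sum-exp of the functions $a_i\coloneqq-\log r_i$; this makes the Hessian of $F$ split into an average of the single-puncture Hessians plus a positive semidefinite covariance term.

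\emph{Setup.} Put
\begin{equation*}
S=\sum_{i=1}^m r_i^{-\beta},\qquad F=\tfrac1\beta\log S=\tfrac1\beta\log\sum_{i=1}^m e^{\beta a_i},\qquad w_i=\frac{r_i^{-\beta}}{S}.
\end{equation*}
The weights satisfy $w_i\in(0,1)$ and $\sum_i w_i=1$. Let $u_i=(x-p_i)/r_i$ be the Euclidean unit vectors. Then $\nabla a_i=-u_i/r_i$ and
\begin{equation*}
\Hess a_i=-\frac{I-2u_i\otimes u_i}{r_i^2}.
\end{equation*}
The standard log-sum-exp computation gives
\begin{equation*}
\nabla F=\sum_i w_i\nabla a_i,\qquad
\Hess F=\sum_i w_i\Hess a_i+\beta\,C,\qquad
C=\sum_i w_i\,\nabla a_i\otimes\nabla a_i-\nabla F\otimes\nabla F .
\end{equation*}
Here $C\succeq0$, since it is the weighted covariance matrix of the vectors $\nabla a_i$. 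Everything is smooth on $\Omega$, so $g_\beta$ is a smooth metric and Lemma~\ref{lem:conformal-curvature} applies.

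\emph{Curvature estimate.} Insert this into the second form of \eqref{eq:general-conformal-curvature}. The covariance contributes $-\beta\operatorname{tr}_\Pi C\le0$, and $-|\pi_{\Pi^\perp}\nabla F|^2\le 0$, so both can be dropped for an upper bound. For each $i$,
\begin{equation*}
-\operatorname{tr}_\Pi\Hess a_i=\frac{2-2|\pi_\Pi u_i|^2}{r_i^2}=\frac{2|\pi_{\Pi^\perp}u_i|^2}{r_i^2},
\end{equation*}
using $\operatorname{tr}_\Pi I=2$ and $|u_i|=1$. Hence
\begin{equation*}
e^{2F}K_{g_\beta}(x,\Pi)\le 2\sum_i w_i\frac{|\pi_{\Pi^\perp}u_i|^2}{r_i^2}.
\end{equation*}

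\emph{Conclusion in the two cases.} If $n=2$, then $\Pi=T_x\Omega$ and $\Pi^\perp=\{0\}$, so the right-hand side vanishes and $K_{g_\beta}\le 0$. If $n\ge3$, use $|\pi_{\Pi^\perp}u_i|\le1$ together with $\sum_i w_i=1$ to get
\begin{equation*}
e^{2F}K_{g_\beta}\le 2\max_i r_i^{-2}=2\rho_P^2\le 2\rho_\beta^2=2e^{2F},
\end{equation*}
where the middle inequality is Proposition~\ref{proposition-smooth-approximation}. Dividing by $e^{2F}$ gives $K_{g_\beta}\le 2$.

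The main obstacle I expect is getting the Hessian decomposition right, in particular the sign and $\beta$-scaling of the covariance term. Once it is clear that this term enters $K$ with a nonpositive sign, the rest is bookkeeping. The estimate needs no restriction on $\beta$ beyond $\beta>0$.
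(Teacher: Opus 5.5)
Your proof is correct and follows essentially the same route as the paper. Both arguments use the log-sum-exp decomposition $\Hess F_\beta=\sum_i w_{i,\beta}\Hess f_i+\beta\,\mathcal C_\beta$ with a positive semidefinite covariance term, substitute it into the conformal curvature formula, drop the nonpositive terms $-\beta\operatorname{tr}_\Pi\mathcal C_\beta$ and $-|\pi_{\Pi^\perp}\nabla F_\beta|^2$, and bound $\sum_i w_{i,\beta}r_i^{-2}\le\rho_\beta^2$ (with the right-hand side vanishing identically when $n=2$). One trivial nitpick: when $m=1$ the single weight equals $1$, so you should write $w_i\in(0,1]$ rather than $(0,1)$.
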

    Second, we need a lower bound on the injectivity radius.
\begin{proposition}\label{proposition-injectivity-radius-lower-bound}
    For every $\beta \in [1,\infty)$, 
    \begin{equation}\label{eq:global-inj}
        \inf_x \operatorname{injrad}_{g_\beta}(x)
        \geq
        \begin{cases}
	        \pi, & \text{if $n=2$},\\
	        \pi/\sqrt{2}, & \text{if $n\geq 3$}.
        \end{cases}
    \end{equation}
\end{proposition}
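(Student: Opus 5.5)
We prove the injectivity radius bound by combining Klingenberg's lemma (Theorem~\ref{theorem-klingenberg-lemma}) with the conjugate radius bound \eqref{eq:rauch-conjugate-radius} and the total curvature estimate of Lemma~\ref{lem:g-length-of-conformal-closed-geodesic}. Set $\kappa = 2$ if $n\geq 3$ and $\kappa=1$ if $n=2$. In the case $n = 2$, Proposition~\ref{proposition-sectional-curvature-upper-bound} gives $K\leq 0$, so the sharper bound $\pi = D_1$ is what we need. By Proposition~\ref{proposition-sectional-curvature-upper-bound} and \eqref{eq:rauch-conjugate-radius}, $\conjrad_{g_\beta}(\Omega)\geq D_2=\pi/\sqrt2$ when $n\geq3$, and $\conjrad_{g_\beta}(\Omega)=\infty$ when $n=2$. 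Hence it suffices to exclude $\injrad_{g_\beta}(x) < D_\kappa$ with $\injrad_{g_\beta}(x)<\conjrad_{g_\beta}(x)$. In that case Klingenberg gives a geodesic loop of length $2\injrad(x)<2D_\kappa\leq 2\pi$.

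To reach a contradiction we want a smooth closed geodesic, so we argue globally. Suppose $\iota\coloneqq\inf_x \injrad_{g_\beta}(x)<D_\kappa$. We first show that the infimum is attained. The main obstacle is the noncompactness of $\Omega$, which has cylindrical ends near each puncture $p_i$ and near $\infty$. Near $p_i$, the density $\rho_\beta$ equals $r_i^{-1}(1+O(r_i^\beta))$ in $\mathcal C^k$. After the change of variables $t=\log r_i$, the metric $g_\beta$ becomes $\mathcal C^k$-close to the product metric on $\R\times\Sph^{n-1}$, uniformly as $t\to-\infty$; the same holds at $\infty$ with $\rho_\beta\approx m^{1/\beta}/|x|$, which is a rescaled cylinder of circumference $2\pi m^{1/\beta}\geq2\pi$. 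By Sakai's continuity of the injectivity radius under $\mathcal C^2$-convergence of metrics, together with the fact that these ends are uniformly locally close to a cylinder, the lim inf of $\injrad$ along the ends is at least $\pi\cdot\min(1,m^{1/\beta})=\pi\geq D_\kappa$. Here one should really use the covering or exponential-map stability on balls of fixed size, which I expect to be the technical heart. Consequently, if $\iota<D_\kappa$, the infimum is attained at some point $x_0$ in a compact set, by continuity of $p\mapsto\injrad(p)$.

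At a minimizing point $x_0$ we have $\injrad(x_0)=\iota<D_\kappa\leq\conjrad(x_0)$, so Klingenberg gives a loop $\gamma$ of length $2\iota$ based at $x_0$. Since the midpoint $q=\gamma(\iota)$ satisfies $\injrad(q)\geq\iota$, and $d(x_0,q)=\iota$ forces $\injrad(q)\leq \iota$, we get $\injrad(q)=\iota$. This is the standard argument at a global minimum of $\injrad$, and the second part of Theorem~\ref{theorem-klingenberg-lemma} then shows that $\gamma$ is a smooth closed geodesic of length $2\iota<2\pi$.

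Finally we check the hypothesis of Lemma~\ref{lem:g-length-of-conformal-closed-geodesic} with $F=\log\rho_\beta$. We have $|\nabla F|=|\nabla\rho_\beta|/\rho_\beta$, and
\[
\nabla\rho_\beta=-\rho_\beta^{1-\beta}\sum_i r_i^{-\beta-1}\tfrac{x-p_i}{r_i},
\]
so $|\nabla\rho_\beta|\leq\rho_\beta^{1-\beta}\sum_i r_i^{-\beta-1}$. Hölder's inequality, or the monotonicity of $\ell^p$ norms, gives $\sum_i r_i^{-\beta-1}\leq(\sum_i r_i^{-\beta})^{(\beta+1)/\beta}=\rho_\beta^{\beta+1}$. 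Hence $|\nabla F|\leq\rho_\beta=e^F$. Lemma~\ref{lem:g-length-of-conformal-closed-geodesic} then shows that the closed geodesic has length at least $2\pi$, contradicting $2\iota<2\pi$. Therefore $\iota\geq D_\kappa$, which is \eqref{eq:global-inj}.
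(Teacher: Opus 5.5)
Your proposal is correct and follows the paper's proof essentially step for step. The paper likewise uses end control to force the infimum $\iota$ of the injectivity radius to be attained on a compact set, then Klingenberg's lemma at a minimizing point to produce a smooth closed geodesic of length $2\iota<2\pi$, and then Lemma~\ref{lem:g-length-of-conformal-closed-geodesic} to reach a contradiction. Your $\ell^{\beta}$-monotonicity check of $|\nabla F_\beta|\leq e^{F_\beta}$ is equivalent to the paper's bound $|b_\beta|\leq\sum_i w_{i,\beta}/r_i\leq\rho_\beta$.

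The end step you leave as a sketch is the paper's Lemma~\ref{lem:ends}. There it is made rigorous by localizing through Klingenberg and Rauch rather than by a black-box continuity theorem, which would not apply directly since you only have $C^\infty_{\mathrm{loc}}$ convergence on varying domains. Concretely, after pulling back to $\R\times\Sph^{n-1}$, a small injectivity radius yields short geodesic loops confined to a compact part of the cylinder. These loops subconverge to a geodesic loop of length $<2\pi$ on $\R\times\Sph^{n-1}$, which is impossible.
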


\begin{proof}[Proof of Theorem \ref{theorem-smooth-approximation}]
    Let $n \geq 3$. By Proposition~\ref{proposition-sectional-curvature-upper-bound} and Theorem \ref{thm:riemannian-local-CAT}, $(\Omega, d_\beta)$ is locally $\CAT(2)$. By Proposition~\ref{proposition-injectivity-radius-lower-bound}, if
    $d_\beta(x,y)<D_2$, then $y$ lies inside the injectivity-radius ball at
    $x$, so $x$ and $y$ are joined by a unique minimizing geodesic. Also, $(\Omega, d_{\beta})$ is a proper complete length space by Proposition~\ref{proposition-smooth-approximation}. Hence, by Corollary~\ref{cor:globalization-by-uniqueness}, $(\Omega, d_{\beta})$ is $\CAT(2)$. The argument for the $\CAT(1)$ property when $n = 2$ is similar. To finish the argument for $n = 2$, we see that $(\Omega, d_\beta )$ is locally $\CAT(0)$ which implies that the ball $B_{d_\beta}(p,r)$, for $0< r \leq \pi/2$, is complete and locally $\CAT(0)$. By the $\CAT(1)$ property, the ball is also convex and contractible along the radial geodesics. Thus, by the Cartan--Hadamard theorem, the ball is $\CAT(0)$ as claimed.
\end{proof}

     For the proofs of the above propositions, we need the following auxiliary functions:

\begin{align*}
    F_\beta=\log\rho_\beta=
\frac1\beta
\log\left(\sum_{i=1}^m e^{\beta f_i}\right)
	\quad\text{and}\quad
	g_\beta=\rho_\beta^2g_{\euc}.
\end{align*}
Let also $ f_i=-\log r_i$,
$$
w_{i,\beta}
=
\frac{e^{\beta f_i}}
     {\sum_{j=1}^m e^{\beta f_j}}
=
\frac{r_i^{-\beta}}
     {\sum_{j=1}^m r_j^{-\beta}},
\quad
b_i=\nabla f_i,
\quad
b_\beta=\nabla F_\beta.
$$
Thus $w_{i,\beta}>0$ and
$$
\sum_{i=1}^m w_{i,\beta}=1.
$$
Finally, define the covariance tensor
$$
\mathcal C_\beta
=
\sum_{i=1}^m w_{i,\beta}b_i\otimes b_i
-
b_\beta\otimes b_\beta.
$$

\begin{proof}[Proof of Proposition~\ref{proposition-sectional-curvature-upper-bound}]
Differentiating the expression for $F_\beta$ gives
\begin{equation*}
    b_\beta
    =
    \sum_{i=1}^m w_{i,\beta}b_i
    \quad\text{and}\quad
    \Hess F_\beta
    =
    \sum_{i=1}^m w_{i,\beta}\Hess f_i
    +
    \beta\mathcal C_\beta.
\end{equation*}
Moreover, for every $v\in T_x\Omega$,
$$
\begin{aligned}
\mathcal C_\beta(v,v)
&=
\sum_{i=1}^m
w_{i,\beta}\langle b_i,v\rangle^2
-
\left(
\sum_{i=1}^m
w_{i,\beta}\langle b_i,v\rangle
\right)^2 \\
&=
\sum_{i=1}^m
w_{i,\beta}
\langle b_i-b_\beta,v\rangle^2
\geq0.
\end{aligned}
$$
Thus $\mathcal C_\beta$ is positive semidefinite. For each $i$, direct differentiation gives
\begin{equation}
b_i(x)
=
-\frac{x-p_i}{r_i(x)^2},
\quad
|b_i(x)|=\frac1{r_i(x)},
\quad
\Hess f_i
=
-\frac1{r_i^2}g_{\euc}+2b_i\otimes b_i.
\label{eq:fi-derivatives}
\end{equation}

Fix a two-dimensional subspace $\Pi\subset T_x\Omega$. By
Lemma~\ref{lem:conformal-curvature}, and since
$g_\beta=e^{2F_\beta}g_{\euc}=\rho_\beta^2g_{\euc}$, we have
$$
\rho_\beta^2K_{g_\beta}(x,\Pi)
=
-\operatorname{tr}_\Pi(\Hess F_\beta)
+
|\pi_\Pi b_\beta|^2
-
|b_\beta|^2.
$$
Equation~\eqref{eq:fi-derivatives} yields
$$
\begin{aligned}
-\operatorname{tr}_\Pi(\Hess f_i)
&=
\frac2{r_i^2}
-
2|\pi_\Pi b_i|^2 
=
2|\pi_{\Pi^\perp} b_i|^2.
\end{aligned}
$$
Furthermore,
$$
\operatorname{tr}_\Pi\mathcal C_\beta
=
\sum_{i=1}^m
w_{i,\beta}|\pi_\Pi b_i|^2
-
|\pi_\Pi b_\beta|^2.
$$
Substitution therefore gives the exact identity
\begin{equation}
\begin{aligned}
\rho_\beta^2K_{g_\beta}(x,\Pi)
={}&
2\sum_{i=1}^m
w_{i,\beta}|\pi_{\Pi^\perp}b_i|^2
-
|\pi_{\Pi^\perp}b_\beta|^2
-
\beta\operatorname{tr}_\Pi\mathcal C_\beta.
\end{aligned}
\label{eq:exact-curvature}
\end{equation}
Since $\mathcal C_\beta$ is positive semidefinite,
\eqref{eq:exact-curvature} implies
$$
\begin{aligned}
\rho_\beta^2K_{g_\beta}(x,\Pi)
&\leq
2\sum_{i=1}^m w_{i,\beta}|b_i|^2
=
2\sum_{i=1}^m\frac{w_{i,\beta}}{r_i^2}
\leq
2\rho_\beta^2.
\end{aligned}
$$
The last inequality follows from
$r_i^{-1}\leq\rho_\beta$ and
$\sum_iw_{i,\beta}=1$. Dividing by $\rho_\beta^2$ proves the
claim when $n \geq 3$. When $n = 2$, it holds that $\pi_{\Pi^{\perp}}$ is the zero map so $\rho_{\beta}^{2}K_{g_\beta}(x,\Pi) = -\beta\operatorname{tr}_\Pi\mathcal C_\beta \leq 0$. The claim follows.
\end{proof}

\begin{remark}[The covariance contribution]
\label{rem:covariance-contribution}
The $\beta$-weighted covariance term in
\eqref{eq:exact-curvature} satisfies
$$
-\beta\operatorname{tr}_\Pi\mathcal C_\beta
=
-\beta
\sum_{i=1}^m
w_{i,\beta}
|\pi_\Pi(b_i-b_\beta)|^2
\leq0.
$$
Thus the covariance term arising from the variation of the
log-sum-exp weights cannot make a positive contribution to the
sectional curvature. In particular, the upper estimate remains
uniform at the medial axis. This does not mean that the total sectional curvature is
nonpositive there: the remaining terms in
\eqref{eq:exact-curvature} may be positive and can reach the
upper bound $2$ when $n \geq 3$. We discuss an example to this effect in Section \ref{section-sharpness-of-the-curvature-bound}.
\end{remark}

For the proof of Proposition~\ref{proposition-injectivity-radius-lower-bound}, we need to analyze the injectivity radius in two regions: near the punctures $P$ or the infinity point $\{\infty\}$ and near the boundaries of the Voronoi cells associated to $P$. We start with the following lemma.

\begin{lemma}[Cylindrical end control]\label{lem:ends}
For every $\beta \in [1,\infty)$ and $\varepsilon > 0$, there is a compact set
$K_\beta\subset \Omega$ such that
\begin{equation*}
	\inj_{g_{\beta}}(x)>\pi-\varepsilon
	\quad\text{for }x\in \Omega\setminus K_\beta.
\end{equation*}
\end{lemma}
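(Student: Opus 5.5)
Near each puncture $p_i$ and near infinity, the metric $g_\beta$ is a small perturbation of the exact cylinder $\R\times\Sph^{n-1}$, whose injectivity radius is $\pi$. The plan is to make this precise in logarithmic coordinates and then pass to the limit using continuity of the injectivity radius under $C^2$ convergence of metrics, as given by Sakai's stability result \cite{Sakai-1983-on-continuity-of-injectivity-radius-function}.

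\textbf{Coordinates near a puncture.} Fix $p_i$ and write $x=p_i+e^{-t}\theta$ with $t\in\R$ and $\theta\in\Sph^{n-1}$. In these coordinates $r_i^{-2}g_{\euc}=dt^2+g_{\Sph^{n-1}}$, which is the standard cylinder. We factor
\[
g_\beta = e^{2u_\beta}\,\bigl(dt^2+g_{\Sph^{n-1}}\bigr),
\qquad
u_\beta = \frac1\beta\log\Bigl(1+\sum_{j\neq i}\bigl(r_i/r_j\bigr)^{\beta}\Bigr).
\]
As $t\to\infty$ we have $r_i=e^{-t}\to0$, while $r_j$ stays bounded below for $j\neq i$. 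Hence $u_\beta$ and all of its cylinder-coordinate derivatives up to order two are $O(e^{-\beta t})$. This is because each derivative $\partial_t$ or $\partial_\theta$ of $(r_i/r_j)^\beta$ produces only a bounded factor: in these coordinates $r_i$ scales like $e^{-t}$ and $r_j$ is smooth and nonvanishing.

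\textbf{Coordinates near infinity.} The same argument works with $x=e^{t}\theta$ and $t\to+\infty$. Here
\[
\rho_\beta = |x|^{-1}\Bigl(\sum_j \bigl(|x|/r_j\bigr)^{\beta}\Bigr)^{1/\beta},
\qquad |x|/r_j\to1,
\]
so the conformal factor relative to the cylinder is $\frac1\beta\log m+o(1)$. The constant $\frac1\beta\log m$ only rescales the cylinder to radius $m^{1/\beta}\geq1$, which can only increase the injectivity radius, to $m^{1/\beta}\pi$. The remainder is $O(e^{-t})$ in $C^2$, because $|x|/r_j-1=O(|x|^{-1})$ with derivatives of the same order in cylinder coordinates.

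\textbf{Limit argument.} Suppose the lemma fails. Then there are points $x_k$ escaping every compact set, say converging to some $p_i$ or to $\infty$, with $\inj_{g_\beta}(x_k)\leq\pi-\varepsilon$. Translate in the $t$-variable so that $x_k$ sits at $t=0$. The pulled-back metrics converge in $C^2_{\mathrm{loc}}$ on $\R\times\Sph^{n-1}$ to the product metric (rescaled by $m^{2/\beta}$ at infinity). The main obstacle is that the pulled-back metrics are defined only on half-cylinders $(-T_k,\infty)\times\Sph^{n-1}$ with $T_k\to\infty$, and that the injectivity radius is a global quantity, so a far-away region could in principle produce short loops.

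I would handle this through Klingenberg's lemma (Theorem~\ref{theorem-klingenberg-lemma}). Since $\conjrad\geq D_2>\pi-\varepsilon$ by \eqref{eq:rauch-conjugate-radius} and Proposition~\ref{proposition-sectional-curvature-upper-bound}, an injectivity radius $\leq\pi-\varepsilon$ forces a geodesic loop at $x_k$ of length at most $2\pi-2\varepsilon$. Such a loop stays within $g_\beta$-distance $\pi$ of $x_k$, hence inside a fixed-size region of the almost-cylinder. Passing to the limit via Arzelà--Ascoli, we obtain a geodesic loop of length $\leq2\pi-2\varepsilon$ in the exact cylinder of radius $\geq1$. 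The limit is nonconstant because the conjugate radius bound gives uniform local injectivity of the exponential map, in the spirit of Sakai's continuity of the injectivity radius. This is a contradiction, since every geodesic loop in a round cylinder of radius $s\geq1$ has length at least $2\pi s\geq 2\pi$.

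Finally, the set of points with $\inj_{g_\beta}\leq\pi-\varepsilon$ avoids neighbourhoods of $P$ and of $\infty$. It is therefore contained in a compact set $K_\beta\subset\Omega$, which proves the lemma.
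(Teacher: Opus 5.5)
Your overall architecture matches the paper's. You use logarithmic coordinates at each puncture and at infinity, $C^2_{\mathrm{loc}}$ convergence of the recentred metrics to the round cylinder (of radius $m^{1/\beta}$ at infinity), a Klingenberg loop at $x_k$ of length $2\inj_{g_\beta}(x_k)$ that stays in a bounded piece of the almost-cylinder, and a limiting geodesic loop of length $<2\pi$ in the cylinder. Localizing the loop, instead of gluing the pulled-back metric to a full cylinder with a cutoff as the paper does, is fine.

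The step that feeds Klingenberg's lemma, however, is false in the relevant range. The inequality $\conjrad_{g_\beta}\geq D_2>\pi-\varepsilon$ requires $\varepsilon>\pi-\pi/\sqrt2\approx 0.92$. Take $n\geq3$ and smaller $\varepsilon$; this is exactly the range $0<\varepsilon<\pi-D_2$ in which the lemma is invoked in the proof of Proposition~\ref{proposition-injectivity-radius-lower-bound}. There the global bound $K_{g_\beta}\leq 2$ only gives $\conjrad_{g_\beta}\geq\pi/\sqrt2<\pi-\varepsilon$. Nothing then rules out $\inj_{g_\beta}(x_k)=\conjrad_{g_\beta}(x_k)\leq\pi-\varepsilon$, and Theorem~\ref{theorem-klingenberg-lemma} yields no loop. (For $n=2$ your step is fine, since there $K_{g_\beta}\leq 0$.)

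The missing ingredient is a \emph{local} curvature bound near the ends, and your own setup supplies it:
\begin{itemize}
\item The $C^2_{\mathrm{loc}}$ convergence gives $K\leq 1+\eta$ on $[-2\pi,2\pi]\times\Sph^{n-1}$ for large $k$.
\item Every geodesic from $x_k$ of $g_\beta$-length $<\pi$ stays in that region, because the conformal factor relative to the cylinder is bounded below on the patch.
\item Rauch comparison along such geodesics therefore gives $\conjrad_{g_\beta}(x_k)\geq\pi/\sqrt{1+\eta}>\pi-\varepsilon$ once $\eta$ is small.
\end{itemize}
This is precisely how the paper argues.

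Your justification that the limit loop is nonconstant is also off. A conjugate radius bound only makes $\exp_{x_k}$ a local diffeomorphism; it does not make it injective on a uniform ball. What works is the following: in the recentred coordinates, after rotating $\theta_k$ to a fixed point, the maps $\exp_{x_k}$ converge in $C^1$ on compact sets to the exponential map of the cylinder. The latter is an embedding on closed balls of radius $<\pi$. Since embeddings of a compact ball form a $C^1$-open set, this already gives $\inj_{g_\beta}(x_k)\geq\pi-\varepsilon/2$ for large $k$ directly, without Klingenberg's lemma.
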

\begin{proof}
For the duration of the proof, we fix $\beta \in [1,\infty)$. It suffices to prove that if $( x_k )_k$ is a sequence in $\R^n \setminus P$ converging to a point $p_j \in P$ or to $p_\infty = \infty$, then $\liminf_{k} \inj_{g_{\beta}}(x_k) \geq \pi$.

We first consider the case where $x_k \rightarrow p_j$. We argue by contradiction: $r_0 = \liminf_{ k } \inj_{g_{\beta}}( x_k ) < \pi$. Letting $\varepsilon > 0$ be small enough, by passing to a subsequence and relabeling, we may suppose that 
\begin{equation*}
    \inj_{g_{\beta}}( x_k ) < \frac{ \pi }{ \sqrt{1+\varepsilon} }
\end{equation*}
for each $k \in \mathbb{N}$. 

We derive a contradiction by considering suitable approximation at the end $p_j$ at infinity. For this, let $\delta = \min_{ a \neq b } |p_a-p_b|$. Up to passing to a tail of the sequence, we may suppose that $\sup_{k} |x_k-p_j| < \delta/3$. For $s>0$, consider
$$
	\Phi_{j,s}(u,\theta)=p_j+e^{-(s+u)}\theta,
	\quad
	(u,\theta)\in U_s
$$
for the domain $U_{s} = \{ (u,\theta) \in \R \times \Sph^{n-1} \colon u > -s - \log(\delta/3) \}$. By construction,
\begin{align*}
    \Phi_{j,s}^{*}g_{\euc} = e^{-2(s+u)}( du^2 + g_{ \Sph^{n-1} } ).
\end{align*}
Consequently,
\begin{equation*}
	\Phi_{j,s}^*g_\beta
	=
	A_{j,s}(u,\theta)^2
	\bigl(du^2+g_{\Sph^{n-1}}\bigr),
\end{equation*}
where
\begin{equation*}
	A_{j,s}^{\beta}
	=
	1+
	\sum_{i\neq j}
	\left(
	\frac{e^{-(s+u)}}
	{|p_j-p_i+e^{-(s+u)}\theta|}
	\right)^\beta.
\end{equation*}
Indeed,
$A_{j,s}=e^{-(s+u)}\rho_\beta\circ\Phi_{j,s}$.
In particular, in $U_s$, it holds that
\begin{align*}
    |p_j-p_i+e^{-(s+u)}\theta| \geq \delta/3.
\end{align*}
It follows that
\begin{equation*}
	A_{j,s}\longrightarrow1
	\quad\text{in }C^\infty_{\mathrm{loc}}
		(\R\times\Sph^{n-1})
	\quad\text{as }s\to\infty.
\end{equation*}
Here $C^\infty_{\mathrm{loc}}$ convergence means that the functions and each of their iterated derivatives converge uniformly on every compact subset. For maps that are eventually defined on each compact subset, as above, this statement is understood for all sufficiently large $s$ on that subset.

By construction, there are parameters $( s_k )_k$ and $( \theta_k )_k$ in $\Sph^{n-1}$ so that $\Phi_{j,s_k}( 0, \theta_k ) = x_k$ for each $k \in \mathbb{N}$ and $s_k \rightarrow \infty$. 

The metrics $\Phi^{*}_{j,s_k}g_\beta$ are defined on the variable domains $U_{s_k} = (-a_k,\infty) \times \Sph^{n-1}$ for $a_k \to \infty$, so we lose no generality in supposing that $-a_k < -2\pi$ for every $k$. Now, by considering a smooth cut-off function depending on $a_k$, we find a smooth Riemannian metric $g_k$ on $\R \times \Sph^{n-1}$ that agrees with the product metric $g_{ \R \times \Sph^{n-1} }$ in $(-\infty,-a_k) \times \Sph^{n-1}$ and with the pullback $\Phi^{*}_{j,s_k}g_\beta$ on $[-2\pi,\infty) \times \Sph^{n-1}$. We may arrange things so that $( g_k )_k$ converges to $g_{ \R \times \Sph^{n-1} }$ in $C^\infty_{\mathrm{loc}} (\R\times\Sph^{n-1})$. By the smooth convergence, the sectional curvatures of these metrics converge uniformly in compact sets. Since $\R \times \mathbb{S}^{n-1}$ has sectional curvature at most one, for every sufficiently large $k$, the sectional curvatures of $g_k$ are bounded from above by $1+\varepsilon$ in $[-2\pi,2\pi] \times \Sph^{n-1}$. 

By the smooth convergence of the metrics, for every sufficiently large $k$,
\begin{align*}
    \frac{ 1 }{ 1+\varepsilon } g_k \leq g_{ \R \times \Sph^{n-1} } \leq (1+\varepsilon)g_k
\end{align*}
in $T( (-2\pi,2\pi) \times \Sph^{n-1} )$. Moreover, by Rauch comparison theorem, see~\eqref{eq:rauch-conjugate-radius}, a $g_k$-geodesic starting at $\{0\} \times \Sph^{n-1}$ and of $g_k$-length $<\pi/\sqrt{1+\varepsilon}$ contains no conjugate points. It now follows by Theorem~\ref{theorem-klingenberg-lemma} that there exists a closed $g_k$-geodesic $\gamma_k \colon [0, \ell_k] \to ( \R \times \Sph^{n-1}, g_k )$ starting at $( 0, \theta_k )$ and of length $\ell_k = 2\inj_{g_{\beta}}( x_k ) < 2\pi/\sqrt{ 1 + \varepsilon }$. Note that, by definition of the injectivity radius, $\gamma_k|_{ [ 0, \ell_k/2 ] }$ and $\gamma_k|_{ [\ell_k/2,\ell_k] }$ are length-minimizing. Thus $\gamma_k$ is contained in $(-2\pi,2\pi) \times \Sph^{n-1}$.

By passing to a further subsequence and relabeling, we may suppose that $\theta_k \rightarrow \theta \in \mathbb{S}^{n-1}$ and $v_k = \gamma_{k}'(0) \rightarrow v \in T( \mathbb{R} \times \Sph^{n-1} )$. By the stability of the geodesic equation, see e.g. \cite[Lemma~1.5]{Sakai-1983-on-continuity-of-injectivity-radius-function}, the geodesic in $\R \times \mathbb{S}^{n-1}$ starting at $(0,\theta)$ in the direction $v$ is a pointwise limit of $( \gamma_k )_k$ (see \cite[page~96]{Sakai-1983-on-continuity-of-injectivity-radius-function} for a similar argument). Obviously the limiting geodesic is also closed. As the injectivity radius of $\R \times \mathbb{S}^{n-1}$ is $\pi$ and the limiting closed geodesic has length $2r_0 < 2\pi$, we obtain a contradiction. The claim follows.

We next consider the case $p_\infty = \infty$ and $x_k \rightarrow p_\infty$. Now, whenever $( x_k )_k$ is a sequence in $\Omega = \R^n \setminus P$ converging to $p_\infty$ (i.e., eventually leaving each compact subset of $\R^n$), we claim that $\liminf_{k} \inj_{g_{\beta}}(x_k) \geq m^{1/\beta}\pi$. The argument is similar to the case above, so we only outline the necessary modifications. To this end, let $M = \max_{ j \neq \infty } |p_j|$ and consider
$$
	\Phi_{\infty,s}(u,\theta)=e^{s+u}\theta
$$
in $U_s = \{ (u,\theta) \in \R \times \mathbb{S}^{n-1} \colon u > -s + \log(3M) \}$.

As above,
\begin{equation*}
	\Phi_{\infty,s}^*g_\beta
	=
	A_{\infty,s}(u,\theta)^2
	\bigl(du^2+g_{\Sph^{n-1}}\bigr),
\end{equation*}
where
\begin{equation}
	A_{\infty,s}^{\beta}
	=
	\sum_{i=1}^m
	|\theta-e^{-(s+u)}p_i|^{-\beta}.
	\label{eq:infinity-factor}
\end{equation}
Indeed,
$A_{\infty,s}=e^{s+u}\rho_\beta\circ\Phi_{\infty,s}$. Observe that on every compact $u$-interval, each summand in~\eqref{eq:infinity-factor} converges to $1$ in $C^\infty_{\mathrm{loc}}(\R\times\Sph^{n-1})$ as $s \rightarrow \infty$.  Consequently,
\begin{equation*}
	A_{\infty,s}\longrightarrow m^{1/\beta}
	\quad\text{in }C^\infty_{\mathrm{loc}}
		(\R\times\Sph^{n-1})
	\quad\text{as }s\to\infty.
\end{equation*}
We pass to a subsequence for which $|x_k| > 3M$ for every $k$. Thus, there are parameters $( s_k )_k$ and $( \theta_k )_k$ in $\Sph^{n-1}$ such that $\Phi_{\infty,s_k}(0,\theta_k) = x_k$ and $s_k \rightarrow \infty$. Now the rest of the argument is similar to the case above. We omit the details.
\end{proof}
We next verify the conditions in Lemma~\ref{lem:g-length-of-conformal-closed-geodesic} for $F_{\beta}$. Since $b_\beta=\nabla F_\beta=\sum_{i=1}^m w_{i,\beta}b_i$,
the triangle inequality and~\eqref{eq:fi-derivatives} give
$$
|\nabla F_\beta|
=
|b_\beta|
\leq
\sum_{i=1}^m w_{i,\beta}|b_i|
=
\sum_{i=1}^m \frac{w_{i,\beta}}{r_i}.
$$
Since $\beta>0$, for every $i$ we have
$$
\rho_\beta
=
\left(
\sum_{j=1}^m r_j^{-\beta}
\right)^{1/\beta}
\geq
\left(r_i^{-\beta}\right)^{1/\beta}
=
\frac1{r_i}.
$$
Consequently,
$$
|\nabla F_\beta|
\leq
\rho_\beta
\sum_{i=1}^m w_{i,\beta}
=
\rho_\beta.
$$
\begin{proof}[Proof of Proposition~\ref{proposition-injectivity-radius-lower-bound}]
    Let $\pi - D_2 > \varepsilon > 0$ and first let $n \geq 3$. Consider the set $K_\beta$ from Lemma~\ref{lem:ends} and the compact set $C = K \cup K_\beta$. Then $\inj_{g_{\beta}}(p) > \pi-\varepsilon$ for every $p \in \Omega \setminus C$ by the lemma. Thus, if it were the case that $\inj_{g_{\beta}}(p) < D_2$ for some $p \in \Omega$, then necessarily $p \in C$. By the continuity of the pointwise injectivity radius, see \cite[Proposition~10.37]{Lee}, we therefore find $p \in C$ such that $\iota \coloneqq \inj_{g_{\beta}}(p) = \inf_{x \in \Omega} \inj_{g_\beta}(x)$.
    
    By Theorem~\ref{theorem-klingenberg-lemma} (and Rauch comparison which is applicable by Proposition~\ref{proposition-sectional-curvature-upper-bound}), we find a periodic geodesic $\gamma \colon \R \to ( \Omega, g_\beta )$ starting at $p$ and of length $2\iota$; the smoothness at $0$ follows because $\iota = \inj_{g_{\beta}}( \gamma( \iota ) )$. Now Lemma~\ref{lem:g-length-of-conformal-closed-geodesic} gives
	$$
		2\pi\leq2\iota
		<2D_2=\sqrt2\,\pi<2\pi.
	$$
	This is a contradiction, thus leading to the proof of~\eqref{eq:global-inj} in the case $n \geq 3$. Regarding the case $n = 2$, if we had $\iota < \pi - \varepsilon$, we argue as above and find $p \in C$ with $\iota = \iota_{g_{\beta}}( p )$ and a smooth geodesic $\gamma$ at $p$ of length $<2(\pi-\varepsilon)$. This leads to a contradiction as in the case $n \geq 3$. Since $\varepsilon$ can be taken to be arbitrarily small, the claim follows.
\end{proof}

\begin{proof}[Proof of Theorem~\ref{thm:finite-punctures-CAT-2}]
    The quasihyperbolic metric can be approximated, in the pointed Gromov--Hausdorff sense, by smooth Riemannian manifolds $( \Omega, k_\beta )$ for $\beta \rightarrow \infty$, see Proposition~\ref{proposition-smooth-approximation}. By Theorem~\ref{theorem-smooth-approximation}, the Riemannian approximations are $\mathrm{CAT}(2)$ for $n \geq 3$ and $\CAT(1)$ (and $\CAT(0)$ for balls with radius $\leq \pi/2$) for $n = 2$. Thus, the conclusion follows from the stability, Theorem~\ref{thm:CAT-stability}.
\end{proof}

\section{Proofs of Theorems~\ref{thm:universal-cat2}, \ref{theorem-cbb-equivalent-to-concavity}, and \ref{thm:convex-cat0}}\label{theorem-proof-of-many-results}

\subsection{The main result}
In this subsection, we prove Theorem~\ref{thm:universal-cat2}, i.e., that every quasihyperbolic domain is $\mathrm{CAT}(2)$ when $n \geq 3$. Lemma~\ref{lemma-monotonicity} and the stability theorem, Theorem~\ref{thm:CAT-stability}, reduces the claim to the case where $P = \R^n \setminus \Omega$ is finite. For such an $\Omega$, Theorem~\ref{thm:finite-punctures-CAT-2} shows the claim. The claim follows.

\begin{remark}\label{remark-small-balls-are-cat(0)-in-two-dimensions}
    The proof presented above shows more when $n = 2$. Indeed, then $(\Omega,k_\Omega)$ is $\CAT(1)$ and $B_{k_\Omega}(p,r)$ is $\CAT(0)$ for every $0 < r \leq \pi/2$. Consequently, for the radii $\pi/2 < r < \pi$, the ball is $\CAT(0)$ in the intrinsic length distance.
\end{remark}

\begin{remark}\label{remark-local-dc-regularity}
    Asplund proved in~\cite[page 235]{Asplund-1969-cebysev-sets-in-hilbert-space} that if $\Omega \subsetneq \R^n$ is a domain, then $x \mapsto \varphi(x) = \delta_{\Omega}(x)^2 - |x|^2$ is concave. In particular, it holds that $\delta_{\Omega}^{2} = \varphi - (-|\cdot|^2)$ represents $\delta_{\Omega}^2$ as a difference of two concave functions. That is, $\delta_\Omega^2$ is a \emph{$\mathcal{DC}$ function}; see \cite{Ambrosio-Bertrand-2018-dc-calculus}. Since every $\mathcal{C}^2$-regular function is locally $\mathcal{DC}$ and composition of locally $\mathcal{DC}$ functions remains in the same class, it holds that $\delta_{\Omega}$ and $\delta_{\Omega}^{-2}$ are locally $\mathcal{DC}$.
    
    As is well-known, see e.g. \cite{Ambrosio-Bertrand-2018-dc-calculus}, the distributional derivative of $\delta_{\Omega}$ (resp. $\delta_{\Omega}^{\pm 2})$ has bounded variation and thus its distributional Hessian is represented by a matrix-valued Radon measure. This allows us to prove the following \emph{distributional sectional curvature identity} for $g = \delta_{\Omega}^{-2}g_{\euc}$ for the Euclidean distance: for each $2$-plane $\Pi$, the distributional sectional curvature satisfies
    \begin{align}
        K_g(\cdot,\Pi)
        &=
        \delta_\Omega
        \operatorname{tr}_{\Pi}(\Hess\delta_\Omega)-\mathcal{L}^n\llcorner{\Omega},
        \label{eq:distributional-distance-curvature}
    \end{align}
    where $\mathcal{L}^n\llcorner{\Omega}$ is the Lebesgue measure on $\Omega$. Here \eqref{eq:distributional-distance-curvature} should be understood as equality between two locally finite Radon measures on $\Omega$. This follows from a standard smooth approximation via a partition of unity and convolution; see Lemma~\ref{lem:smooth-distance-curvature} below.

    A direct link between distributional sectional and Alexandrov curvature bounds for $\mathcal{C}^1$ metrics were recently established in \cite{eros-kunzinger-obanyan-vardabasso-2026-distributional-sectional-curvature-bounds-for-riemannian-metrics-of-low-regularity}. However, since $\delta_{\Omega}^{-2}$ is only (local) $\mathcal{DC}$, the applicability of their results seems barely out of reach for us. In fact, an Alexandrov curvature lower bound on a quasihyperbolic domain turns out to be equivalent to $\delta_{\Omega}$ being $\mathcal{C}^1$. We discuss this in the coming subsection.
\end{remark}

We finish this subsection with the following lemma.
\begin{lemma}\label{lem:smooth-distance-curvature}
Let $\Omega\subsetneq\R^n$ be a domain and $n \geq 2$. Then \eqref{eq:distributional-distance-curvature} holds in the sense of Radon measures.
\end{lemma}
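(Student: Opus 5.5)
The identity \eqref{eq:distributional-distance-curvature} is the formula $K_g(\cdot,\Pi)=\delta\operatorname{tr}_\Pi(\Hess\delta)-|\nabla\delta|^2$, which holds for smooth conformal factors, specialised to $|\nabla\delta_\Omega|=1$ almost everywhere. I would therefore prove it in three steps: derive the pointwise formula for smooth positive functions, pass to the limit through mollification, and use the eikonal equation.

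\emph{Step 1 (smooth identity).} Let $u>0$ be smooth on an open set $U\subset\Omega$ and put $F=-\log u$, so that $u^{-2}g_{\euc}=e^{2F}g_{\euc}$. Then
\begin{equation*}
\nabla F=-\frac{\nabla u}{u},
\qquad
\Hess F=-\frac{\Hess u}{u}+\frac{\nabla u\otimes\nabla u}{u^2}.
\end{equation*}
Substituting into Lemma~\ref{lem:conformal-curvature} gives
\begin{equation*}
u^{-2}K=\frac{\operatorname{tr}_\Pi\Hess u}{u}-\frac{|\pi_\Pi\nabla u|^2}{u^2}-\frac{|\pi_{\Pi^\perp}\nabla u|^2}{u^2}.
\end{equation*}
The two projection terms add up to $|\nabla u|^2/u^2$. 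Multiplying by $u^2$ yields
\begin{equation*}
K_{u^{-2}g_{\euc}}(\cdot,\Pi)=u\operatorname{tr}_\Pi(\Hess u)-|\nabla u|^2 .
\end{equation*}
Accordingly, the distributional sectional curvature of $g=\delta_\Omega^{-2}g_{\euc}$ is understood as the limit of the right-hand side along smooth approximations $u_\varepsilon\to\delta_\Omega$. The task is then to show that this limit exists as a Radon measure and equals $\delta_\Omega\operatorname{tr}_\Pi(\Hess\delta_\Omega)-\mathcal L^n\llcorner\Omega$.

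\emph{Step 2 (mollification).} Fix a locally finite partition of unity on $\Omega$. The statement is local, so it suffices to work on a compact set $V\Subset\Omega$, where $\delta_\Omega\ge c>0$. Let $\delta_\varepsilon=\delta_\Omega*\eta_\varepsilon$.
\begin{itemize}
\item Since $\delta_\Omega$ is $1$-Lipschitz, $\delta_\varepsilon\to\delta_\Omega$ uniformly on $V$ and $|\nabla\delta_\varepsilon|\le1$.
\item By Remark~\ref{remark-local-dc-regularity}, $\delta_\Omega$ is locally $\mathcal{DC}$. More concretely, $\delta_\Omega=\sqrt{\delta_\Omega^2}$ with $\delta_\Omega^2=\varphi+|x|^2$ and $\varphi$ concave, so $\delta_\Omega$ is semiconcave on $V$. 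Hence $\nabla\delta_\Omega\in BV_{\mathrm{loc}}$, and $D^2\delta_\Omega$ is a matrix-valued Radon measure.
\item Consequently $\Hess\delta_\varepsilon\,\mathcal L^n=(D^2\delta_\Omega)*\eta_\varepsilon$ has locally uniformly bounded total variation and converges weakly-$*$ to $D^2\delta_\Omega$.
\end{itemize}
Testing against $\phi\in C_c(V)$, split
\begin{equation*}
\int\phi\,\delta_\varepsilon\,d\operatorname{tr}_\Pi D^2\delta_\varepsilon-\int\phi\,\delta_\Omega\,d\operatorname{tr}_\Pi D^2\delta_\Omega
\end{equation*}
into two terms. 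The first is a uniform-error term bounded by $\|\delta_\varepsilon-\delta_\Omega\|_\infty$ times a uniformly bounded mass. The second is a weak-$*$ term with the fixed continuous test function $\phi\,\delta_\Omega$. Both tend to zero, which gives the convergence of the first term in Step 1.

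\emph{Step 3 (gradient term).} Since $\nabla\delta_\varepsilon\to\nabla\delta_\Omega$ in $L^1_{\mathrm{loc}}$, hence almost everywhere along a subsequence, and $|\nabla\delta_\varepsilon|\le1$, dominated convergence gives $|\nabla\delta_\varepsilon|^2\to|\nabla\delta_\Omega|^2$ in $L^1_{\mathrm{loc}}$. It remains to check $|\nabla\delta_\Omega|=1$ at every differentiability point $x\in\Omega$; by Rademacher's theorem these points have full measure. Let $q$ be a nearest point of $\R^n\setminus\Omega$ to $x$, so $|x-q|=\delta_\Omega(x)$. Along the segment from $x$ to $q$ the function $\delta_\Omega$ decreases exactly at unit rate, because $\delta_\Omega(x+t(q-x)/|q-x|)=\delta_\Omega(x)-t$ for small $t\ge0$. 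Hence the directional derivative in that unit direction is $-1$, and together with the $1$-Lipschitz bound this forces $|\nabla\delta_\Omega(x)|=1$. Therefore the second term converges to $\mathcal L^n\llcorner\Omega$, which proves \eqref{eq:distributional-distance-curvature}.

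The main obstacle is Step 2, namely the locally uniform mass bound on $D^2\delta_\varepsilon$ that justifies passing to the limit in the product $\delta_\varepsilon\operatorname{tr}_\Pi\Hess\delta_\varepsilon$. I would obtain it from semiconcavity. On $V$ one has $\Hess\delta_\varepsilon\le C_V\,\mathrm{Id}$, so the negative part of $\operatorname{tr}_\Pi\Hess\delta_\varepsilon$ is controlled by the positive part. Its total mass is then bounded through an integration by parts against a cutoff, using $|\nabla\delta_\varepsilon|\le1$.
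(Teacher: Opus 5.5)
Your proposal is correct and follows essentially the same route as the paper. Both arguments approximate $\delta_\Omega$ by smooth positive functions (you mollify locally on $V\Subset\Omega$, the paper uses a Whitney decomposition with a partition of unity) and apply the smooth identity $K=u\operatorname{tr}_\Pi(\Hess u)-|\nabla u|^2$ from Lemma~\ref{lem:conformal-curvature}. Both then pass to the vague limit via uniform convergence, weak-$*$ convergence of the Hessians and $L^1_{\mathrm{loc}}$ convergence of the gradients, and conclude with $|\nabla\delta_\Omega|=1$ almost everywhere.

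You also spell out details the paper leaves implicit: the splitting for the product $\delta_\varepsilon\operatorname{tr}_\Pi\Hess\delta_\varepsilon$ and the eikonal identity. The mass bound you flag as the main obstacle is in fact automatic, since $|(D^2\delta_\Omega)*\eta_\varepsilon|(V)\le|D^2\delta_\Omega|(V_\varepsilon)$ for the closed $\varepsilon$-neighbourhood $V_\varepsilon\Subset\Omega$.
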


\begin{proof}
For the proof, let $f = \delta_{\Omega}$. Since $f$ is locally $\mathcal{DC}$, its distributional derivative is a BV map (see e.g. \cite{Ambrosio-Bertrand-2018-dc-calculus}). Then standard approximation using a Whitney decomposition $\{ Q_i \}_{ i \in N }$ of $\Omega$, a partition of unity, and convolution argument leads to the following sequence of smooth functions $( f_j )_{ j \geq 1 }$: $f_j > 0$ in $\Omega$ and for $F = -\log(f)$ and $F_j = -\log( f_j )$, the following holds.
\begin{enumerate}
    \item $F_j \rightarrow F$ uniformly and $Df_j \rightarrow Df$ in $L^{2}$ in every compact set $K \subset \Omega$;
    \item $\Hess f_j \mathcal{L}^{n} \rightharpoonup \Hess f$ in the vague convergence of (matrix-valued) Radon measures on $\Omega$.
\end{enumerate}
We are ready to proceed with the proof. We apply Lemma~\ref{lem:conformal-curvature} with $F_j$.  By the chain rule and the convention for the tensor product,
\begin{equation*}
\Hess F_j
=
-\frac{\Hess f_j}{f_j}
+\frac{\nabla f_j \otimes\nabla f_j}
{f_j^2},
\quad
\nabla F_j
=
-\frac{\nabla f_j }{f_j}.
\end{equation*}

Let $e_1,e_2$ be a Euclidean orthonormal basis of $\Pi$. By
definition of the tensor product,
\begin{align*}
\operatorname{tr}_{\Pi}
\bigl(\nabla f_j \otimes \nabla f_j \bigr)_x
&=
\sum_{i=1}^2
\langle\nabla f_j(x),e_i\rangle^2
=
\left|\pi_{\Pi}\bigl(\nabla f_j(x)\bigr)\right|^2.
\end{align*}
Since $e^{2F_j}=f_j^{-2}$, the conformal-curvature formula
\eqref{eq:general-conformal-curvature} therefore gives
\begin{align*}
    \frac{ 1 }{ f_j^2(x) }K_{g_j}(x,\Pi)
    &=
    \frac{1}{f_j(x)}
    \operatorname{tr}_{\Pi}(\Hess f_j)_x
    -
    \frac{1}{f_j^2(x)} |\nabla f_j(x)|^2.
\end{align*}
Multiplying by
$f_j(x)^2$ proves
\begin{align*}
    K_{g_j}(x,\Pi)
    =
    f_{j}(x)\operatorname{tr}_{\Pi}(\Hess f_j)_x
    -
    \left|\nabla f_j(x)\right|^2.
\end{align*}
Letting $j \rightarrow \infty$, these converge vaguely to
\begin{align*}
    K_{g}( \cdot, \Pi )
    =
    f \operatorname{tr}_{\Pi}(\Hess f)
    -
    \mathcal{L}^n\llcorner{\Omega},
\end{align*}
where we used the fact that $|\nabla f| = 1$ almost everywhere in $\Omega$. This shows the claim.
\end{proof}

\subsection{Curvature lower bounds}
    In this section, we prove Theorem~\ref{theorem-cbb-equivalent-to-concavity}.

    We first prove '$(1) \implies (4)$': Suppose that $\Omega \subsetneq \R^n$ is concave, i.e., $C = \R^n \setminus \Omega$ is convex. Then, the nearest point projection $\pi_{C} \colon \R^n \to C$ is $1$-Lipschitz. Furthermore, since
    \begin{align*}
        \delta_{\Omega}(x) = |x-\pi_{C}(x)|
        \quad\text{for every $x \in \R^n$,}
    \end{align*}
    it holds that $\delta_{\Omega}$ is convex. At a differentiability point $x \in \Omega$ of $\delta_{\Omega}$, a direct computation shows
    \begin{align}\label{equation-projection-term}
        \nabla \delta_{\Omega}(x) = \frac{ x-\pi_{C}(x) }{ |x-\pi_{C}(x)| }.
    \end{align}
    This implies that $\delta_\Omega \in \mathcal{C}^{1,1}_{\text{loc}}(\Omega)$ because the right-hand side is locally Lipschitz (see e.g.~\cite[Corollary, page~251]{clarke-1975-generalized-gradients-and-applications}). The convexity of $\delta_{\Omega}$ implies that its distributional Hessian is positive semi-definite, so \eqref{eq:distributional-distance-curvature} immediately shows that the sectional curvature on each plane is bounded from below by $-1$. We derive a slightly stronger identity for the \emph{unnormalized curvature} in order to apply the approximation result in \cite[Theorem~3.1]{eros-kunzinger-obanyan-vardabasso-2026-distributional-sectional-curvature-bounds-for-riemannian-metrics-of-low-regularity}, thereby obtaining a local Alexandrov curvature lower bound of $-1$. We use the approximation from Lemma~\ref{lem:smooth-distance-curvature} to fill in the gap. Since $g=\delta_{\Omega}^{-2}g_{\mathrm{euc}}$, the induced metric on bivectors satisfies
$$
    g(X \wedge Y, X \wedge Y)=\delta_{\Omega}^{-4}(|X|^2|Y|^2-\langle X,Y\rangle^2).
$$
Using the notation from \cite{eros-kunzinger-obanyan-vardabasso-2026-distributional-sectional-curvature-bounds-for-riemannian-metrics-of-low-regularity}, we establish the following lemma for the unnormalized Riemannian curvature tensor.
\begin{lemma}\label{lemma-unnormalized-curvature-identity}
For every smooth vector fields $X$ and $Y$ on $\Omega$, the bivector $V = X \wedge Y$ satisfies the identity
\begin{gather}\label{eq:unnormalized-curvature-identity}
    \mathcal R_g(V,V)
+g(V,V)\,\mathcal L^n\\ \nonumber
\quad =
\delta_{\Omega}^{-3}\Big(
|Y|^2\operatorname{Hess}\delta_{\Omega}(X,X)
+|X|^2\operatorname{Hess}\delta_{\Omega}(Y,Y)
-2\langle X,Y\rangle\operatorname{Hess}\delta_{\Omega}(X,Y)
\Big)
\end{gather}
as Radon measures.
\end{lemma}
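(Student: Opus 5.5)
The plan is to prove \eqref{eq:unnormalized-curvature-identity} first for smooth positive functions in place of $\delta_\Omega$, and then pass to the limit using the approximation from Lemma~\ref{lem:smooth-distance-curvature}. Throughout, $f=\delta_\Omega$, which lies in $\mathcal C^{1,1}_{\text{loc}}(\Omega)$ under the standing concavity assumption. Thus $g=f^{-2}g_{\euc}$ is a $\mathcal C^{1,1}_{\text{loc}}$ metric, and its distributional Riemann tensor in the sense of \cite{eros-kunzinger-obanyan-vardabasso-2026-distributional-sectional-curvature-bounds-for-riemannian-metrics-of-low-regularity} is well defined. Its Christoffel symbols are locally Lipschitz, so $\mathcal R_g$ is in fact an $L^\infty_{\text{loc}}$ tensor.

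\textbf{Smooth step.} Let $f_j>0$ be smooth and set $g_j=f_j^{-2}g_{\euc}$. The proof of Lemma~\ref{lem:smooth-distance-curvature} already gives, at every point where $X\wedge Y\neq 0$,
\begin{equation*}
K_{g_j}(x,\Pi)=f_j\operatorname{tr}_\Pi(\Hess f_j)-|\nabla f_j|^2 .
\end{equation*}
Multiply this by
\begin{equation*}
g_j(V,V)=f_j^{-4}\bigl(|X|^2|Y|^2-\langle X,Y\rangle^2\bigr),
\end{equation*}
and use $\mathcal R_{g_j}(V,V)=K_{g_j}\,g_j(V,V)$. The key algebraic identity is that, for a symmetric bilinear form $H$ and $\Pi=\operatorname{span}\{X,Y\}$,
\begin{equation*}
\operatorname{tr}_\Pi(H)\,\bigl(|X|^2|Y|^2-\langle X,Y\rangle^2\bigr)=|Y|^2H(X,X)+|X|^2H(Y,Y)-2\langle X,Y\rangle H(X,Y).
\end{equation*}
One checks this by Gram--Schmidt: both sides are invariant under $Y\mapsto Y+tX$ and scale correctly in $X$ and $Y$. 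It also holds trivially (both sides vanish) when $X\wedge Y=0$. Combining these gives the pointwise identity
\begin{equation*}
\mathcal R_{g_j}(V,V)+|\nabla f_j|^2g_j(V,V)=f_j^{-3}\bigl(|Y|^2\Hess f_j(X,X)+|X|^2\Hess f_j(Y,Y)-2\langle X,Y\rangle\Hess f_j(X,Y)\bigr).
\end{equation*}

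\textbf{Limit step.} Take $f_j$ from the Whitney/partition-of-unity mollification in Lemma~\ref{lem:smooth-distance-curvature}. Since $f\in\mathcal C^{1,1}_{\text{loc}}$, this gives more than stated there:
\begin{itemize}
\item $f_j\to f$ in $\mathcal C^1_{\text{loc}}$;
\item $\Hess f_j$ is locally uniformly bounded and converges to $\Hess f$ weakly-$*$ in $L^\infty_{\text{loc}}$, in particular vaguely.
\end{itemize}
We then treat the three terms separately.
\begin{itemize}
\item The coefficients $f_j^{-3}|Y|^2$, $f_j^{-3}|X|^2$ and $f_j^{-3}\langle X,Y\rangle$ converge locally uniformly. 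A locally uniformly convergent sequence of continuous functions times a vaguely convergent sequence of locally bounded measures converges vaguely, so the right-hand side converges to the right-hand side of \eqref{eq:unnormalized-curvature-identity}.
\item The term $|\nabla f_j|^2g_j(V,V)$ converges locally uniformly to $|\nabla f|^2g(V,V)=g(V,V)$, because $|\nabla\delta_\Omega|=1$ everywhere in $\Omega$ by \eqref{equation-projection-term}.
\item For the curvature term, $g_j\to g$ in $\mathcal C^1_{\text{loc}}$, so the Christoffel symbols converge locally uniformly. Writing $\mathcal R=\partial\Gamma+\Gamma\ast\Gamma$, we get $\mathcal R_{g_j}(V,V)\to\mathcal R_g(V,V)$ in distributions: the derivative term is tested against a smooth compactly supported function and integrated by parts. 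Since the limit is a locally bounded function, the convergence also holds as Radon measures.
\end{itemize}

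\textbf{Main obstacle.} I expect the hardest part to be this last point: matching the precise definition of the distributional unnormalized curvature $\mathcal R_g(V,V)$ in \cite{eros-kunzinger-obanyan-vardabasso-2026-distributional-sectional-curvature-bounds-for-riemannian-metrics-of-low-regularity}, including its conventions on bivectors, with the limit of the smooth curvatures, and checking its continuity under $\mathcal C^1_{\text{loc}}$ convergence of metrics. I would handle it by writing their definition in coordinates and verifying stability term by term, as sketched above. The algebraic identity and the remaining convergences are routine.
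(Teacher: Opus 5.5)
Your proposal is correct and follows essentially the same route as the paper: apply the smooth conformal curvature formula to $g_j=f_j^{-2}g_{\euc}$ from the approximation in Lemma~\ref{lem:smooth-distance-curvature}, then pass to the limit separately in the gradient term and the weighted Hessian term. Your use of $\delta_\Omega\in\mathcal C^{1,1}_{\text{loc}}$ (so that $g_j\to g$ in $\mathcal C^1_{\text{loc}}$ with locally bounded Hessians) makes explicit why $\lim_j\mathcal R_{g_j}(V,V)$ is the distributional curvature of $g$, a step the paper asserts without detail.
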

\begin{proof}
    We use the notation from the proof of Lemma~\ref{lem:smooth-distance-curvature}. In particular, $f = \delta_{\Omega}$. We apply the smooth conformal-curvature formula to the approximating metrics $g_j=f_j^{-2}g_{\mathrm{euc}}$. The formula for $\mathcal R_{g_j}(V,V)$ contains Hessian and gradient contributions. The gradient term is
$$
-f_j^{-4}
\bigl(|X|^2|Y|^2-\langle X,Y\rangle^2\bigr)
|\nabla f_j|^2\,\mathcal L^n.
$$
The approximation in Lemma~\ref{lem:smooth-distance-curvature} gives $f_j\to f>0$ locally uniformly and $\nabla f_j\to\nabla f$ strongly in $L^2_{\mathrm{loc}}$, hence  $|\nabla f_j|^2/f_j^4\to|\nabla f|^2/f^4=1/f^4$ in $L^1_{\mathrm{loc}}$. Thus the gradient contribution converges to $-g(V,V)\mathcal L^n$. On the other hand, the weighted Hessian contribution vaguely converge as Radon measures. These limits agree with the corresponding contraction of the distributional curvature tensor of $g$. We conclude the validity of \eqref{eq:unnormalized-curvature-identity}, noting that the smooth coefficients involving $X,Y$ cause no additional difficulty in these measure limits.
\end{proof}
Using \eqref{eq:unnormalized-curvature-identity}, we deduce
$$
\mathcal R_g(X\wedge Y,X\wedge Y)
\ge
-g(X\wedge Y,X\wedge Y)\,\mathcal L^n
$$
using the algebraic identity
$$
\begin{aligned}
&|Y|^2\operatorname{Hess}\delta_{\Omega}(X,X)
+|X|^2\operatorname{Hess}\delta_{\Omega}(Y,Y)
-2\langle X,Y\rangle\operatorname{Hess}\delta_{\Omega}(X,Y)\\
&\qquad =
\sum_{i=1}^n
\operatorname{Hess}\delta_{\Omega}
\bigl(Y^iX-X^iY,\;Y^iX-X^iY\bigr)
\ge0,
\end{aligned}
$$
where $X^i$ and $Y^i$ denote the Euclidean components of $X$ and $Y$, respectively.
A local Alexandrov curvature lower bound of $-1$ follows from \cite[Theorem~3.1]{eros-kunzinger-obanyan-vardabasso-2026-distributional-sectional-curvature-bounds-for-riemannian-metrics-of-low-regularity}. Hence $(\Omega, k_\Omega)$ is $\mathrm{CBB}(-1)$ by the Globalization Theorem \cite{AlexanderKapovitchPetrunin}.

    Clearly '$(4) \implies (3)$' holds. Next, we prove '$(3)\implies (2)$'. Suppose that $( \Omega, k_\Omega )$ satisfies a local variable Alexandrov curvature lower bound. By localizing to a small enough ball, $U = B_{k_\Omega}(p,r) \subset \Omega$, we have both an Alexandrov curvature upper and lower bounds on $( U, k_\Omega )$. For such bounded curvature spaces, Berestovski\u{\i} constructed in \cite{Berestovskij-1976-introduction-of-a-riemann-structure-into-certain-metric-spaces} distance coordinates and a continuous metric tensor $g$ with the following properties: the transition maps are locally bi-Lipschitz and in these coordinates, the metric $g$ has locally Lipschitz coefficients and the distance $k_\Omega$ is locally isometric to the Riemannian distance induced by $g$. Subsequently, Nikolaev constructed in \cite{nikolaev-1983-smoothness-of-the-metric-of-spaces-with-bilaterally-bounded-curvature-in-the-sense-of-alexandrov} harmonic coordinates whose transition maps are (at least) $\mathcal{C}^{2,\alpha}_{\text{loc}}$ and $g$ is $\mathcal{C}^{1,\alpha}_{\text{loc}}$ in the harmonic coordinates. To prove (2), we establish the following. 
    \begin{lemma}\label{lemma-regularity-of-the-metric}
    The metric $g$ is $\mathcal{C}^{1}$-regular with respect to the identity chart and $g = \delta_{\Omega}^{-2}g_{\euc}$.
    \end{lemma}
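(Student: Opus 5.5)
Within the ball $U=B_{k_\Omega}(p,r)$ the plan is to compare two Riemannian structures on the same metric space $(U,k_\Omega)$. The first is the Berestovski\u{\i}--Nikolaev metric $g$, which is $\mathcal{C}^{1,\alpha}_{\text{loc}}$ in harmonic coordinates. The second is the quasihyperbolic tensor $\delta_\Omega^{-2}g_{\euc}$ in the identity chart, which is continuous since $\delta_\Omega$ is $1$-Lipschitz. The bridge between them is the identity map $\iota\colon (U,\text{identity chart})\to (U,\text{harmonic chart})$, which is an isometry of $k_\Omega$. I will show that $\iota$ is a $\mathcal{C}^{1}$ conformal diffeomorphism. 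Iwaniec's regularity theory then upgrades it to a map of the same regularity class as the metrics, which makes $g$ and $\delta_\Omega^{-2}g_{\euc}$ coincide in the identity chart.

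\emph{Step 1: bi-Lipschitz regularity.} Since $\delta_\Omega$ is bounded above and below on compact subsets, $k_\Omega$ is locally bi-Lipschitz to the Euclidean distance by \eqref{eq:qh-estimate-1} and the definition. Harmonic coordinates are locally bi-Lipschitz to $k_\Omega$. Hence $\iota$ is locally bi-Lipschitz, and in particular it lies in $W^{1,\infty}_{\text{loc}}$ and is differentiable almost everywhere by Rademacher.

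\emph{Step 2: $\iota$ is a.e.\ conformal.} At a point $x$ where $\iota$ is differentiable, I compare the metric derivatives of $k_\Omega$ in the two charts. In the identity chart, the definition of $k_\Omega$ together with the continuity of $\delta_\Omega$ gives
\begin{equation*}
\lim_{t\to0}\frac{k_\Omega(x,x+tv)}{|t|}=\frac{|v|}{\delta_\Omega(x)}.
\end{equation*}
In the harmonic chart, the analogous limit equals $|D\iota_x v|_{g}$, because $g$ is continuous and induces $k_\Omega$. Therefore
\begin{equation*}
(D\iota_x)^{*}g=\delta_\Omega(x)^{-2}g_{\euc}
\end{equation*}
almost everywhere. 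This says precisely that $\iota$ is a $W^{1,\infty}_{\text{loc}}$ solution of the Beltrami-type system relating the measurable (in fact continuous) conformal structures $[g_{\euc}]$ and $[g]$. Equivalently, it is $1$-quasiconformal between a smooth conformal structure and a $\mathcal{C}^{1,\alpha}$ one.

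\emph{Step 3: regularity via Iwaniec.} Both conformal structures are Hölder continuous: $g_{\euc}$ is smooth and $g$ is $\mathcal{C}^{1,\alpha}$. Iwaniec's theorem \cite{iwaniec-1982-regularity-theorems-for-solutions-of-partial-differential-equations-for-quasiconformal-mappings-in-several-dimensions} on angle-preserving maps between Riemannian manifolds with $\mathcal{C}^{k,\alpha}$ metrics then gives that $\iota$ is $\mathcal{C}^{1,\alpha}$. Its inverse is $\mathcal{C}^{1,\alpha}$ by symmetry, and the Jacobian is nonvanishing because $\iota$ is bi-Lipschitz. For $n=2$ I would instead use the classical measurable Riemann mapping regularity with Hölder coefficients. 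Once $\iota$ is a $\mathcal{C}^{1}$ diffeomorphism, the pulled-back tensor $\iota^*g$ is continuous. It equals $\delta_\Omega^{-2}g_{\euc}$ almost everywhere, and hence everywhere by continuity, which proves the identity $g=\delta_\Omega^{-2}g_{\euc}$ in the identity chart.

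\emph{Step 4: $\mathcal{C}^1$ regularity of $g$ in the identity chart.} I will bootstrap. In the identity chart $g$ is conformally flat with conformal factor $\delta_\Omega^{-2}$, so the change of coordinates is a conformal map between the flat structure and $g$. If the conformal factor is $\mathcal{C}^{0,\alpha}$, the Iwaniec regularity yields $\mathcal{C}^{1,\alpha}$ for the map. Feeding in $g\in\mathcal{C}^{1,\alpha}$ on the harmonic side should give $\iota\in\mathcal{C}^{2,\alpha}$, since conformal maps gain one derivative over the metrics. Then $\iota^*g$ is $\mathcal{C}^{1,\alpha}$, so $\delta_\Omega^{-2}$ and $\delta_\Omega$ are $\mathcal{C}^1$.

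I expect Step 4 to be the main obstacle: obtaining the extra derivative, namely $\mathcal{C}^{2,\alpha}$ regularity of $\iota$ given only $\mathcal{C}^{1,\alpha}$ metrics, from the precise statement in Iwaniec. If that fails, I would fall back on the Liouville-type structure. A conformal map from a flat domain whose pullback metric $\delta^{-2}g_{\euc}$ has a Lipschitz factor satisfies an elliptic system for $\log\delta$ (the conformal curvature equation of Lemma~\ref{lem:conformal-curvature}), and bounded curvature then gives $W^{2,p}$ control of $\log\delta_\Omega$.
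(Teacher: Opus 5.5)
Your proposal is correct and follows essentially the same route as the paper. The transition map between a Nikolaev harmonic chart and the identity chart is locally bi-Lipschitz and a.e.\ angle-preserving, and Iwaniec's regularity theorem (see also Liimatainen--Salo: a bi-Lipschitz conformal map between $\mathcal{C}^{1,\alpha}$ metrics is $\mathcal{C}^{2,\alpha}$) then gives $\mathcal{C}^{2,\alpha}_{\text{loc}}$ regularity, so $\delta_\Omega^{-2}g_{\euc}$ is $\mathcal{C}^{1,\alpha}_{\text{loc}}$. The extra derivative you flagged as the main obstacle in Step 4 is exactly the statement the paper invokes, so you do not need your separate $\mathcal{C}^{1,\alpha}$ step or the curvature-equation fallback.
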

    \begin{proof}
    Fix a harmonic chart $\varphi \colon \R^n \supset W' \to ( W, g ) \subset ( U, g )$ for a domain $W$, and note by construction that the Riemannian metric $d_{ \varphi^{*}g }$ defined by $\varphi^{*}g$ makes $\varphi \colon ( W', d_{ \varphi^{*}g } ) \to ( W, k_{\Omega} )$ a length-preserving local isometry. If $\iota \colon ( W, k_\Omega ) \to W$ denotes the identity map, it follows that $F = \iota \circ \varphi \colon ( W', d_{\varphi^{*}g} ) \to W$ is locally bi-Lipschitz. By construction, we conclude that at each differentiability point of $F$, the differential of $F$ is angle-preserving and thus the distributional theory of angle-preserving maps by Iwaniec \cite{iwaniec-1982-regularity-theorems-for-solutions-of-partial-differential-equations-for-quasiconformal-mappings-in-several-dimensions} (see also \cite{liimatainen-salo-2014-n-harmonic-coordinates-and-the-regularity-of-conformal-mappings,julin-liimatainen-salo-2017-p-harmonic-coordinates-for-holder-metrics-and-applications}) implies $\mathcal{C}^{2,\alpha}_{\text{loc}}$-regularity of $F$. Consequently, $g = \delta_{\Omega}^{-2}g_{\euc}$ is $\mathcal{C}^{1,\alpha}_{\text{loc}}$-regular in $W$, implying the differentiability of~$\delta_{\Omega}$ in $W$ and on $\Omega$ by a covering argument.
    \end{proof}
    Lastly, we establish '$(2) \implies (1)$': We use the characterization of convexity in terms of the nearest point projection being single-valued. To conclude this, in case $x \in \Omega$ is a differentiability point of $\delta_{\Omega}$ and $z \in \partial \Omega$ satisfies $|x-z| = \delta_{\Omega}(x,z)$, as $|y-z| = \delta_{\Omega}(y)$ along the Euclidean segment from $z$ to $x$, we see that $\delta_{\Omega}$ satisfies \eqref{equation-projection-term} with $\pi_{\R^n \setminus \Omega}(x)$ replaced by $z$. This characterization implies that $z$ is unique at differentiability points of $\delta_\Omega$. Hence the metric projection $\pi_{\R^n \setminus \Omega} \colon \Omega \to \partial \Omega$ is well-defined at each point and the concavity of $\Omega$ follows.
\begin{remark}\label{remark-non-branching-geodesics-is-not-equivalent-to-differentiability}
    While an Alexandrov curvature lower bound for a quasihyperbolic domain $( \Omega, k_\Omega )$ implies that quasihyperbolic geodesics cannot branch, the converse is not true. Indeed, if $\Omega$ is the Euclidean unit ball, the quasihyperbolic geodesics in $(\Omega,k_\Omega)$ do not branch even though $(\Omega,k_\Omega)$ does not have an Alexandrov curvature lower bound. To rule out branching, it suffices to notice that the quasihyperbolic metric is smooth outside the origin and any quasihyperbolic geodesic passing through the origin is radial. Since $\Omega$ is not concave, an Alexandrov curvature lower bound does not hold by Theorem~\ref{theorem-cbb-equivalent-to-concavity}.
\end{remark}

\subsection{Quasihyperbolic metrics on convex domains} To prove Theorem~\ref{thm:convex-cat0}, we first establish the following result.
\begin{proposition}\label{prop:convex-equivalence}
Let $\Omega\subsetneq\R^n$ be a domain. Then the following are
equivalent
\begin{enumerate}
    \item $\Omega$ is convex;
    \item $-\log\delta_\Omega$ is convex on $\Omega$.
\end{enumerate}
\end{proposition}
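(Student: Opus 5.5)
For (1) $\Rightarrow$ (2), I would write $\delta_\Omega$ as an infimum over supporting half-spaces. Assume $\Omega$ is convex. Every $z\in\partial\Omega$ has a supporting hyperplane, with unit inner normal $\nu$, such that $\Omega\subset\{x\colon\langle x-z,\nu\rangle>0\}$. Then
\begin{equation*}
\delta_\Omega(x)=\inf_{(z,\nu)}\langle x-z,\nu\rangle ,
\end{equation*}
where the infimum runs over all such supporting pairs. The inequality ``$\geq$'' holds because each half-space contains $\Omega$, so the distance from $x$ to its boundary hyperplane is at least $\delta_\Omega(x)$. For ``$\leq$'', take a nearest point $z\in\partial\Omega$ to $x$; the ball $B(x,\delta_\Omega(x))\subset\Omega$ touches $\partial\Omega$ at $z$, so the hyperplane through $z$ orthogonal to $x-z$ supports the convex set $\Omega$. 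Hence $\delta_\Omega$ is an infimum of affine functions that are positive on $\Omega$, and therefore
\begin{equation*}
-\log\delta_\Omega=\sup_{(z,\nu)}\bigl(-\log\langle x-z,\nu\rangle\bigr).
\end{equation*}
Each function $-\log$ of a positive affine function is convex, since $-\log$ is convex and nonincreasing and the inner function is affine. A pointwise supremum of convex functions is convex, which gives (2). This direction is routine.

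For (2) $\Rightarrow$ (1), I would argue by contradiction. Suppose $\Omega$ is not convex. Since $\Omega$ is open and connected, there are points $a,b\in\Omega$ whose segment leaves $\Omega$. I would reduce to a local configuration by a standard argument: if every point of $\partial\Omega$ had a neighbourhood $B$ with $B\cap\Omega$ convex, then a connected open set would be convex (Tietze--Nakajima). So there is a boundary point $z_0$ and nearby points $a,b\in\Omega$ close to $z_0$ with $[a,b]\not\subset\Omega$; by local connectedness we may also join them by a short path in $\Omega$. Sliding the segment along this path, I find a segment $[a,b]$ with endpoints in $\Omega$ that meets $\R^n\setminus\Omega$ only in a compact subset of its interior. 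I can choose it to be a \emph{first contact}: a limit of segments lying entirely in $\Omega$, so that $\delta_\Omega$ restricted to nearby parallel segments tends to zero at some interior parameter while staying bounded below near the endpoints.

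Now I restrict $u=-\log\delta_\Omega$ to a nearby parallel segment $[a',b']\subset\Omega$. By (2), $u$ is convex along it, so $u\leq\max\{u(a'),u(b')\}\leq C$, where $C$ is uniform in the family because $\delta_\Omega$ is bounded below near $a$ and $b$. This says $\delta_\Omega\geq e^{-C}>0$ along every such segment. Letting the segments converge to the contact segment then contradicts the existence of a point at which $\delta_\Omega\to0$. Hence $\Omega$ is convex.

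The main obstacle I expect is the construction of the contact segment. Concretely, I need a continuous one-parameter family of segments $[a_t,b_t]$, with endpoints in a compact subset of $\Omega$, such that $[a_t,b_t]\subset\Omega$ for $t<1$ and $[a_1,b_1]$ meets $\partial\Omega$. I would try to take $a_t$ and $b_t$ moving along a polygonal path in $\Omega$ from a point to itself, shrinking toward $[a,b]$, and let $t$ be the first time the segment touches the complement. Closedness of $\R^n\setminus\Omega$ guarantees that this first time exists and that its segment genuinely meets $\partial\Omega$. With this construction in place, the convexity argument above closes without the local reduction, since it suffices that $\delta_\Omega$ is continuous and positive on the compact set swept out by the endpoints.
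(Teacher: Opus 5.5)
Your proof is correct, but both directions take a different route from the paper's.

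For (1)$\Rightarrow$(2), the paper proves concavity of $\delta_\Omega$ from the Minkowski identity $(1-t)B(x,r)+tB(y,s)=B((1-t)x+ty,(1-t)r+ts)\subset\Omega$. That argument uses nothing beyond the definition of convexity. You instead write $\delta_\Omega$ as an infimum of the affine functions $x\mapsto\langle x-z,\nu\rangle$ over supporting half-spaces. This costs the supporting hyperplane theorem and the nearest-point argument, but it gives the finer statement that $-\log\delta_\Omega$ is a supremum of half-space densities. Your labels ``$\geq$'' and ``$\leq$'' are swapped: the half-space argument gives $\delta_\Omega(x)\leq\inf$ and the nearest-point argument gives the reverse. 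Both inequalities are nevertheless proved.

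For (2)$\Rightarrow$(1), the paper's main text assumes the sublevel sets $\Omega_r=\{\delta_\Omega\geq r\}$ are convex. That is automatic only when convexity of $u$ is read in the standard sense that presupposes a convex domain. The remark after the proof treats the genuinely local version by applying the Tietze--Nakajima theorem to path components of $\Omega_r$. Your first-contact argument proves exactly this local version (convexity of $u$ only along segments contained in $\Omega$) by an elementary continuity argument, with no appeal to Tietze--Nakajima.

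Consequently, your initial reduction to a boundary point and the ``parallel'' segments can simply be dropped. The cleanest form of your construction is as follows. Fix $a$ and let $\sigma\colon[0,1]\to\Omega$ be a polygonal path from $a$ to $b$. Let $s^*$ be the least $s$ with $[a,\sigma(s)]\not\subset\Omega$; it exists and is positive because $\{s\colon[a,\sigma(s)]\subset\Omega\}$ is open and contains $0$. For $s<s^*$, convexity of $u$ along $[a,\sigma(s)]$ gives $\delta_\Omega\geq\min_{\sigma([0,1])}\delta_\Omega>0$ on that segment. By continuity of $\delta_\Omega$, the same bound holds on $[a,\sigma(s^*)]$, which is a contradiction.
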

\begin{proof}
We prove '$(1)\implies (2)\implies (1)$'.

We prove '$(1)\implies (2)$' first using a standard argument. Let $x,y\in\Omega$ and $t\in(0,1)$. For
$0<r<\delta_\Omega(x)$ and $0<s<\delta_\Omega(y)$, convexity of
$\Omega$ gives
\begin{equation*}
 (1-t)B(x,r)+tB(y,s)
 =B\bigl((1-t)x+ty,(1-t)r+ts\bigr)\subset\Omega,
\end{equation*}
where we use standard Minkowski summation $A+B = \{ a+b \colon (a,b) \in A \times B \}$. Letting $r\uparrow\delta_\Omega(x)$ and
$s\uparrow\delta_\Omega(y)$ yields
\begin{equation*}
 \delta_\Omega((1-t)x+ty)
 \geq(1-t)\delta_\Omega(x)+t\delta_\Omega(y).
\end{equation*}
Thus $\delta_\Omega$ is concave. Since $-\log$ is decreasing and
convex,
\begin{align*}
 -\log\delta_\Omega((1-t)x+ty)
 &\leq-\log\bigl((1-t)\delta_\Omega(x)
                  +t\delta_\Omega(y)\bigr)\\
 &\leq(1-t)(-\log\delta_\Omega(x))
       +t(-\log\delta_\Omega(y)).
\end{align*}
Hence $-\log\delta_\Omega$ is convex on $\Omega$.

We finish the proof with '$(2)\implies (1)$'. Set $u=-\log\delta_\Omega$ and, for $r>0$, define 
\begin{equation*}
\Omega_r
=
\{x\in\R^n \colon\delta_\Omega(x)\geq r\}
=
\{x\in\Omega\colon u(x)\leq-\log r\}.
\end{equation*}
Consider $x,y\in\Omega$. Since $\Omega_{r}$ is convex as a sublevel set of a convex function, the Euclidean segment $[x,y]$ is contained in $\Omega_r \subset \Omega$ for small enough $r$. Hence $\Omega$ is convex. The proof is complete.    
\end{proof}

\begin{remark}
    By a slight modification of the proof of Proposition~\ref{prop:convex-equivalence}, the local convexity of $u = -\log \delta_{\Omega}$ improves to convexity of $\Omega$. Indeed, each pair $x, y \in \Omega$ is contained in the same path-connected component $C_r$ of some closed sublevel set $\Omega_r = \{ x \in \Omega \colon u(x) \leq -\log r \}$. The local convexity of $u$ implies that $C_r$ is locally convex which, by the Tietze–Nakajima theorem \cite{tietze-1928,Nakajima-1928}, implies that $C_r$ is convex. In particular, the convexity of $\Omega$ follows by the arbitrariness of $r$.
\end{remark}

\begin{proof}[Proof of Theorem \ref{thm:convex-cat0}]
Fix $p\in\Omega$, and, for $j\in\mathbb{N}$, define $\Omega_j=\Omega\cap B(p,j)$. Then $(\Omega_j)_j$ is an increasing sequence of bounded convex domains containing $p$, and $\bigcup_{j=1}^{\infty}\Omega_j=\Omega$. Moreover, for $x\in\Omega_j$,
\begin{equation}
\delta_{\Omega_j}(x)
=
\min\{\delta_\Omega(x),j-|x-p|\}.
\label{eq:truncated-boundary-distance}
\end{equation}

Since $\Omega_j$ is convex, the function $f_j=-\log\delta_j$ is continuous and convex by Proposition~\ref{prop:convex-equivalence}, and bounded below. Since $e^{f_j}=1/\delta_{\Omega_j}$, the conformally deformed metric by the density $e^{f_j}$ is precisely
$k_{\Omega_j}$. Therefore, \cite[Theorem~1]{LytchakStadler} gives that
$(\Omega_j, k_{\Omega_j})$ is $\CAT(0)$. We next prove that
$$
(\Omega_j,k_j,p)
\xrightarrow{\mathrm{pGH}}
(\Omega,k_\Omega,p).
$$
It suffices to show that, for each $R>0$, the identity map $K_{R} \coloneqq \overline{B}_{k_\Omega}(p,R) \to \overline{B}_{k_{\Omega_j}}(p,R)$ is a surjective isometry for large enough $j$. To this end, the quasihyperbolic domain is proper, so $K_{3R}$ is a compact subset
of $\Omega$. Hence, for all sufficiently large $j$,
$$
j>
\max_{x\in K_{3R}}
\bigl(|x-p|+\delta_\Omega(x)\bigr).
$$
By \eqref{eq:truncated-boundary-distance}, it follows that $\delta_{\Omega_j}=\delta_{\Omega}$ on $K_{3R}$. Since $\Omega_j\subset\Omega$ and $\delta_{\Omega_j}\leq\delta_\Omega$, we have
\begin{equation}
k_\Omega(x,y)\leq k_{\Omega_j}(x,y),
\quad x,y\in\Omega_j.
\label{eq:qhm-domain-monotonicity}
\end{equation}
By triangle inequality, if $x, y \in K_R$, the $k_\Omega$-geodesic $\gamma$ joining $x$ to $y$ lies in $K_{3R}$.
Thus, \eqref{eq:qhm-domain-monotonicity} holds with an equality. This implies the claim.
\end{proof}

\section{Consequences of Theorem~\ref{thm:universal-cat2}}\label{section-consequences-of-theorems}
\subsection{Conjectures of Väisälä}
For the remainder of the subsection, we fix a domain $\Omega \subsetneq\R^n$ and recall that $( \Omega, k_\Omega )$ is $\mathrm{CAT}(2)$.
\begin{theorem}[Uniqueness of geodesics]\label{theorem-uniqueness-of-geodesics}
Geodesics in $(\Omega, k_\Omega)$ are unique up to length $D_2 = \pi/\sqrt{2}$. That is, if $k_\Omega(x,y) < D_2$, then there exists a unique quasihyperbolic geodesic joining $x$ to $y$ in $\Omega$.
\end{theorem}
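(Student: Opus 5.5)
This follows from Theorem~\ref{thm:universal-cat2} together with standard $\mathrm{CAT}(\kappa)$ theory, plus the two-dimensional case. When $n \geq 3$, $(\Omega,k_\Omega)$ is $\mathrm{CAT}(2)$. When $n=2$, Remark~\ref{remark-small-balls-are-cat(0)-in-two-dimensions} (or Theorem~\ref{thm:finite-punctures-CAT-2} combined with stability) shows that $(\Omega,k_\Omega)$ is $\mathrm{CAT}(1)$. Since $\mathrm{CAT}(1)$ implies $\mathrm{CAT}(2)$, we may take $\kappa=2$ in every dimension.

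Existence is immediate. $(\Omega,k_\Omega)$ is a complete, locally compact length space, so it is proper and hence geodesic by the metric Hopf--Rinow theorem recalled in Section~\ref{subsec:metric-geometry}.

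For uniqueness, suppose $k_\Omega(x,y)<D_2$ and let $\gamma_1,\gamma_2$ be two quasihyperbolic geodesics from $x$ to $y$. Fix $t\in(0,\ell)$, where $\ell=k_\Omega(x,y)$, and put $z=\gamma_2(t)$. Consider the geodesic triangle $\Delta(x,y,z)$ whose side $[x,y]$ is $\gamma_1$ and whose sides $[x,z]$ and $[z,y]$ are the two subarcs of $\gamma_2$. Its perimeter is $2\ell<2D_2$, so Definition~\ref{def:CAT-kappa} applies to it. The triangle is degenerate: its side lengths are $t$, $\ell-t$ and $\ell$. Because $\ell<D_2$, the comparison triangle in $\mathbb M^2_2$ is a segment of length $\ell$. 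The comparison point $\bar z$ on $[\bar x,\bar z]$ therefore coincides with the comparison point of $\gamma_1(t)$ on $[\bar x,\bar y]$.

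The $1$-Lipschitz property of the comparison map then gives
\[
k_\Omega(\gamma_1(t),\gamma_2(t))\leq d_2(\bar z,\bar z)=0,
\]
so $\gamma_1(t)=\gamma_2(t)$ for every $t$. Hence $\gamma_1=\gamma_2$.

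The only point needing care is the degenerate comparison triangle. One must check that for perimeter below $2D_2$ the comparison triangle exists and is unique, which is where $\ell<D_2$ is used. This is standard (\cite[II.1.4]{BridsonHaefliger}), and in fact the uniqueness of geodesics of length $<D_\kappa$ is exactly the content of \cite[Proposition~II.1.4(1)]{BridsonHaefliger}. It is also built into the $\mathrm{CAT}(\kappa)$ definition recalled in the introduction. So I do not expect a genuine obstacle beyond citing it correctly.
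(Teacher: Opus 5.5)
Your proposal is correct and is essentially the paper's proof: the paper simply invokes the $\mathrm{CAT}(2)$ property and Bridson--Haefliger, Proposition II.1.4, whose proof is exactly your degenerate-triangle argument. (Read the $1$-Lipschitz condition as the comparison-point inequality $d(p,q)\le d_2(\bar p,\bar q)$, and note that $\bar z$ lies on $[\bar x,\bar y]$, not on $[\bar x,\bar z]$.) Your explicit handling of $n=2$ via $\mathrm{CAT}(1)\Rightarrow\mathrm{CAT}(2)$ spells out a point the paper leaves implicit.
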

\begin{proof}
    The claim holds in all $\mathrm{CAT}(2)$ spaces, see e.g. \cite[Chapter II, Proposition 1.4]{BridsonHaefliger}.
\end{proof}

\begin{theorem}[Extendability of geodesics]\label{theorem-extendability-of-geodesics}
Geodesics in $(\Omega,k_\Omega)$ are extendable: for every local geodesic $\gamma \colon [a,b] \to ( \Omega, k_\Omega )$ parametrized by arc length, there exists a local isometry $\bar{\gamma} \colon \R \to ( \Omega, k_\Omega )$ extending $\gamma$. Moreover, $\bar{\gamma}$ is length-minimizing on each segment of length $< D_2$.
\end{theorem}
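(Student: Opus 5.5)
The plan is to combine two ingredients: local uniqueness and properness coming from the $\CAT(2)$ structure, and the fact that $(\Omega,k_\Omega)$ is a topological $n$-manifold without boundary, locally $\CAT(2)$. In such spaces geodesics are locally extendable, which makes this a geodesically complete (GCBA) situation in the sense of Lytchak--Nagano.

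\textbf{Step 1 (local extension).} Fix $x\in\Omega$ and a small $r<D_2/2$, so that $\overline{B}_{k_\Omega}(x,r)$ is compact and convex. For $y\in B_{k_\Omega}(x,r)\setminus\{x\}$ we want $z$ with $k_\Omega(y,z)=\varepsilon$ and $k_\Omega(x,z)=k_\Omega(x,y)+\varepsilon$. I would derive this from the standard fact that in a locally $\CAT(\kappa)$ space the following are equivalent: local geodesic extendability at $y$, and non-contractibility of the punctured ball $B(y,\varepsilon)\setminus\{y\}$ (Bridson--Haefliger II.5.12). The $\CAT(2)$ property and Theorem~\ref{theorem-uniqueness-of-geodesics} give a geodesic retraction of $\overline{B}(y,\varepsilon)\setminus\{y\}$ onto the sphere $S(y,\varepsilon)$ along geodesics issuing from $y$. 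These geodesics vary continuously by Remark~\ref{rem:continuous-dependence}. Since $k_\Omega$ induces the Euclidean topology on $\Omega$ (by the estimates \eqref{eq:qh-estimate-1}), a small punctured neighbourhood of $y$ is homotopy equivalent to $\Sph^{n-1}$, and hence is not contractible. If the geodesic $[x,y]$ failed to extend past $y$, the argument of II.5.12 would give a contraction of the punctured ball $B(y,\varepsilon)\setminus\{y\}$ (radial retraction toward the geodesic from $y$ to $x$). This would contradict $H_{n-1}\neq0$.

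\textbf{Step 2 (globalization).} Given a local geodesic $\gamma\colon[a,b]\to\Omega$, let $T$ be the supremum of the times $t\ge b$ for which $\gamma$ extends to a local geodesic on $[a,t]$. By Step 1, extension by a segment is always possible. At a finite $T$, the extended curve is $1$-Lipschitz, so it converges as $t\to T$ because $(\Omega,k_\Omega)$ is proper and complete. Concatenating the curve with a short extension beyond the limit point, chosen using local uniqueness in a $\CAT(2)$ neighbourhood, gives a local geodesic past $T$, which is a contradiction. Hence $T=\infty$, and doing the same in the negative direction yields $\bar\gamma\colon\R\to\Omega$. A local geodesic is a local isometry, so $\bar\gamma$ is one.

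\textbf{Step 3 (minimality).} In a $\CAT(\kappa)$ space, every local geodesic of length $<D_\kappa$ is a geodesic (Bridson--Haefliger II.1.4(2)). Applying this with $\kappa=2$ to every subsegment of $\bar\gamma$ of length $<D_2$ shows that these subsegments are length-minimizing.

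The main obstacle is Step 1, specifically verifying the topological non-contractibility input. This needs the identification of the $k_\Omega$-topology with the Euclidean topology, and the check that the punctured metric ball deformation retracts onto something homotopic to $\Sph^{n-1}$. An alternative route for Step 1, if preferred, is to use the approximations $\Omega_i$ and $g_\beta$, which are complete Riemannian manifolds and hence geodesically complete. One would pass to limits of extended geodesics via Proposition~\ref{proposition-convergence-of-approximations}, using uniqueness below $D_2$ to identify the limit with an extension of $\gamma$.
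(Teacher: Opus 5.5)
Your proposal follows the paper's route: the paper's proof consists of the citations of Bridson--Haefliger II.5.12 (a space of curvature $\le\kappa$ that is a manifold has the geodesic extension property), II.5.8(1) (local extendability upgrades to extension over all of $\R$ in a complete space), and II.1.4(2) (local geodesics of length $<D_\kappa$ in a $\CAT(\kappa)$ space are minimizing), and these are exactly your Steps 1--3. Two details in your unpacking need fixing. First, the retraction of $\overline{B}(y,\varepsilon)\setminus\{y\}$ onto $S(y,\varepsilon)$ along geodesics from $y$ already presupposes extendability, and it is not needed: the punctured metric ball is non-contractible because the ball is an open contractible subset of $\R^n$ and $H_n(\Omega,\Omega\setminus\{y\})\cong\mathbb{Z}$, by excision. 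Second, in Step 2 the extensions on $[a,t]$ for $t<T$ need not be compatible, so you must either pass to a limit via Arzel\`a--Ascoli or use Zorn's lemma, as in II.5.8(1).
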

\begin{proof}
    Since $(\Omega, k_\Omega)$ is homeomorphic to a topological manifold without boundary and $\CAT(2)$, it has the geodesic extension property, see \cite[Chapter II, Proposition 5.12 and Lemma 5.8 (1)]{BridsonHaefliger}. The segments of $\bar{\gamma}$ up to length $< D_2$ are length-minimizing by \cite[Chapter II, Proposition 1.4 (2)]{BridsonHaefliger}.
\end{proof}

\begin{theorem}[Convexity of balls]\label{theorem-convexity-of-balls}
For every $p \in \Omega$, the quasihyperbolic ball $B_{k_\Omega}( p, D_2/2 )$ is quasihyperbolically convex.
\end{theorem}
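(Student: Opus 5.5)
The plan is to deduce the statement from Theorem~\ref{thm:universal-cat2} (and Theorem~\ref{thm:finite-punctures-CAT-2} when $n=2$). The key ingredient is the standard fact that in a $\CAT(\kappa)$ space, $\kappa>0$, every open ball of radius $r \leq D_\kappa/2$ is convex. I would either cite this, for instance \cite[Chapter~II, Proposition~1.4(3)]{BridsonHaefliger}, or reprove it directly as follows.

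Fix $p\in\Omega$, $r = D_2/2$, and points $x,y\in B_{k_\Omega}(p,r)$. Then $k_\Omega(x,y) \le k_\Omega(x,p)+k_\Omega(p,y) < D_2$. By Theorem~\ref{theorem-uniqueness-of-geodesics}, the quasihyperbolic geodesic $\gamma$ from $x$ to $y$ is unique. It therefore suffices to show that $\gamma$ stays in the ball.

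Consider the geodesic triangle $\Delta(p,x,y)$. Its perimeter is less than $4r = 2D_2$, so a comparison triangle exists in $\mathbb M_2^2$, the sphere of radius $1/\sqrt2$. In the comparison triangle we have $d_2(\bar p,\bar x)<D_2/2$ and $d_2(\bar p,\bar y)<D_2/2$. Hence $\bar x,\bar y$ lie in the open hemisphere ball of radius $D_2/2$ around $\bar p$, which is geodesically convex in the sphere. So the comparison segment $[\bar x,\bar y]$ stays at distance $<D_2/2$ from $\bar p$.

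To transfer this back, take $z=\gamma(t)$ and its comparison point $\bar z$. The $\CAT(2)$ inequality, in the form that the comparison map is $1$-Lipschitz, gives
\[
k_\Omega(p,z)\le d_2(\bar p,\bar z)<D_2/2 .
\]
This holds for every $z$ on $\gamma$, so $\gamma\subset B_{k_\Omega}(p,D_2/2)$, which is the claimed quasihyperbolic convexity.

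The only delicate point is the spherical fact that open balls of radius $\le \pi/(2\sqrt\kappa)$ in $\mathbb M^2_\kappa$ are convex. For the strictly-less-than case this follows because the distance to $\bar p$ is convex along geodesics in the open hemisphere: $\cos(\sqrt2\, d)$ restricted to a geodesic is of the form $a\cos(\sqrt 2 s)+b\sin(\sqrt2 s)$ and stays positive on the segment. Beyond that, the argument is routine; I do not anticipate a real obstacle, since the whole weight of the result sits in Theorem~\ref{thm:universal-cat2}.
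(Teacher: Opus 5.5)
Your proposal is correct and is essentially the paper's proof: the paper simply cites \cite[Chapter~II, Proposition~1.4(3)]{BridsonHaefliger} for $\CAT(2)$ spaces, and your comparison-triangle argument in the open hemisphere of $\mathbb M_2^2$ is the standard proof of that proposition. Your direct argument also covers the endpoint radius $D_2/2$. The cited statement is for radii strictly less than $D_\kappa/2$, so otherwise one needs the exhaustion $B_{k_\Omega}(p,D_2/2)=\bigcup_{r<D_2/2}B_{k_\Omega}(p,r)$.
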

\begin{proof}
    The claim is \cite[Chapter II, Proposition (3)]{BridsonHaefliger}.
\end{proof}

\begin{remark}\label{remark-vaisalas-conjectures}
    The proofs clearly show that if a quasihyperbolic domain $( \Omega, k_\Omega )$ is $\mathrm{CAT}(\kappa)$ for $\kappa \in \R$, then we may replace $D_2$ by $D_\kappa$ in the theorems above. In particular, this gives a sharper result on two-dimensional domains (which are $\mathrm{CAT}(1)$) or on convex domains (which are $\CAT(0)$).
\end{remark}

In the following subsections, we derive a number of consequences from Theorems~\ref{theorem-uniqueness-of-geodesics}-\ref{theorem-convexity-of-balls}. 
\subsection{First order regularity of quasihyperbolic geodesics}\label{section-first-order-regularity-of-qh-geodesics}

In this subsection, we recall and refine estimates for quasihyperbolic geodesics due to \cite{Martin1985-qc-and-bi-lipschitz-homeomorphisms-uniform-domains-and-the-quasihyperbolic-metric}. While Martin proves sharp regularity for individual geodesics, our contribution concerns the regularity when varying an end point of the geodesics. Our methods use in a critical way the $\CAT(\kappa)$ theory.

We first fix some notation and definitions. Let $(\Omega,k_\Omega)$ be a quasihyperbolic domain that is
$\CAT(\kappa)$ and let $U_p=B_{k_{\Omega}}(p, D_\kappa)\setminus \{p\}$.

\begin{enumerate}
    \item For every $x\in U_p$, let $\gamma_x:[0,1]\to\Omega$
be the quasihyperbolic geodesic from $p$ to $x$, parametrized with
constant quasihyperbolic speed. Thus $\gamma_x(0)=p$, $\gamma_x(1)=x$, and
$$\frac{|\gamma_x'(t)|}
{\delta_\Omega(\gamma_x(t))}
=
k_\Omega(p,x)$$
for (almost) every $t\in[0,1]$.
\item Let $L_x=\ell(\gamma_x)$ denote the Euclidean length of $\gamma_x$, and let $C_x:[0, L_x]\to\Omega$
be its Euclidean arclength parametrization, oriented from $p$ to $x$. Thus
$$C_x(0)=p,
\quad
C_x(L_x)=x,
\quad
|C_x'(s)|=1.$$

\item Define $E_x:[0,1]\to\Omega$, $E_x(t)=C_x(L_xt)$. Then $E_x$ parametrizes the same oriented curve from $p$ to $x$ with
constant Euclidean speed:
$$E_x(0)=p,
\quad
E_x(1)=x,
\quad
|E_x'(t)|=L_x.$$
\item Define
$$\theta_x(t)
=
\frac1{L_x}
\int_0^t|\gamma_x'(s)|\,ds.$$
We conclude that $\theta_x:[0,1]\to[0,1]$ is a strictly increasing homeomorphism and $\gamma_x=E_x\circ\theta_x$. Equivalently, $E_x=\gamma_x\circ\theta_x^{-1}$. Moreover,
\begin{equation}\label{eq:theta_x'}
\theta_x'(t)
=
\frac{|\gamma_x'(t)|}{L_x}
=
\frac{k_\Omega(p,x)}{L_x}
\delta_\Omega(\gamma_x(t))
\end{equation}
for (almost) every $t$.
\end{enumerate}

We also consider uniform norms for paths and their derivatives. To fix notation, for $h \colon [0,1]\to\mathbb R^n$, define
\begin{align*}
    \|h\|_{\mathcal{C}^0([0,1];\mathbb R^n)} 
    &\coloneqq 
    \sup_{t\in[0,1]}|h(t)|,
    \quad\text{and}
    \\
    \|h\|_{\mathcal{C}^1([0,1];\mathbb R^n)} 
    &\coloneqq 
    \|h\|_{\mathcal{C}^0([0,1];\mathbb R^n)}
    +
    \|h'\|_{\mathcal{C}^0([0,1];\mathbb R^n)}.
\end{align*}
When the domain and codomain are clear, we write $\|\cdot\|_{C^0}$ and $\|\cdot\|_{C^1}$ to simplify notation.

To formulate precise statements, we also define the global Lipschitz constant and the pointwise upper Lipschitz constant for functions defined on an interval. For a map $h \colon I\to \mathbb{R}^n$, where $I\subset \R$ is an interval, its global Lipschitz constant is defined by
$$
\operatorname{Lip}(h)\coloneqq
\sup_{\substack{s,t\in I\\s\neq t}}
\frac{|h(s)-h(t)|}{|s-t|}.
$$
Its pointwise upper Lipschitz constant at $t\in I$ is
$$
\operatorname{lip}(h)(t)\coloneqq
\limsup_{\substack{s\to t\\s\in I,\,s\neq t}}
\frac{|h(s)-h(t)|}{|s-t|}.
$$
Since the domain of $h$ is an interval, a simple argument implies $\operatorname{Lip}(h) = \sup_{t} \operatorname{lip}(h)(t)$ in $[0,\infty]$.

We first establish some basic properties of quasihyperbolic geodesics.

\begin{proposition}\label{proposition-uniform-c11-geodesics}
Let $r>0$. Then, for every $x\in U_p$,
$$\gamma_x\in \mathcal{C}^{1,1}([0,1];\mathbb R^n) \text{ and } |\gamma_x'(t)|
=
k_{\Omega}(p,x)\delta_\Omega(\gamma_x(t))$$
for every $t\in[0,1]$, with one-sided derivatives at the endpoints. More precisely,
$$\|\gamma_x-p\|_{\mathcal{C}^0}
\leq
\delta_\Omega(p)(e^{k_{\Omega}(p,x)}-1),$$
$$\|\gamma_x'\|_{\mathcal{C}^0}
\leq
k_{\Omega}(p,x)e^{k_{\Omega}(p,x)}\delta_\Omega(p),$$
and
$$\operatorname{Lip}(\gamma_x')
\leq
2k_{\Omega}(p,x)^2e^{k_{\Omega}(p,x)}\delta_\Omega(p).$$
Consequently, if $r < D_\kappa$,
$$\sup_{x\in\overline B_{k_\Omega}(p,r)\setminus\{p\}}
\operatorname{Lip}(\gamma_x')
\leq
2r^2e^r\delta_\Omega(p).$$
\end{proposition}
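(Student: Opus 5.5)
We rely on Martin's regularity theory for quasihyperbolic geodesics: each geodesic is $\mathcal{C}^{1,1}$ in Euclidean arclength, and its Euclidean unit tangent $T = C_x'$ is Lipschitz with $|T'(s)| \leq 1/\delta_\Omega(C_x(s))$ for a.e.\ $s$. This is the nonsmooth counterpart of the identity $T' = \nabla F - \langle \nabla F, T\rangle T$ in the proof of Lemma~\ref{lem:g-length-of-conformal-closed-geodesic}, using $|\nabla \log \delta_\Omega| \leq 1/\delta_\Omega$. Set $k = k_\Omega(p,x)$. The plan is to translate Martin's bound into the constant-quasihyperbolic-speed parametrization $\gamma_x = E_x \circ \theta_x$ and to control $\delta_\Omega$ along the curve by $\delta_\Omega(p)$ using \eqref{eq:qh-estimate-1}.

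\textbf{Step 1 (speed).} Since $\gamma_x$ is a geodesic with constant quasihyperbolic speed, $|\gamma_x'(t)|/\delta_\Omega(\gamma_x(t)) = k$ for almost every $t$. By Martin, $E_x$ is $\mathcal{C}^1$, and $\theta_x'$ given by \eqref{eq:theta_x'} is continuous, since $\delta_\Omega$ is $1$-Lipschitz. Hence $\gamma_x = E_x \circ \theta_x$ is $\mathcal{C}^1$ with $\gamma_x'(t) = k\,\delta_\Omega(\gamma_x(t))\, T(\theta_x(t) L_x)$. The speed identity therefore holds for every $t$, including one-sided derivatives at the endpoints.

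\textbf{Step 2 ($\mathcal{C}^0$ bounds).} By \eqref{eq:qh-estimate-1}, for every $t$ we have $k_\Omega(p,\gamma_x(t)) \leq k$, so $\delta_\Omega(\gamma_x(t)) \leq e^{k}\delta_\Omega(p)$. This immediately gives $\|\gamma_x'\|_{\mathcal{C}^0} \leq k e^{k}\delta_\Omega(p)$. For the displacement, the first inequality in \eqref{eq:qh-estimate-1} gives
\[
|\gamma_x(t) - p| \leq \min\{\delta_\Omega(p), \delta_\Omega(\gamma_x(t))\}\,(e^{k}-1) \leq \delta_\Omega(p)(e^{k}-1).
\]

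\textbf{Step 3 (Lipschitz bound on $\gamma_x'$).} Write $\gamma_x' = k\,(\delta_\Omega\circ\gamma_x)\,(T\circ\sigma)$ with $\sigma = L_x\theta_x$, and differentiate almost everywhere by the product rule.
\begin{itemize}
\item The first term has modulus at most $k\,|\gamma_x'| \leq k\cdot k\,\delta_\Omega(\gamma_x)$, because $\delta_\Omega$ is $1$-Lipschitz.
\item The second term has modulus $k\,\delta_\Omega(\gamma_x)\,|T'(\sigma)|\,\sigma' \leq k\,\delta_\Omega(\gamma_x)\cdot \delta_\Omega(\gamma_x)^{-1}\cdot k\,\delta_\Omega(\gamma_x) = k^2\delta_\Omega(\gamma_x)$, using $\sigma' = |\gamma_x'| = k\,\delta_\Omega(\gamma_x)$.
\end{itemize}
Summing gives $|\gamma_x''| \leq 2k^2 \delta_\Omega(\gamma_x) \leq 2k^2 e^{k}\delta_\Omega(p)$ almost everywhere. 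Since $\gamma_x'$ is absolutely continuous, being a composition of Lipschitz maps with a $\mathcal{C}^1$ reparametrization, and the domain is an interval, $\operatorname{Lip}(\gamma_x') = \sup_t \operatorname{lip}(\gamma_x')(t)$ is bounded by the same constant.

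The final uniform statement follows from monotonicity of $k \mapsto 2k^2 e^{k}$ over $x$ with $k_\Omega(p,x) \leq r$.

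\textbf{Main obstacle.} The key input is Martin's pointwise curvature bound $|T'| \leq 1/\delta_\Omega$ for quasihyperbolic geodesics, since $\delta_\Omega$ is not differentiable. If it is not directly available in the needed form, I would derive it by comparison. Locally, a competitor obtained by replacing a short arc with a circular arc would beat the geodesic if the curvature exceeded $1/\delta_\Omega$. Alternatively, one can pass to the limit in the smooth identity from the proof of Lemma~\ref{lem:g-length-of-conformal-closed-geodesic} using the approximations $g_\beta$ together with Lemma~\ref{lemma-monotonicity}. For this, uniqueness of geodesics for $k < D_\kappa$ ensures that the approximating geodesics converge to $\gamma_x$, so the uniform bound $|T_\beta'| \leq |\nabla F_\beta| \leq \rho_\beta$ passes to the limit.
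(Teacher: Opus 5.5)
Your proposal is correct and is essentially the paper's argument. It uses the same ingredients: Martin's pointwise bound $\operatorname{lip}(C_x')\leq 1/\delta_\Omega$ from Lemma~\ref{lemma-lipschitz-constant-of-qh-geodesics}, the factorization $\gamma_x=C_x\circ(L_x\theta_x)$ with $\theta_x'$ from \eqref{eq:theta_x'}, the two-sided bound $e^{-tk}\delta_\Omega(p)\leq\delta_\Omega(\gamma_x(t))\leq e^{tk}\delta_\Omega(p)$ from \eqref{eq:qh-estimate-1}, and an a.e.\ second-derivative bound in which each of the two contributions is at most $k^2\delta_\Omega(\gamma_x(t))$. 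Note that $k_\Omega(p,\gamma_x(t))=tk$ comes from the geodesic property, not from \eqref{eq:qh-estimate-1} itself. The remaining differences are cosmetic: you get the displacement bound directly from the first inequality in \eqref{eq:qh-estimate-1} instead of integrating the speed bound, and you apply the product rule to $k(\delta_\Omega\circ\gamma_x)(T\circ\sigma)$ instead of the second-order chain rule, which is the same computation.
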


To prove Proposition \ref{proposition-uniform-c11-geodesics}, we need the following lemma, essentially due to Martin \cite{Martin1985-qc-and-bi-lipschitz-homeomorphisms-uniform-domains-and-the-quasihyperbolic-metric}.

\begin{lemma}\label{lemma-lipschitz-constant-of-qh-geodesics}
Let $( \Omega, k_\Omega )$ be a quasihyperbolic domain and $B = \overline{B}_{k_\Omega}(p,r)$ for $r > 0$. If $C \colon [a,b] \to ( B, k_\Omega )$ is a quasihyperbolic geodesic parametrized by Euclidean unit speed, then the (one-sided) tangent vector $C' \colon [a,b] \to \Sph^{n-1}$ (at the end points) is Lipschitz with a pointwise Lipschitz constant
\begin{align*}
        \operatorname{lip}( C' )(t)
        \leq
        \frac{ 1 }{ \delta_{\Omega}(C(t)) }.
    \end{align*}
Consequently,
$$\operatorname{Lip}(C')
\leq
\frac{1}
{\operatorname{dist}
\bigl(B,\partial\Omega\bigr)}
\leq
\frac{e^r}{\delta_\Omega(p)}.$$
\end{lemma}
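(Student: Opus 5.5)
The plan is a comparison argument, essentially Martin's: the geodesic $C$ is tested locally against a quasihyperbolic geodesic of a single punctured space $\R^n\setminus\{z\}$, where $z\in\partial\Omega$ is a nearest boundary point to $C(t)$. Fix $t\in[a,b]$ and set $x=C(t)$, $\delta=\delta_\Omega(x)=|x-z|$. Since $\delta_\Omega\le \delta_{\R^n\setminus\{z\}}=|\cdot-z|$ everywhere with equality at $x$, the quasihyperbolic density $\rho=1/\delta_\Omega$ dominates the smooth density $\rho_z=1/|\cdot-z|$ and touches it at $x$. In the quotient $\R^n\setminus\{z\}\cong \R\times\Sph^{n-1}$ via $y\mapsto(\log|y-z|,(y-z)/|y-z|)$, geodesics of $\rho_z^2 g_{\euc}$ are helices. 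The first step is the smooth computation from the proof of Lemma~\ref{lem:g-length-of-conformal-closed-geodesic}: for a curve parametrized by Euclidean arclength, the geodesic equation for $e^{2F}g_{\euc}$ reads $T'=\nabla F-\langle\nabla F,T\rangle T$. Hence its curvature is at most $|\nabla F|$, which for $F=-\log|\cdot-z|$ equals $1/|y-z|$, that is, $1/\delta$ at $x$.

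The second step transfers this to the nonsmooth $C$. I would use a one-sided supersolution argument in the viscosity sense. First, $C$ is $\mathcal C^1$ by Martin's result, or via the regular space of directions noted in Remark~\ref{remark-gcba-space-examples}, so I assume this. The comparison is done on short arcs $C|_{[t-h,t+h]}$. Each such arc minimizes $\int\rho\,ds$ among curves with the same endpoints. Replace the arc by a competitor made of circular arcs of curvature slightly larger than $1/\delta$, bending toward the side where $C$ would have to turn sharply. If $C'$ turned at rate $>(1+\epsilon)/\delta$ near $t$, then a first-order variation would shorten the weighted length. It suffices to estimate $\rho$ from above by $\rho_z$ up to first order, using the fact that $\rho$ is Lipschitz with $\operatorname{lip}(\log\rho)\le \rho$: the distance function $\delta_\Omega$ is $1$-Lipschitz, so $\log\rho$ has slope at most $1/\delta$. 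The variation estimate then gives, for any competitor $\sigma$,
\begin{equation*}
\int_\sigma \rho\,ds-\int_C\rho\,ds\le \int (\text{change in Euclidean length})\rho+\int |\text{normal displacement}|\,|\nabla\log\rho|\rho,
\end{equation*}
with curvature entering against $|\nabla\log\rho|\le 1/\delta_\Omega$. This yields $\operatorname{lip}(C')(t)\le 1/\delta_\Omega(C(t))$.

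This second step is the main obstacle, since $\rho$ is merely Lipschitz (locally $\mathcal{DC}$). My first attempt would avoid a fully first-variation argument: approximate $\Omega$ by finite complements $\Omega_i$ as in Lemma~\ref{lemma-monotonicity}, and smooth the metric via $\rho_\beta$. For $\rho_\beta$, the computation above gives Euclidean curvature at most $|\nabla F_\beta|\le\rho_\beta$, as already verified before the proof of Proposition~\ref{proposition-injectivity-radius-lower-bound}. By the uniqueness of Theorem~\ref{theorem-uniqueness-of-geodesics} and the properness and Arzelà--Ascoli argument of Proposition~\ref{proposition-convergence-of-approximations}, the smooth geodesics converge to $C$ when the length is $<D_2$. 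For longer $C$, apply this to subarcs, where local uniqueness in the CAT space applies. The curvature bounds $|T_j'|\le\rho_{\beta_j}\le m^{1/\beta_j}/\delta_{\Omega_{i}}$ pass to the limit as a pointwise upper Lipschitz bound for $C'$: the tangents are uniformly Lipschitz, so they converge uniformly, and the limit of $1/\delta_{\Omega_i}$ is $1/\delta_\Omega$ locally uniformly. A careful limsup then gives the pointwise bound.

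The final consequence is immediate. The identity $\operatorname{Lip}(C')=\sup_t\operatorname{lip}(C')(t)\le \sup 1/\delta_\Omega(C(t))\le 1/\dist(B,\partial\Omega)$ holds since $C\subset B$. Moreover, \eqref{eq:qh-estimate-1} gives $\delta_\Omega(y)\ge e^{-r}\delta_\Omega(p)$ for $y\in B$, hence $1/\dist(B,\partial\Omega)\le e^r/\delta_\Omega(p)$.
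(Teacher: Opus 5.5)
Your approximation argument is correct, and it takes a genuinely different route from the paper. The paper imports Martin's local estimate from the proof of his Theorem~4.3, which carries a factor $4(1+o(1))$. It then cites Martin's finer analysis (or V\"ais\"al\"a's correction) to reach the constant $1$. That comparison applies to every quasihyperbolic geodesic in every domain and needs no curvature theory.

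You instead reuse the paper's own machinery. The smooth $\rho_\beta$-geodesics on $\R^n\setminus P_i$ satisfy $T'=\nabla F_\beta-\langle\nabla F_\beta,T\rangle T$, hence $|T'|\le\rho_\beta\le|P_i|^{1/\beta}/\delta_\Omega$ on $\Omega$. Uniqueness below $D_2$ then forces them to converge to $C$. This gives the sharp constant directly, and it obtains the existence and Lipschitz continuity of $C'$ from $\mathcal{C}^1$-compactness, with no a priori regularity of $C$. The price is dependence on Theorem~\ref{thm:universal-cat2}. That is harmless here: the lemma is only used afterwards, and the pointwise bound is local, so subarcs of length $<D_2$ suffice.

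In writing it up, make three routine points explicit.
\begin{enumerate}
\item Choose $\beta_i\to\infty$ diagonally with $|P_i|^{1/\beta_i}\to 1$.
\item The approximating geodesics live in $\Omega_i\supset\Omega$ and may a priori leave $\Omega$. However, every point $z$ on them satisfies $\delta_{\Omega_i}(z)\ge e^{-R}\delta_\Omega(x)$, where $R$ bounds their $k_{\Omega_i}$-lengths. Since $\delta_{\Omega_i}\to\delta_\Omega$ locally uniformly on $\R^n$, their limit, and eventually the curves themselves, lie in a fixed compact subset of $\Omega$.
\item Identify the $\mathcal{C}^1$-limit of the constant-Euclidean-speed reparametrizations with a homeomorphic reparametrization of $C$. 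Use that the limiting speed satisfies $\lim_j L_j\ge|x-y|>0$ and that $C$ is injective. Only then pass to the limit in $|C_j'(s)-C_j'(t)|\le\int_t^s\rho_{\beta_j}(C_j(u))\,du$.
\end{enumerate}

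Your first-variation sketch is not a proof as written. It presupposes Euclidean $\mathcal{C}^1$-regularity of $C$, which Remark~\ref{remark-gcba-space-examples} does not supply, and the displayed variation inequality is only heuristic. The approximation route does not need it, so drop it.
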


\begin{proof}
    Martin shows during the proof of \cite[Theorem 4.3]{Martin1985-qc-and-bi-lipschitz-homeomorphisms-uniform-domains-and-the-quasihyperbolic-metric} that
    \begin{align*}
        \frac{ | C'(t)-C'(s) | }{ |t-s| }
        &\leq
        \frac{ 4 (1+o(1)) }{ \delta_{\Omega}( C(t) ) }
        \frac{ | C(t) - C(s) | }{ |t-s| }
        \leq
        \frac{ 4 (1+o(1)) }{ \delta_{\Omega}( C(t) ) }
    \end{align*}
    for a function $o(1) \rightarrow 0$ as $s \rightarrow t$; here $\phi$ is the angle between the normal hyperplanes to $C$ at $C(s)$ and $C(t)$. This implies
    \begin{align*}
        \operatorname{lip}( C' )(t) \leq \frac{ 4 }{ \delta_{\Omega}(C(t)) }
    \end{align*}
    for every $t \in (a,b)$. Hence $C' \colon (a,b) \to \Omega$ has the stated Lipschitz constant up to a factor of four so, in particular, $C'$ extends to $[a,b]$ as a Lipschitz map. By the fundamental theorem of calculus, the extension of $C'$ coincides with the one-sided derivatives of $C$ at the end points. With a finer analysis, the factor of four can be removed, see \cite[Section~4.12, equation (4.10)]{Martin1985-qc-and-bi-lipschitz-homeomorphisms-uniform-domains-and-the-quasihyperbolic-metric} (or \cite[Theorem 4.15, Correction 4.16.]{Vaisala-2013}).
\end{proof}

\begin{proof}[Proof of Proposition \ref{proposition-uniform-c11-geodesics}]
Fix $x\in U_p$. By
Lemma~\ref{lemma-lipschitz-constant-of-qh-geodesics},
$C_x\in \mathcal{C}^{1,1}([0,L_x];\R^n)$. Since
$\gamma_x=C_x\circ(L_x\theta_x)$, by \eqref{eq:theta_x'}, by \eqref{eq:qh-estimate-1}, and 
$k_\Omega(p,\gamma_x(t))=t k_\Omega(p,x)$, we deduce
$$
e^{-t k_\Omega(p,x)}\delta_\Omega(p)
\leq
\delta_\Omega(\gamma_x(t))
\leq
e^{t k_\Omega(p,x)}\delta_\Omega(p).
$$
It follows that $\gamma_x$ is Lipschitz. Therefore
$\delta_\Omega\circ\gamma_x$ is Lipschitz, so
$\theta_x\in \mathcal{C}^{1,1}([0,1])$. Consequently,
$\gamma_x=C_x\circ(L_x\theta_x)$ belongs to
$\mathcal{C}^{1,1}([0,1];\R^n)$, and
$$
\gamma_x'(t)
=
k_\Omega(p,x)\delta_\Omega(\gamma_x(t))
C_x'(L_x\theta_x(t)).
$$
Since $|C_x'|=1$, this gives
$$
|\gamma_x'(t)|
=
k_\Omega(p,x)\delta_\Omega(\gamma_x(t))
$$
for every $t\in[0,1]$, with one-sided derivatives at the endpoints. The preceding boundary-distance estimate now gives
$$
\begin{aligned}
|\gamma_x(t)-p|
\leq
\int_0^t|\gamma_x'(u)|\,du 
&\leq
k_\Omega(p,x)\delta_\Omega(p)
\int_0^t e^{u k_\Omega(p,x)}\,du \\
&=
\delta_\Omega(p)
\bigl(e^{t k_\Omega(p,x)}-1\bigr).
\end{aligned}
$$
The desired estimates for $\gamma_x$ and $\gamma_{x}'$ are immediate.

To finish, for almost every $t\in[0,1]$,
$$
|C_x''(L_x\theta_x(t))|
\leq
\frac1{\delta_\Omega(\gamma_x(t))}
$$
and
$$
|\theta_x''(t)|
\leq
\frac{k_\Omega(p,x)}{L_x}
|\gamma_x'(t)|
=
\frac{k_\Omega(p,x)^2}{L_x}
\delta_\Omega(\gamma_x(t)).
$$
Since $\theta_x'$ is strictly positive, the second-order chain rule applies almost everywhere and gives
$$
\begin{aligned}
|\gamma_x''(t)|
&\leq
L_x^2
|C_x''(L_x\theta_x(t))|
\theta_x'(t)^2
+
L_x|\theta_x''(t)| \\
&\leq
2k_\Omega(p,x)^2
\delta_\Omega(\gamma_x(t))
\leq
2k_\Omega(p,x)^2e^{k_\Omega(p,x)}
\delta_\Omega(p).
\end{aligned}
$$
Thus,
$$
\operatorname{Lip}(\gamma_x')
\leq
2k_\Omega(p,x)^2e^{k_\Omega(p,x)}
\delta_\Omega(p).
$$
Finally, since $s\mapsto s^2e^s$ is increasing on
$[0,\infty)$, we obtain
$$
\sup_{x\in\overline B_{k_\Omega}(p,r)\setminus\{p\}}
\operatorname{Lip}(\gamma_x')
\leq
2r^2e^r\delta_\Omega(p).
$$
The proof is complete.
\end{proof}

Next we establish the first order continuity of quasihyperbolic geodesics.
\begin{proposition}
\label{proposition-first-order-endpoint-continuity}
For every compact set $K\Subset U_p$, there exists a constant $C_K<\infty$ such that
$$\|\gamma_x-\gamma_y\|_{\mathcal{C}^1}
\leq
C_K|x-y|^{1/2}$$
for every $x,y\in K$. Equivalently, the map $x\mapsto \gamma_x$ from $U_p$ into $\mathcal{C}^1([0,1];\mathbb R^n)$ is locally $\frac12$-Hölder continuous.
\end{proposition}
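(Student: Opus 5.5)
The proof has two steps: first a $\mathcal{C}^0$ bound of order $|x-y|$, then an interpolation that upgrades it to a $\mathcal{C}^1$ bound of order $|x-y|^{1/2}$ using the uniform $\mathcal{C}^{1,1}$ control from Proposition~\ref{proposition-uniform-c11-geodesics}.

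\emph{Uniform data on $K$.} Since $K\Subset U_p$ is compact, there are $r<D_\kappa$ and $\rho>0$ with $K\subset \overline B_{k_\Omega}(p,r)\setminus B_{k_\Omega}(p,\rho)$. All geodesics $\gamma_x$, $x\in K$, lie in $\overline B_{k_\Omega}(p,r)$. There $\delta_\Omega\geq e^{-r}\delta_\Omega(p)$, so $k_\Omega$ and the Euclidean metric are locally bi-Lipschitz equivalent on this ball. More precisely, if $|x-y|$ is small compared with $\delta_\Omega(x)$, then $k_\Omega(x,y)\leq 2|x-y|/\delta_\Omega(x)$, by integrating along the segment. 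For $|x-y|$ bounded below, the claim is trivial because $\|\gamma_x\|_{\mathcal{C}^1}$ is uniformly bounded on $K$. So we may assume $k_\Omega(x,y)\leq C|x-y|$.

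\emph{$\mathcal{C}^0$ estimate.} We use $\CAT(\kappa)$ comparison for the triangle $\Delta(p,x,y)$, whose perimeter is less than $2D_\kappa$ once $k_\Omega(x,y)$ is small. Both $\gamma_x$ and $\gamma_y$ are constant-speed parametrizations on $[0,1]$. In the model space $\mathbb M^2_\kappa$, comparing corresponding points gives
\[
d_\kappa(\bar\gamma_x(t),\bar\gamma_y(t))\leq C(r,\kappa)\,d_\kappa(\bar x,\bar y).
\]
The constant $C(r,\kappa)$ is uniform because the sides have length at most $r<D_\kappa$, which is bounded away from the conjugate distance. The comparison map is $1$-Lipschitz, so $k_\Omega(\gamma_x(t),\gamma_y(t))\leq C\,k_\Omega(x,y)$. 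Converting back to Euclidean distance with \eqref{eq:qh-estimate-1} and the bound on $\delta_\Omega$ along the ball gives
\[
\|\gamma_x-\gamma_y\|_{\mathcal{C}^0}\leq C_K|x-y|.
\]

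\emph{Interpolation to $\mathcal{C}^1$.} Put $h=\gamma_x-\gamma_y$. By Proposition~\ref{proposition-uniform-c11-geodesics}, $\operatorname{Lip}(h')\leq 4r^2e^r\delta_\Omega(p)=:M$. We apply a Landau-type inequality on $[0,1]$: for $t\in[0,1]$ and $0<\eta\leq 1$, choose $s$ with $|s-t|=\eta$ inside $[0,1]$. The mean value theorem (componentwise, or applied to $\langle h,e\rangle$ for a unit vector $e$) gives
\[
|h'(t)|\leq \frac{2\|h\|_{\mathcal{C}^0}}{\eta}+M\eta.
\]
Taking $\eta=\min\{1,(\|h\|_{\mathcal{C}^0}/M)^{1/2}\}$ yields
\[
\|h'\|_{\mathcal{C}^0}\leq C\bigl(\|h\|_{\mathcal{C}^0}^{1/2}M^{1/2}+\|h\|_{\mathcal{C}^0}\bigr)\leq C_K|x-y|^{1/2}.
\]
Together with the $\mathcal{C}^0$ bound this proves the claim. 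The equivalence with local $\tfrac12$-Hölder continuity then follows by covering $U_p$ with compact neighbourhoods.

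The main obstacle is making the $\mathcal{C}^0$ step rigorous with constants uniform on $K$. One has to check that the model-space comparison really controls the distance between corresponding points $\bar\gamma_x(t)$ and $\bar\gamma_y(t)$ linearly in $d_\kappa(\bar x,\bar y)$. When $\kappa>0$ this is where $r<D_\kappa$ enters: as $r\to D_\kappa$ the comparison triangles degenerate and the constant blows up. I would handle this by an explicit spherical-law-of-cosines estimate on $\mathbb M^2_\kappa$.
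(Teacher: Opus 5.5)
Your proposal is correct and follows essentially the same route as the paper. It first gets a $\mathcal{C}^0$ bound linear in $|x-y|$ from $\CAT(\kappa)$ comparison on the triangle $\Delta(p,x,y)$, which is the paper's Lemma~\ref{lemma-cat-endpoint-estimate}. It then upgrades this to the $\mathcal{C}^1$ bound of order $|x-y|^{1/2}$ by Landau-type interpolation (Lemma~\ref{lemma-landau-interpolation}), using the uniform bound $\operatorname{Lip}(\gamma_x')\leq 2r^2e^r\delta_\Omega(p)$ from Proposition~\ref{proposition-uniform-c11-geodesics}.

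One small slip: in the interpolation you need $\eta\leq 1/2$ rather than $\eta\leq 1$ to guarantee some $s\in[0,1]$ with $|s-t|=\eta$ for every $t\in[0,1]$ (consider $t=1/2$). This only changes constants.
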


To prove Proposition \ref{proposition-first-order-endpoint-continuity}, we first establish some auxiliary lemmas. 

\begin{lemma}\label{lemma-cat-endpoint-estimate}
For $0 < r < D_\kappa$, there exists a constant $A_{\kappa,r}<\infty$ depending only on $\kappa$ and $r$ such that
$$\sup_{t\in[0,1]}
k_\Omega\bigl(\gamma_x(t),\gamma_y(t)\bigr)
\leq
A_{\kappa,r}k_\Omega(x,y)$$
for every $x,y\in\overline B_{k_\Omega}(p,r) \setminus \{p\}$.
In particular,
$$\|\gamma_x-\gamma_y\|_{\mathcal{C}^0}
\leq e^{4r}A_{\kappa,r}|x-y|$$
for every $x,y\in\overline B_{k_\Omega}(p,r) \setminus \{p\}$.
\end{lemma}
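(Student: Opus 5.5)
Both estimates come from two inputs: the comparison for geodesic triangles in the $\CAT(\kappa)$ space $(\Omega,k_\Omega)$, and the Gehring--Palka bounds \eqref{eq:qh-estimate-1} linking $k_\Omega$ to Euclidean distance on bounded balls.

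\textbf{Step 1: the triangle comparison.} Fix $x,y\in\overline B_{k_\Omega}(p,r)\setminus\{p\}$ with $r<D_\kappa$.

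First suppose $k_\Omega(x,y)$ is small, say $k_\Omega(x,y)<D_\kappa-r$. Then the triangle $\Delta(p,x,y)$ has perimeter less than $2r+(D_\kappa-r)<2D_\kappa$. It is built from the unique geodesics $\gamma_x$, $\gamma_y$ and the geodesic $[x,y]$. We form the comparison triangle $\overline\Delta(\bar p,\bar x,\bar y)$ in $\mathbb M^2_\kappa$. The $\CAT(\kappa)$ inequality of Definition~\ref{def:CAT-kappa} gives
$$k_\Omega(\gamma_x(t),\gamma_y(t))\leq d_\kappa(\bar\gamma_x(t),\bar\gamma_y(t)),$$
where $\bar\gamma_x$, $\bar\gamma_y$ are the comparison sides, also parametrized proportionally on $[0,1]$.

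It then remains to prove a model-space statement. In $\mathbb M^2_\kappa$, for two constant-speed geodesics from $\bar p$ of lengths $\leq r<D_\kappa$, we need
$$\sup_t d_\kappa(\bar\gamma_x(t),\bar\gamma_y(t))\leq A_{\kappa,r}\,d_\kappa(\bar x,\bar y).$$
I would prove this by smooth dependence of the geodesic on its endpoint. The map $\bar x\mapsto\bar\gamma_x(t)=\exp_{\bar p}(t\exp_{\bar p}^{-1}\bar x)$ is smooth on the closed ball $\overline B(\bar p,r)$, because $r<D_\kappa$ is below the conjugate radius. So it is Lipschitz there, with a constant depending only on $\kappa$ and $r$. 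The constant degenerates as $r\to D_\kappa$, which explains why $A_{\kappa,r}$ depends on $r$. Near $\bar p$ this is also fine: the map is essentially linear in normal coordinates.

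When $k_\Omega(x,y)\geq D_\kappa-r$, the left-hand side is at most $2r$ by the triangle inequality. So taking $A_{\kappa,r}\geq 2r/(D_\kappa-r)$ handles this case trivially.

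\textbf{Step 2: conversion to Euclidean distances.} Since $\gamma_x(t),\gamma_y(t)\in\overline B_{k_\Omega}(p,r)$, \eqref{eq:qh-estimate-1} gives $\delta_\Omega\geq e^{-r}\delta_\Omega(p)$ along both geodesics.

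For an upper Euclidean bound, I use that a quasihyperbolic geodesic $\sigma$ joining $\gamma_x(t)$ to $\gamma_y(t)$ has length at most $2r$, so it stays in $\overline B_{k_\Omega}(p,3r)$. Hence
$$|\gamma_x(t)-\gamma_y(t)|\leq \ell(\sigma)\leq e^{3r}\delta_\Omega(p)\,k_\Omega(\gamma_x(t),\gamma_y(t)).$$

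For the reverse bound, $k_\Omega(x,y)\lesssim|x-y|/\delta_\Omega$, I integrate $1/\delta_\Omega$ along the Euclidean segment $[x,y]$, using $|\nabla\delta_\Omega|\leq1$. When $|x-y|\leq\tfrac12 e^{-r}\delta_\Omega(p)$, the segment stays where $\delta_\Omega\geq\tfrac12e^{-r}\delta_\Omega(p)$, so
$$k_\Omega(x,y)\leq 2e^{r}|x-y|/\delta_\Omega(p).$$
Multiplying the two bounds gives the constant $e^{4r}$ up to a harmless factor. The $\delta_\Omega(p)$ factors cancel.

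\textbf{Main obstacle.} The hard part is the large $|x-y|$ regime and getting exactly $e^{4r}$ without $\delta_\Omega(p)$ appearing in the constant. For $|x-y|$ comparable to $e^{-r}\delta_\Omega(p)$, the left side is at most $2\|\gamma-p\|_{C^0}\leq 2\delta_\Omega(p)(e^r-1)$ by Proposition~\ref{proposition-uniform-c11-geodesics}. This is also $\lesssim |x-y|$ with a constant depending only on $r$, which can be absorbed into $A_{\kappa,r}$. Sharper bookkeeping would use the curve estimate \eqref{eq:qh-estimate-2} in place of the segment argument.
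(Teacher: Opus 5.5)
Your proposal is correct and has the same structure as the paper's proof. You apply $\CAT(\kappa)$ comparison in $\Delta(p,x,y)$ when $k_\Omega(x,y)$ is below a threshold, use the trivial bound $2r$ otherwise, and then compare $k_\Omega$ with Euclidean distance on $\overline B_{k_\Omega}(p,r)$ in both directions via \eqref{eq:qh-estimate-1}.

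The differences are only in how the constants are produced. The paper gets the explicit model constant $\sec(\sqrt\kappa r/2)$, and $1$ for $\kappa\leq 0$ by convexity of the metric, from a comparison corollary of Alexander--Kapovitch--Petrunin together with a subdivision of $[x,y]$. You instead use smoothness of $\bar x\mapsto\exp_{\bar p}(t\exp_{\bar p}^{-1}\bar x)$ near $\overline B(\bar p,r)$. That works even when this ball is not convex, since the model segment $[\bar x,\bar y]$ stays in $\overline B(\bar p,(D_\kappa+r)/2)$. Your ``main obstacle'' is avoided in the paper by the global linear bounds \eqref{eq:CAT-end-point-eq-1} and \eqref{eq:CAT-end-point-eq-2}. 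These follow from $k_\Omega\leq 2r$ on the ball together with convexity of $s\mapsto-\log(1-s)$ and $k\mapsto e^{k}-1$, and their constants multiply to exactly $e^{4r}$. Your case split, with the extra factors absorbed into the existential constant $A_{\kappa,r}$, proves the lemma equally well.
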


\begin{proof}
If $\kappa\leq0$, then $(\Omega,k_\Omega)$ is $\CAT(0)$. As such, the length distance is convex, i.e.,
$$
k_\Omega\bigl(\gamma_x(t),\gamma_y(t)\bigr)
\leq
t\,k_\Omega(x,y)
\leq
k_\Omega(x,y)
$$
by triangle comparison; see e.g. \cite[Proposition~II.2.2]{BridsonHaefliger}. This gives this special case.

Suppose that $\kappa>0$. The idea is again to use comparison triangles and the spherical law of cosines. To obtain explicit estimates, we apply \cite[Corollary~9.13]{AlexanderKapovitchPetrunin} on the ball
$B_{k_\Omega}\bigl(p,(D_\kappa+r)/2\bigr)$ and a subdivision
argument along the geodesic from $x$ to $y$. If also
$$
k_\Omega(x,y)<\frac{D_\kappa-r}{2},
$$
this leads to
$$
k_\Omega\bigl(\gamma_x(t),\gamma_y(t)\bigr)
\leq
\sec\left(\frac{\sqrt\kappa r}{2}\right)k_\Omega(x,y).
$$
If instead $k_\Omega(x,y)\geq(D_\kappa-r)/2$, we have
$$
k_\Omega\bigl(\gamma_x(t),\gamma_y(t)\bigr)
\leq
2r
\leq
\frac{4r}{D_\kappa-r}k_\Omega(x,y).
$$
Consequently, we may take
$$
A_{\kappa,r}
=
\max\left\{
\sec\left(\frac{\sqrt\kappa r}{2}\right),
\frac{4r}{D_\kappa-r}
\right\}.
$$

Finally, a standard calculation gives, for $x,y,u,v\in\overline B_{k_\Omega}(p,r)$,
\begin{equation}\label{eq:CAT-end-point-eq-1}
    k_\Omega(x,y)
\leq
\frac{2re^r}
{(1-e^{-2r})\delta_\Omega(p)}
|x-y|
\end{equation}
and
\begin{equation}\label{eq:CAT-end-point-eq-2}
    |u-v|
\leq
e^r\delta_\Omega(p)
\frac{e^{2r}-1}{2r}
k_\Omega(u,v).
\end{equation}
Together with the first part of the claim, we apply \eqref{eq:CAT-end-point-eq-1} and \eqref{eq:CAT-end-point-eq-2} with $u=\gamma_x(t)$ and $v=\gamma_y(t)$ to obtain
$$
|\gamma_x(t)-\gamma_y(t)|
\leq
e^{4r}A_{\kappa,r}|x-y|.
$$
Taking the supremum over $t\in[0,1]$ gives the desired result.
\end{proof}

\begin{remark}
When $\kappa>0$, the constants in
Lemma~\ref{lemma-cat-endpoint-estimate} need not remain
bounded as $r\uparrow D_\kappa$ (even in a sphere). Thus, uniform estimates hold only on compact
subsets of $U_p$.
\end{remark}

Next, we establish a Landau-type interpolation estimate.
\begin{lemma}[Landau-type interpolation]
\label{lemma-landau-interpolation}
If $h\in \mathcal{C}^{1,1}([0,1];\mathbb R^n)$, then
$$\|h'\|_{\mathcal{C}^0}
\leq
8\left(
\|h\|_{\mathcal{C}^0}
+
\|h\|_{\mathcal{C}^0}^{1/2}
\operatorname{Lip}(h')^{1/2}
\right).$$
\end{lemma}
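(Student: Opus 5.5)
The plan is the standard one-dimensional Landau--Kolmogorov argument, done componentwise along a direction so that we reduce to a scalar function. Set $M_0=\|h\|_{\mathcal{C}^0}$ and $M_2=\operatorname{Lip}(h')$. Fix $t\in[0,1]$ and a unit vector $e\in\R^n$ with $\langle h'(t),e\rangle=|h'(t)|$, and consider the scalar function $\phi(s)=\langle h(s),e\rangle$. Then $\phi\in\mathcal{C}^{1,1}([0,1])$, $|\phi|\leq M_0$, and $\operatorname{Lip}(\phi')\leq M_2$. It therefore suffices to bound $|\phi'(t)|$ by the right-hand side, uniformly in $t$.

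The key step is a Taylor-type estimate with a free step size $\eta\in(0,1]$. Since $[0,1]$ has length one, for every $t$ we can choose $\tau=t\pm\eta\in[0,1]$, picking the sign that keeps us inside the interval. The fundamental theorem of calculus together with the Lipschitz bound on $\phi'$ gives
\[
|\phi(\tau)-\phi(t)-\phi'(t)(\tau-t)|\leq \tfrac12 M_2\eta^2,
\]
and hence
\[
\eta|\phi'(t)|\leq 2M_0+\tfrac12 M_2\eta^2,
\quad\text{that is,}\quad
|\phi'(t)|\leq \frac{2M_0}{\eta}+\frac{M_2\eta}{2}.
\]

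Now optimize over $\eta$ by splitting into two cases.

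\emph{Case $M_2\leq 4M_0$ (including $M_2=0$).} Take $\eta=1$, which gives $|\phi'(t)|\leq 2M_0+2M_0=4M_0$.

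\emph{Case $M_2>4M_0$.} Take $\eta=2(M_0/M_2)^{1/2}<1$, which gives $|\phi'(t)|\leq M_0^{1/2}M_2^{1/2}+M_0^{1/2}M_2^{1/2}=2(M_0M_2)^{1/2}$.

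In either case $|h'(t)|=\phi'(t)\leq 4\bigl(M_0+M_0^{1/2}M_2^{1/2}\bigr)$, which is better than the claimed constant $8$. Taking the supremum over $t$ concludes the argument; at the endpoints the one-sided derivatives are handled identically.

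The only mild obstacle is making sure the step $\eta$ can always be taken inside $[0,1]$. Choosing the direction away from the nearer endpoint guarantees this, since either $t+\eta$ or $t-\eta$ lies in $[0,1]$ whenever $\eta\leq 1$; the restriction $\eta\leq1$ is exactly what forces the case split. The degenerate case $M_0=0$ is trivial, since then $h\equiv0$.
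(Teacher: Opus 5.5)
\textbf{Gap: the step size $\eta\le 1$ is not always admissible.} Your reduction to the scalar function $\phi$ and the Taylor estimate $|\phi'(t)|\leq 2M_0/\eta+M_2\eta/2$ are correct, and this is the same route as the paper. The failure is the admissibility claim for the step size. It is not true that $t+\eta$ or $t-\eta$ lies in $[0,1]$ for every $t\in[0,1]$ and every $\eta\leq 1$. Moving away from the nearer endpoint, you only have room $\max\{t,1-t\}\geq 1/2$. For instance, with $t=1/2$ and $\eta=1$, both $t\pm\eta$ leave $[0,1]$. In general, for $\eta>1/2$ every $t\in(1-\eta,\eta)$ is a counterexample.

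Consequently, your choice $\eta=1$ in the first case is unavailable for every interior $t$. The choice $\eta=2(M_0/M_2)^{1/2}$ in the second case is also unavailable when $4M_0<M_2<16M_0$, since then $\eta\in(1/2,1)$. So the argument as written does not prove the inequality, with either your constant $4$ or the stated constant $8$.

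\textbf{Repair.} Restrict to $\eta\in(0,1/2]$ and move the case split to $M_2=16M_0$.
\begin{itemize}
\item If $M_2\leq 16M_0$, take $\eta=1/2$. This gives $|\phi'(t)|\leq 4M_0+M_2/4\leq 8M_0$.
\item If $M_2>16M_0$, your choice $\eta=2(M_0/M_2)^{1/2}<1/2$ is now admissible and gives $2(M_0M_2)^{1/2}$.
\end{itemize}
This is exactly the paper's proof, and it yields the stated constant $8$.

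If you want your sharper constant, note that $M_2\leq 16M_0$ implies $M_2/4\leq (M_0M_2)^{1/2}$. The first case then actually gives $4M_0+(M_0M_2)^{1/2}$. So the constant $4$ is true, but only after this correction.
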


\begin{proof}
For every $t\in[0,1]$, choose $\varepsilon\in\{-1,1\}$ so that $t+\varepsilon s\in[0,1]$ for $0< s\leq 1/2$. Then, by Taylor expansion,
$$
|h(t+\varepsilon s)-h(t)-\varepsilon s h'(t)|
\leq
\frac{\operatorname{Lip}(h')}{2}s^2,
$$
and consequently
$$
|h'(t)|
\leq
\frac{2\|h\|_{\mathcal{C}^0}}{s}+\frac{\operatorname{Lip}(h')}{2}s.
$$
If $\operatorname{Lip}(h')\leq 16 \|h\|_{\mathcal{C}^0}$, taking $s=1/2$ gives $|h'(t)|\leq 8\|h\|_{\mathcal{C}^0}$. If $\operatorname{Lip}(h')>16\|h\|_{\mathcal{C}^0}$, taking $s=2\sqrt{\|h\|_{\mathcal{C}^0}/\operatorname{Lip}(h')}$ gives $|h'(t)|\leq 2\sqrt{\|h\|_{\mathcal{C}^0}\operatorname{Lip}(h')}$. Hence
$$
\|h'\|_{\mathcal{C}^0}
\leq
8\left(
\|h\|_{\mathcal{C}^0}
+
\|h\|_{\mathcal{C}^0}^{1/2}\operatorname{Lip}(h')^{1/2}
\right).
$$   
The proof is complete.
\end{proof}

\begin{proof}[Proof of Proposition~\ref{proposition-first-order-endpoint-continuity}]
Choose $r<D_\kappa$ such that $K\subset\overline B_{k_\Omega}(p,r)$. For simplicity, write $B=e^{4r}A_{\kappa,r}$. By
Lemma~\ref{lemma-cat-endpoint-estimate},
$$
\|\gamma_x-\gamma_y\|_{\mathcal{C}^0}
\leq
B|x-y|.
$$
Moreover, Proposition~\ref{proposition-uniform-c11-geodesics} gives
$$
\operatorname{Lip}(\gamma_x')
+
\operatorname{Lip}(\gamma_y')
\leq
4r^2e^r\delta_\Omega(p).
$$
Applying Lemma~\ref{lemma-landau-interpolation} to
$\gamma_x-\gamma_y$, we obtain
$$
\begin{aligned}
\|\gamma_x'-\gamma_y'\|_{\mathcal{C}^0}
&\leq
8B|x-y| +
8B^{1/2}
\bigl(4r^2e^r\delta_\Omega(p)\bigr)^{1/2}
|x-y|^{1/2}.
\end{aligned}
$$
Since $K$ is bounded, the terms of order $|x-y|$ can be absorbed into
a constant multiple of $|x-y|^{1/2}$. Combining the preceding
estimates gives
$$
\|\gamma_x-\gamma_y\|_{\mathcal{C}^1}
\leq
C_K|x-y|^{1/2}.
$$
The claim follows.
\end{proof}

\subsection{Quasihyperbolic spheres}
In this subsection, we study the regularity of quasihyperbolic spheres using the results from Section~\ref{section-first-order-regularity-of-qh-geodesics}.

Recall that $(\Omega,k_\Omega)$ is a $\CAT(\kappa)$ quasihyperbolic domain and we consider $U_p=B_{k_{\Omega}}(p, D_\kappa)\setminus \{p\}$.

We fix further notation. For $x\in U_p$, define the terminal Euclidean unit tangent by
$$
\nu_x
=
\frac{\gamma_x'(1)}{|\gamma_x'(1)|}=\frac{E_x'(1)}{L_x}=C_x'(L_x).
$$
Define the quasihyperbolic distance function $f \colon U_p \to ( 0, D_\kappa )$ by $f(x)=k_{\Omega}(p,x)$. Then $f(\gamma_x(t))=tf(x)$ and $\gamma_x'(t)=f(x)\delta_{\Omega}(\gamma_x(t))$. Note that $|\gamma_x'(1)|=f(x)\delta_\Omega(x)>0$ so $x \mapsto \nu_x$ is well-defined in $U_p$. 

We next establish the regularity of the quasihyperbolic distance function $f$.

\begin{theorem}
\label{theorem-qh-distance-c11half}
The map $\nu$ is locally $\frac12$-Hölder continuous. Moreover,
$f\in \mathcal{C}_{\mathrm{loc}}^{1,\frac12}(U_p)$ and
$$
\nabla f(x)
=
\frac{\nu_x}{\delta_\Omega(x)},
\quad
|\nabla f(x)|
=
\frac1{\delta_\Omega(x)}
$$
for every $x\in U_p$. More precisely, for every $K\Subset U_p$, there exists $C_K<\infty$
such that
$$
|\nu_x-\nu_y|
+
|\nabla f(x)-\nabla f(y)|
\leq
C_K|x-y|^{1/2}
$$
for every $x,y\in K$.
\end{theorem}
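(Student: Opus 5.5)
The Hölder continuity of $\nu$ comes straight from Proposition~\ref{proposition-first-order-endpoint-continuity}. For $x\in U_p$ we have $\gamma_x'(1)=f(x)\delta_\Omega(x)\nu_x$, and on a compact $K\Subset U_p$ the quantity $|\gamma_x'(1)|=f(x)\delta_\Omega(x)$ is bounded below by a positive constant, because $f$ and $\delta_\Omega$ are continuous and positive on $K$. The map $v\mapsto v/|v|$ is Lipschitz on $\{|v|\ge c\}$. Therefore
\[
|\nu_x-\nu_y|\le C\,|\gamma_x'(1)-\gamma_y'(1)|\le C\|\gamma_x-\gamma_y\|_{\mathcal C^1}\le C_K|x-y|^{1/2}.
\]
Since $\delta_\Omega$ is $1$-Lipschitz and bounded below on $K$, the field $x\mapsto\nu_x/\delta_\Omega(x)$ is then also locally $\tfrac12$-Hölder. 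It remains to show that $f$ is differentiable with gradient $\nu_x/\delta_\Omega(x)$. A continuous gradient then gives $f\in\mathcal C^{1}$, and the Hölder bound gives $\mathcal C^{1,\frac12}_{\mathrm{loc}}$.

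For the gradient I will use two one-sided estimates at a fixed $x\in U_p$, with $y\to x$.

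\emph{Upper bound.} I compare $f(y)$ with the length of a competitor curve: follow $\gamma_x$ from $p$ to $\gamma_x(1-s)$, then take the Euclidean segment from $\gamma_x(1-s)$ to $y$. Because $\gamma_x$ is $\mathcal C^{1}$ and $\delta_\Omega$ is continuous, choosing $s\to0$ appropriately, or more simply using the segment from $x$ to $y$ directly, gives
\[
f(y)\le f(x)+\int_{[x,y]}\frac{ds}{\delta_\Omega}=f(x)+\frac{|y-x|}{\delta_\Omega(x)}+o(|y-x|).
\]
This only yields $|\nabla f|\le 1/\delta_\Omega$ and carries no directional information. To capture the direction, I use instead that $f$ is differentiable along the geodesic, $\frac{d}{dt}f(\gamma_x(t))=f(x)$, together with the first-variation idea: in direction $\nu_x$, $f$ grows at rate exactly $1/\delta_\Omega(x)$.

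\emph{Symmetric argument.} The cleaner route is symmetric: the estimate just applied at $x$ can equally be applied at $y$ against the geodesic $\gamma_y$. The key inequality I aim for is the first-variation bound
\[
f(y)\le f(x)+\frac{\langle \nu_x,y-x\rangle}{\delta_\Omega(x)}+o(|y-x|).
\]
To get it, I use the competitor formed by $\gamma_x|_{[0,1-s]}$ followed by the segment from $z=\gamma_x(1-s)$ to $y$, with $s\asymp|y-x|^{1/2}$ or tuned as needed. Its $k_\Omega$-length is
\[
(1-s)f(x)+\frac{|y-z|}{\delta_\Omega(z)}\,(1+o(1)).
\]
Expanding $|y-z|$ via $z=x-\ell\nu_x+O(\ell^2)$, where $\ell$ is the Euclidean length of the removed piece and $\ell\approx sf(x)\delta_\Omega(x)$, gives
\[
|y-z|=\ell+\langle\nu_x,y-x\rangle+O\!\left(\frac{|y-x|^2}{\ell}+\ell^2\right).
\]
The terms $\ell$ and $sf(x)$ cancel to first order, with errors $O(\ell^2)$ coming from the Lipschitz bounds on $\delta_\Omega$ and on $\gamma_x'$ (Proposition~\ref{proposition-uniform-c11-geodesics}). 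This is a superdifferential-type bound. Swapping the roles of $x$ and $y$ gives
\[
f(x)\le f(y)+\frac{\langle \nu_y,x-y\rangle}{\delta_\Omega(y)}+o(|y-x|),
\]
where the error is uniform on compacts since the constants are uniform there. Using $|\nu_y-\nu_x|\to0$ from the Hölder estimate, the second inequality becomes a matching lower bound
\[
f(y)\ge f(x)+\frac{\langle\nu_x,y-x\rangle}{\delta_\Omega(x)}-o(|y-x|).
\]
The two bounds together give differentiability with $\nabla f(x)=\nu_x/\delta_\Omega(x)$, and hence $|\nabla f(x)|=1/\delta_\Omega(x)$.

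The main obstacle is making the $o(|y-x|)$ error in the first-variation bound uniform in the base point. The choice $\ell\asymp|y-x|$ gives error $O(|y-x|^2/\ell+\ell^2)$, which is only $O(|y-x|)$. So I would take $\ell=|y-x|^{2/3}$, or simply send $\ell\to0$ after dividing by $|y-x|$. What must be checked is that the $\mathcal C^{1,1}$ constants from Proposition~\ref{proposition-uniform-c11-geodesics} and the Lipschitz bound on $\delta_\Omega$ control all remainders uniformly over $K$.
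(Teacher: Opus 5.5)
Your proof is correct. The H\"older estimate for $\nu$, and hence for $\nu/\delta_\Omega$, is obtained exactly as in the paper. For identifying the gradient, however, you take a genuinely different route.

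\textbf{The paper's route.} The paper argues softly. Since $f$ is locally Lipschitz, Rademacher's theorem gives differentiability almost everywhere. At any differentiability point, the definition of $k_\Omega$ gives $|\nabla f(x)|\le 1/\delta_\Omega(x)$. Differentiating $f(\gamma_x(t))=tf(x)$ at $t=1$ gives $\langle \nabla f(x),\nu_x\rangle = 1/\delta_\Omega(x)$. Equality in Cauchy--Schwarz then forces $\nabla f(x)=\nu_x/\delta_\Omega(x)$. Since this a.e.\ gradient agrees with a continuous field, Clarke's result on generalized gradients upgrades $f$ to $\mathcal C^{1}$.

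\textbf{Your route.} You prove, at every point, a first-variation (supergradient) inequality whose modulus is uniform on compacts. You use the competitor consisting of $\gamma_x|_{[0,1-s]}$ followed by the segment $[z,y]$, relying on the uniform $\mathcal C^{1,1}$ bounds from Martin's lemma and the Lipschitz bound on $1/\delta_\Omega$. You then symmetrize using continuity of $\nu$. This avoids Rademacher and Clarke entirely and yields quantitative information: the error is $O(|y-x|^{4/3})$, and $\nu_x/\delta_\Omega(x)$ is a genuine supergradient at every point. The cost is the delicate bookkeeping that the paper's Cauchy--Schwarz trick sidesteps.

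\textbf{Two points to tighten.}
\begin{enumerate}
\item The factor $(1+o(1))$ in the $k_\Omega$-length of $[z,y]$ must be read as $1+O(\ell)$, which comes from the Lipschitz bound on $1/\delta_\Omega$. A mere $o(1)$ yields an $o(\ell)$ error, and that is not $o(|y-x|)$ because $\ell\gg|y-x|$.
\item Your error bound is $O(\ell^2+|y-x|^2/\ell)$, so the admissible scales are $|y-x|\ll \ell\ll |y-x|^{1/2}$. This window excludes your initial suggestion $s\asymp |y-x|^{1/2}$, which only gives $O(|y-x|)$. It also excludes the alternative of fixing $\ell$, dividing by $|y-x|$, and then letting $\ell\to 0$. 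For fixed $\ell$ there is a positive error term independent of $y$ (the excess of the chord $[z,x]$ over the geodesic piece), so the quotient blows up as $y\to x$. Your choice $\ell=|y-x|^{2/3}$ is the right one. With it, you only need $\ell\le L_x$, which holds because $L_x\ge|x-p|$ is bounded below on $K$.
\end{enumerate}
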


\begin{proof}
Let $K\Subset U_p$. Since
$|\gamma_x'(1)|=f(x)\delta_\Omega(x)$ is bounded away from zero on
$K$, Proposition~\ref{proposition-first-order-endpoint-continuity}
and the local Lipschitz continuity of normalization $v/|v|$ on
$\mathbb R^n\setminus\{0\}$ give
$$
|\nu_x-\nu_y|
\leq
C_K|x-y|^{1/2}.
$$
The triangle inequality for $k_\Omega$ implies that the function $f$ is locally Lipschitz in the Euclidean sense. Consequently, Rademacher’s theorem implies that $f$ is differentiable almost everywhere. Note that at differentiability points of $f$, the gradient of $\nabla f(x)$ has length at most $1/\delta_{\Omega}(x)$ because
\begin{align*}
    \limsup_{ \delta \rightarrow 0^{+} }
    \frac{ |f(x+\delta v)-f(x)| }{\delta}
    \leq
    \limsup_{ \delta \rightarrow 0^{+} }
    \frac{ k_\Omega(x+\delta v,x) }{ \delta }
    =
    \frac{ |v| }{ \delta_{\Omega}(x) }.
\end{align*}
In fact, as $f(\gamma_x(t))=tf(x)$, we conclude that $\langle \nabla f(x), \gamma_x'(1) \rangle=f(x)$. Since $\gamma_x'(1)=f(x)\delta_\Omega(x)\nu_x$, it follows that
$$
\nabla f(x)
=
\frac{\nu_x}{\delta_\Omega(x)}
$$
at differentiability points of $f$. The right-hand side is locally $\frac12$-Hölder continuous because
$\nu$ is locally $\frac12$-Hölder and $\delta_\Omega^{-1}$ is locally Lipschitz. Using the continuity of $\nu_x/\delta_{\Omega}(x)$, standard arguments imply that $f \in \mathcal{C}^{1,\frac{1}{2}}_{\text{loc}}( U_p )$ (see e.g.~\cite[Corollary, page~251]{clarke-1975-generalized-gradients-and-applications}). 
\end{proof}

\begin{lemma}\label{lemma-properties-of-f}
Suppose that $( \Omega, k_\Omega )$ is $\mathrm{CAT}(\kappa)$ for $\kappa \leq 2$. For every $0 < r < D_\kappa$, there exists a $\mathcal{C}^{1,\frac12}_{\text{loc}}$-diffeomorphism $\Psi = ( \Psi_1, \Psi_2 ) \colon U_p \to (0,D_\kappa) \times f^{-1}(r)$ such that $f = \Psi_1$.
\end{lemma}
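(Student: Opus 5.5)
The plan is to flow along the normalized gradient of $f$ in a way that avoids the low regularity of $\nabla f$. By Theorem~\ref{theorem-qh-distance-c11half}, $f\in\mathcal{C}^{1,\frac12}_{\text{loc}}(U_p)$ with $|\nabla f|=1/\delta_\Omega>0$. Hence $f\colon U_p\to(0,D_\kappa)$ is a $\mathcal{C}^1$ submersion. By the implicit function theorem, each level set $S_r=f^{-1}(r)$ is a $\mathcal{C}^{1,\frac12}$ hypersurface. It is compact, being closed in $\overline B_{k_\Omega}(p,r)$, which is compact by properness. So $f$ is a proper submersion onto $(0,D_\kappa)$. Surjectivity follows from geodesic extension (Theorem~\ref{theorem-extendability-of-geodesics}) together with minimality of geodesics below $D_\kappa$.

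I would not integrate the vector field $\nabla f/|\nabla f|^2$, which is only Hölder and so has no unique flow. Instead I use the geodesics themselves as flow lines. For $x\in U_p$, let $\bar\gamma_x$ be the extension of the unit-speed geodesic from $p$ through $x$. By Theorem~\ref{theorem-extendability-of-geodesics} and uniqueness below $D_\kappa$, $\bar\gamma_x$ is minimizing on $[0,D_\kappa)$, so $f(\bar\gamma_x(s))=s$. Define
\begin{equation*}
\Psi(x)=\bigl(f(x),\,\Phi(x)\bigr),\qquad \Phi(x)=\bar\gamma_x(r)\in S_r .
\end{equation*}
The inverse is $(s,y)\mapsto\bar\gamma_y(s)$. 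The key point is that extensions are unique: if two geodesics from $p$ of length $<D_\kappa$ met at a point, uniqueness of geodesics (Theorem~\ref{theorem-uniqueness-of-geodesics}) would force one to be an initial segment of the other. Thus distinct directions give disjoint rays, and $\Psi$ is a bijection.

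\textbf{Regularity.} For points with $f(x)\ge r$, we have $\Phi(x)=\gamma_x(r/f(x))$. Proposition~\ref{proposition-first-order-endpoint-continuity} gives $x\mapsto\gamma_x$ locally $\frac12$-Hölder into $\mathcal{C}^1$, which yields continuity of $\Phi$. Continuity is not differentiability, however, so this alone does not give $\mathcal{C}^{1,\frac12}$.

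\textbf{Main obstacle.} The hard step is proving that $\Phi$ and its inverse are $\mathcal{C}^{1,\frac12}$, rather than merely homeomorphisms with $\frac12$-Hölder dependence.

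The first route I would try replaces $\Phi$ by a smoother fibration transverse to the level sets. Choose a smooth vector field $V$ on $U_p$, uniformly close to $\nu$ on compacts, with $\langle V,\nabla f\rangle>0$. Use $V/\langle V,\nabla f\rangle$. Since $\nabla f$ is only $\mathcal{C}^{0,\frac12}$, this vector field is only Hölder. To fix this, mollify $f$ on compact subsets instead. Pick $\tilde f$ smooth, $\mathcal{C}^1$-close to $f$ on compacts, and flow along the smooth field $V$ normalized so that $f$ increases at unit rate. This is legitimate because $\frac{d}{dt}f(\phi_t)=\langle\nabla f,V\rangle/\langle V,\nabla f\rangle$, and this needs only $f\in\mathcal{C}^1$ along a smooth flow. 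The flow then moves $S_r$ onto $S_s$, and the map $x\mapsto(f(x),\phi_{r-f(x)}(x))$ is $\mathcal{C}^{1,\frac12}$, since $\phi$ is smooth in $x$ and $t$ while $f$ is $\mathcal{C}^{1,\frac12}$.

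The remaining issue is that $V$ is smooth but the normalizing factor involves $\nabla f$, so the normalized field is again only $\frac12$-Hölder. I would handle this by a reparametrization. Flow along the smooth $V$ itself, with no normalization. Record the hitting time $\tau(x)$ of $S_r$, which solves $f(\phi^V_{\tau}(x))=r$. The implicit function theorem, with $\partial_t f(\phi^V_t)=\langle\nabla f,V\rangle>0$, shows $\tau$ is $\mathcal{C}^{1,\frac12}$. Now set $\Psi(x)=(f(x),\phi^V_{\tau(x)}(x))$. Its inverse is $(s,y)\mapsto\phi^V_{\sigma(s,y)}(y)$, where $\sigma$ is again defined implicitly and is $\mathcal{C}^{1,\frac12}$. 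Completeness of these flows within $U_p$ comes from properness of $f$ and the fact that $f$ is strictly monotone along orbits; this is where Ehresmann-type compactness enters.
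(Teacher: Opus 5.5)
Your final construction---flowing along a smooth field $V$ with $\langle V,\nabla f\rangle>0$, getting the hitting time of $f^{-1}(r)$ from the $\mathcal{C}^{1,\frac12}$ implicit function theorem, and setting $\Psi(x)=(f(x),\phi^V_{\tau(x)}(x))$---is essentially the paper's Ehresmann-type argument; the paper normalizes $\frac12\le Df(Z)\le 2$ so that every orbit crosses every level, and in your version strict monotonicity of $f$ along orbits alone does not give this, but the positive lower bound of $\langle V,\nabla f\rangle$ on the compact sets $f^{-1}([a,b])$ does. Do discard the geodesic-ray construction, though: uniqueness of geodesics between endpoints does not make extensions unique, and quasihyperbolic geodesics can branch (e.g.\ a geodesic along the medial axis $\{x_1=0\}$ of $\R^2\setminus\{\pm e_1\}$ can leave it tangentially into either half-plane), so $(s,y)\mapsto\bar\gamma_y(s)$ need not be injective and $x\mapsto\bar\gamma_x(r)$ is not well defined when $f(x)<r$.
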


\begin{proof}
For $0<a\leq b<D_\kappa$, the standard estimates
$$
\delta_\Omega(x)\geq e^{-b}\delta_\Omega(p),
\quad
|x-p|\leq\delta_\Omega(p)(e^b-1)
$$
imply that $f^{-1}([a,b])$ is a compact
subset of $U_p$. This shows that $f$ is proper. If $w \in \partial \Omega$ satisfies $\delta_{\Omega}(p) = |w-p|$, then $t \mapsto f(p+tw)$ is unbounded on the interval $(0,1)$. This gives that $f$ is surjective.

By Theorem~\ref{theorem-qh-distance-c11half}, we have $|\nabla f(x)|=1/\delta_{\Omega}(x)>0$. Therefore $f$ is a $\mathcal{C}_{\text{loc}}^{1,\frac12}$-regular submersion. The properness of $f$ and the implicit function theorem implies that $f^{-1}(r)$ is a compact embedded $\mathcal{C}_{\text{loc}}^{1,\frac12}$ hypersurface. We finish the proof by constructing the map $\Psi$. We use an argument essentially due to Ehresmann \cite{ehresmann-1951-infinitesimal-connections-in-differentiable-fiber-spaces}. We include the details since we are working in a low regularity setting.

Since
$$
Df\left(\frac{\nabla f}{|\nabla f|^2}\right)=1,
$$
a uniform smooth approximation gives a smooth vector field $Z \colon U_p \to \mathbb R^n$ such that
$2 \geq (Df)_x(Z(x)) \geq 1/2$ for every $x\in U_p$. Let $\Phi \colon \R \times U_p \supset V \to U_p$ be its (maximally defined) flow.

Let $0 < r < D_\kappa$. Consider the functions $G \colon V \to \R$, defined as $G( t, x ) = f( \Phi(t,x) )$ and $\tau_r \colon V \to \R$, where $(t,x) \mapsto \tau_r(x) \in \R$ is the unique time for which $G( \tau_r(t,x),x ) = r$. Observe that
$$
    \frac{\partial}{\partial t}
    G(t,x)
    \bigg|_{t=s}
    =
    (Df)_{\Phi(s,x)}(Z(\Phi(s,x)))
    \geq
    \frac{1}{2},
$$
so, by the $\mathcal{C}^{1,\frac{1}{2}}_{\text{loc}}$-implicit function theorem, it follows that $(t,x) \mapsto \tau_r(t,x)$ is $\mathcal{C}^{1,\frac{1}{2}}_{\text{loc}}$-regular. Note that the intersection $V_r = ( \R \times f^{-1}(r) ) \cap V$ is an $n$-dimensional $\mathcal{C}^{1,\frac{1}{2}}_{\text{loc}}$-submanifold of $V$, and thus $\tau_r \colon V_r \to \R$ is $\mathcal{C}^{1,\frac{1}{2}}_{\text{loc}}$-regular.

Here $$\Psi \colon U_p \to (0,D_\kappa) \times f^{-1}(r),$$ where $x \mapsto ( f(x), \Phi(\tau_r(x),x) )$, is the required map. Indeed, the inverse of $\Psi$ is given by $(s,y) \mapsto \Phi( \tau_{s}(y), y  )$, where $\tau_s$ is defined above. Since both maps are $\mathcal{C}^{1,\frac{1}{2}}_{\text{loc}}$-regular, the claim follows.
\end{proof}

\begin{proposition}\label{proposition-radial-projection-C1half-diffeo}
Suppose that $( \Omega, k_\Omega )$ is a quasihyperbolic domain. Then, for every $0<r \leq D_2$, the radial projection
$$
\mathcal R_r \colon f^{-1}(r)\to\mathbb S^{n-1}(p,\delta_{\Omega}(p)r),
\quad
x \mapsto p+\delta_{\Omega}(p)r\frac{x-p}{|x-p|},
$$
is a $\mathcal{C}^{1,\frac{1}{2}}$-diffeomorphism.
\end{proposition}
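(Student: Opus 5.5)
The plan is to reduce the claim to a single transversality statement: the Euclidean ray from $p$ meets the quasihyperbolic sphere $f^{-1}(r)$ exactly once, and it meets it transversally. Concretely, I aim to show that
\begin{equation*}
\langle \nabla f(y), y-p\rangle>0 \quad\text{for every } y\in f^{-1}\bigl((0,r]\bigr)\setminus\{p\}.
\end{equation*}
By Theorem~\ref{theorem-qh-distance-c11half}, $\nabla f(y)=\nu_y/\delta_\Omega(y)$, so this is equivalent to $\langle \nu_y, y-p\rangle>0$. Here I use $r<D_2$, so that $f$ is $\mathcal{C}^{1,\frac12}_{\text{loc}}$ near $f^{-1}(r)$; the endpoint $r=D_2$ would need separate care.

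Granting this inequality, the rest is soft. The map $t\mapsto f(p+tv)$ is continuous and tends to $0$ as $t\to0^+$. It is strictly increasing as long as its value stays $\leq r$, and it is unbounded along rays leaving $\Omega$, as in the proof of Lemma~\ref{lemma-properties-of-f}. Hence each ray from $p$ meets $f^{-1}(r)$ in exactly one point, so $\mathcal R_r$ is a bijection onto the Euclidean sphere $\mathbb S^{n-1}(p,\delta_\Omega(p)r)$. Moreover, $\mathcal R_r$ is the restriction of the smooth map $x\mapsto p+\delta_\Omega(p)r(x-p)/|x-p|$ to the compact $\mathcal{C}^{1,\frac12}$ hypersurface $f^{-1}(r)$ from Lemma~\ref{lemma-properties-of-f}, so $\mathcal R_r$ is $\mathcal{C}^{1,\frac12}$. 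Its differential at $x$ is injective on $T_xf^{-1}(r)=\nu_x^\perp$ exactly when the radial direction $x-p$ is not tangent to the sphere, which is the inequality $\langle\nu_x,x-p\rangle>0$. The inverse function theorem in the $\mathcal{C}^{1,\frac12}$ class then shows that the inverse is $\mathcal{C}^{1,\frac12}$.

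It remains to prove the transversality inequality, and this is the main obstacle. I would follow Väisälä's radial-projection argument. Let $C=C_y$ be the Euclidean unit-speed geodesic from $p$ to $y$ and write
\begin{equation*}
\langle \nu_y, y-p\rangle=\int_0^{L_y}\langle C'(s),\nu_y\rangle\,ds .
\end{equation*}
Lemma~\ref{lemma-lipschitz-constant-of-qh-geodesics} gives $|C''|\leq 1/\delta_\Omega(C)$. Consequently, the angle between $C'(s)$ and $\nu_y$ is at most the quasihyperbolic length of $C|_{[s,L_y]}$, namely $f(y)-f(C(s))$.

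If $r\leq\pi/2$, every tangent has nonnegative inner product with $\nu_y$ and the claim follows at once. For $\pi/2<r<D_2$ this crude bound fails. A naive weighting using the bound $\delta_\Omega(C(\sigma))\leq e^{r-\sigma}\delta_\Omega(y)$ from \eqref{eq:qh-estimate-1}, with $\sigma$ the quasihyperbolic arclength, is not enough. I would therefore try to couple the turning of $C'$ with the growth of $\delta_\Omega$ along $C$, as in Martin's geodesic equation
\begin{equation*}
T'=\nabla F-\langle\nabla F,T\rangle T .
\end{equation*}
The point is that turning can only occur in the direction of $\nabla(-\log\delta_\Omega)$, toward the boundary, which is also where $\delta_\Omega$ shrinks; so the long backward-pointing portions of $C$ should carry small weight $\delta_\Omega$. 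To make this quantitative I would compare with the extremal model $\R^n\setminus\{\text{point}\}$, where the same integral can be computed explicitly.

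If this comparison does not close, I would fall back on using the $\CAT(2)$ structure from Theorem~\ref{thm:universal-cat2} and the geodesic extension property in Theorem~\ref{theorem-extendability-of-geodesics}. The aim there is to show that a Euclidean segment from $p$ cannot be tangent to the sphere $f^{-1}(r)$ for $r<D_2$: the intersection points would be treated as nearby points of $f^{-1}(r)$, and I would compare their quasihyperbolic geodesics from $p$ through triangle comparison in $\mathbb M^2_2$.
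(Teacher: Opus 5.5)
\textbf{Verdict: genuine gap.} Your reduction is sound and matches the paper's. Once $\langle \nu_y,y-p\rangle>0$ is known on $f^{-1}((0,r])$, the rest follows: each ray crosses $f^{-1}(r)$ exactly once (your first-hitting-time argument is a slightly cleaner substitute for the paper's Rolle argument), the radial map restricts to a $\mathcal{C}^{1,\frac12}$ map of full rank, and the inverse function theorem finishes. Your caution about $r=D_2$ is justified; the paper's argument likewise only treats $r<D_2$.

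The gap is that the transversality inequality, the only nontrivial step, is not proved for $\pi/2<r<D_2$. You offer plans there, none carried out, and they have problems:
\begin{itemize}
\item The geodesic equation $T'=\nabla F-\langle\nabla F,T\rangle T$ is not available for a general domain. $\delta_\Omega$ need not be differentiable along the geodesic, and its non-differentiability set can be dense. This is exactly why one only has the Lipschitz bound of Lemma~\ref{lemma-lipschitz-constant-of-qh-geodesics}.
\item Triangle comparison in $\mathbb M^2_2$ controls quasihyperbolic distances and angles. You give no mechanism that turns this into a bound on the Euclidean quantity $\langle \nu_y,y-p\rangle$.
\end{itemize}

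The missing idea is that the weighting you dismissed does work once it is pivoted at the right point. Parametrize the geodesic $\gamma$ from $p$ to $y$ by quasihyperbolic arclength on $[0,r]$. Then $\gamma'=\delta_\Omega(\gamma)T$ with $\operatorname{Lip}(T)\le 1$, so $\langle T(t),T(r)\rangle\ge\cos(r-t)$ and
\[
\langle y-p,\nu_y\rangle\ge\int_0^r\delta_\Omega(\gamma(t))\cos(r-t)\,dt.
\]
Comparing $\delta_\Omega(\gamma(t))$ with $\delta_\Omega(y)$ indeed fails. Instead compare it with $\delta_\Omega(\gamma(t_0))$, where $t_0=r-\pi/2$ is the point at which $\cos(r-t)$ changes sign. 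By \eqref{eq:qh-estimate-1}:
\begin{itemize}
\item for $t<t_0$, where the cosine is negative, $\delta_\Omega(\gamma(t))\le e^{t_0-t}\delta_\Omega(\gamma(t_0))$;
\item for $t>t_0$, where the cosine is positive, $\delta_\Omega(\gamma(t))\ge e^{t_0-t}\delta_\Omega(\gamma(t_0))$.
\end{itemize}
So the single inequality $\delta_\Omega(\gamma(t))\cos(r-t)\ge\delta_\Omega(\gamma(t_0))\,e^{t_0-t}\cos(r-t)$ holds on all of $[0,r]$. Integrating gives
\[
\langle y-p,\nu_y\rangle\ge\frac{\delta_\Omega(\gamma(t_0))\,e^{-\pi/2}}{2}\bigl[e^{r}(\sin r+\cos r)-1\bigr].
\]
The right-hand side is positive for $r<r_*$, where $r_*$ is the unique zero of $e^r(\sin r+\cos r)-1$ in $(\pi/2,\pi)$. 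Since $D_2\approx 2.22<r_*\approx 2.28$, this covers $0<r<D_2$. No curvature input is needed for this step beyond the $\mathcal{C}^{1}$-regularity of $f$, which gives $\nabla f=\nu/\delta_\Omega$.
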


\begin{remark}
    The proof we present below shows more than we need: the radial projection $\mathcal{R}_r$ is bi-Lipschitz on the interval $(0,r_{*})$, where $r_{*} \in (0,\pi)$ is the unique solution of $e^r(\sin r+\cos r)=1$. The bi-Lipschitz conclusion for the range $(0,r_{*})$ extend the corresponding result by Väisälä on the interval $(0,\pi/2)$, see \cite[Theorem~3.11]{vaisala-2007-quasihyperbolic-geometry-of-domains-in-hilbert-spaces}. In fact, when $\Sph^{n-1}(p, \delta_{\Omega}(p)r)$ is equipped with the Euclidean distance, where $0 < r < r_{*}$, the bi-Lipschitz constant of $\mathcal{R}_r$ is bounded from above by
    \begin{align}\label{equation-bi-Lipschitz-constant-radial-projection}
        \max\left\{
            \frac{r}{1-e^{-r}},
            \frac{\pi(e^{r}-1)^2}{r(\sin r+\cos r-e^{-r})}
        \right\}.  
    \end{align}
    The function defined by \eqref{equation-bi-Lipschitz-constant-radial-projection} is continuous and bounded on each interval $(0,r_0]$ for each $r_0 \in (0,r_{*})$. Since we do not need this fact, we instead note that $D_2 < r_{*}$ and $D_{2} \leq D_{\kappa}$ and present a proof of the $\mathcal{C}^{1,\frac{1}{2}}$-diffeomorphism conclusion for the interval $0 < r < D_2$.
\end{remark}

\begin{proof}[Proof of Proposition~\ref{proposition-radial-projection-C1half-diffeo}]
We use the following facts during the proof: for $0 < r < D_2$, the restriction of the smooth radial projection to $f^{-1}(r)$ is
$\mathcal{C}^{1,\frac12}$ and $\ker(D\mathcal R_r)_x=\mathbb R \cdot (x-p) \cap T_xf^{-1}(r)$.

To start, we first suppose that there exists $r_0>0$ such that for all $0 < r < r_0$, $\langle x-p,\nu_x\rangle>0$ for every $x \in f^{-1}(r)$. We claim that this implies that $\mathcal{R}_r$ has full rank and, in fact, that $\mathcal{R}_r$ is a $\mathcal{C}^{1,\frac{1}{2}}$-diffeomorphism. Indeed, each ray $t \mapsto p+tu$ intersects $f^{-1}(r)$ at least once, implying the surjectivity of $\mathcal{R}_r$. To deduce injectivity using the radius bound, if $p+sv$ is such an intersection point, then
$$
\frac{d}{dt}f(p+tu)
=
\frac{\langle u, \nu_{p+tu}\rangle}
{\delta_\Omega(p+tu)}
>0
$$
for every $0 < t \leq s$. Suppose that there were $0 < s_1 < s_2$ such that $f(p+s_1u) = r = f(p+s_2u)$. Then Rolle's theorem yields the existence of $s_1 < t_1 < s_2$ with
\begin{align*}
    0
    =
    \frac{d}{dt}f(p+tu)\biggr|_{t=t_1}
    =
    \frac{\langle u, \nu_{p+t_1u}\rangle}
    {\delta_\Omega(p+t_1u)}.
\end{align*}
This implies that we must have $f(p+t_1u) > r_0$. Since $t \mapsto f(p+tu)$ is increasing on the connected components of $f^{-1}( [0,r_0) )$, the infimum over the values of $t \mapsto f(p+tu)$ after $t_1$ is at least $r_0$ which contradicts the definition of $s_2$. Hence $\mathcal{R}_r$ must be injective.

The definition of $r_0$ also implies that $\mathcal{R}_r$ has full rank, so $\mathcal{R}_r$ being a $\mathcal{C}^{1,\frac{1}{2}}$-diffeomorphism follows.

To finish the proof, we show that for every $0 < r < D_2$ and a quasihyperbolic geodesic $\gamma \colon [0,r] \to (\Omega,k_\Omega)$ joining $p$ to $f^{-1}(r)$, it holds that $\langle x-p, \gamma'(1) \rangle > 0$. This suffices for the claim. Define the Euclidean unit tangent by $T(t)=\gamma'(t)/|\gamma'(t)|$. Then $T(r)=\gamma'(1)/|\gamma'(t)|$ and $\gamma'(t)=\delta_{\Omega}(\gamma(t))T(t)$. Lemma~\ref{lemma-lipschitz-constant-of-qh-geodesics}, after reparametrization, implies $\operatorname{Lip}(T)\leq 1$. Consequently,
$$
\angle(T(t),T(r))=d_{\mathbb S^{n-1}}(T(t),T(r))\le \ell_{\Sph^{n-1}}(T|_{[t,r]})=\int_t^r |T'(u)|\,du\le r-t.
$$
Since $r<\pi$ and cosine is decreasing on $[0,\pi]$, we have 
$$
\langle T(t),T(r)\rangle=\cos\bigl(d_{\Sph^{n-1}}(T(t),T(r))\bigr)\geq\cos(r-t).
$$
Using $\gamma'(t)=\delta_{\Omega}(\gamma(t))T(t)$, we obtain
\begin{align}
\nonumber  \langle x-p, T(r)\rangle
&=
\int_0^r \delta_{\Omega}(\gamma(t))\langle T(t),T(r)\rangle\,dt \\
&\geq
\int_0^r \delta_{\Omega}(\gamma(t))\cos(r-t)\,dt.
\label{eq:main-integral}
\end{align}

To estimate the integral~\eqref{eq:main-integral}, we first consider the case $\pi/2 < r$ and let $t_0 = r-\pi/2$. Then, as~\eqref{eq:qh-estimate-1} gives
\begin{equation}\label{equation-logarithmic-control}
\left|\log\frac{\delta_{\Omega}(\gamma(t))}
{\delta_{\Omega}(\gamma(t_0))}\right|\leq |t-t_0|,
\end{equation}
we conclude that
\begin{equation}\label{eq:entire-interval}
    \delta_{\Omega}(\gamma(t))\cos(r-t)\geq
\delta_{\Omega}(\gamma(t_0))e^{t_0-t}\cos(r-t)
\end{equation}
by applying \eqref{equation-logarithmic-control} separately on the intervals $(0,t_0)$ and $(t_0,r)$, depending on the sign of $\cos$.

 Using \eqref{eq:entire-interval}, we obtain
\begin{align*}
    \langle x-p, T(r)\rangle
&\geq
\delta_{\Omega}(\gamma(t_0))
\int_0^r e^{t_0-t}\cos(r-t)\,dt \\
&=\frac{\delta_{\Omega}(\gamma(t_0))e^{-\pi/2}}{2}
\left[e^r(\sin r+\cos r)-1\right]\\
&\geq
\frac{\delta_\Omega(p)}2
\left(\sin r+\cos r-e^{-r}\right),
\end{align*}
where the last inequality uses 
$\delta_\Omega(\gamma(t_0))\geq e^{-t_0}\delta_\Omega(p)$
and positivity of the bracket. Thus, for every such minimizing
geodesic,
\begin{equation}\label{eq:positive-radial-terminal-tangent}
\langle x-p,T(r)\rangle
\geq
\frac{\delta_\Omega(p)}2
\left(\sin r+\cos r-e^{-r}\right)>0.
\end{equation}
In the simpler case $r\le \pi/2$, we get
\begin{align*}
    \langle x-p,T(r)\rangle
&\geq
\delta_\Omega(p)\int_0^r e^{-t}\cos(r-t)\,dt\\
&=
\frac{\delta_\Omega(p)}2
\left(\sin r+\cos r-e^{-r}\right)
>
0.
\end{align*}
So inequality \eqref{eq:positive-radial-terminal-tangent} holds for every quasihyperbolic geodesic joining $p$ to $f^{-1}(r)$ for every $0 < r < r_{*}$. Since $D_2 < r_{*}$, this completes the proof.
\end{proof}
\begin{proof}[Proof of Theorem~\ref{theorem-c1-regular-spheres}]
    Let $\kappa \leq 2$ be such that $( \Omega, k_\Omega )$ is $\mathrm{CAT}(\kappa)$.
    Proposition~\ref{proposition-radial-projection-C1half-diffeo} shows that, for $0 < r < D_2$, the radial projection $\mathcal R_r \colon f^{-1}(r)\to\mathbb S^{n-1}$ is a $\mathcal{C}^{1,\frac12}$-diffeomorphism. Then $\mathrm{id} \times \mathcal{R}_r \colon (0,D_\kappa) \times f^{-1}(r) \to (0,D_\kappa) \times \Sph^{n-1},$ $(t,x) \mapsto (t, \mathcal{R}_{r}(x))$, is a $\mathcal{C}^{1,\frac12}_{\text{loc}}$-diffeomorphism. Composition with the map $\Psi \colon U_p \to (0,D_\kappa) \times f^{-1}(r)$ from Lemma~\ref{lemma-properties-of-f} concludes the proof.
\end{proof}
\subsection{Angles of quasihyperbolic metrics and Euclidean convexity}
We finish this section with the compatibility of the Alexandrov and Euclidean angles along quasihyperbolic geodesics, with applications to Euclidean convexity. For this, consider a domain $\Omega \subsetneq \R^n$ and local geodesics $\gamma, \theta \colon [0,a] \to ( \Omega, k_\Omega )$ parametrized by (quasihyperbolic) arc length with $\gamma(0) = p = \theta(0)$. Then the \emph{Alexandrov angle} is
\begin{align*}
    \angle( \gamma, \theta ) = \lim_{ t \to 0^{+} } 2 \arcsin\left( \frac{1}{2t} k_\Omega( \gamma(t), \theta(t) ) \right).
\end{align*}
The corresponding \emph{Euclidean angle} is
\begin{align*}
    \angle_{\euc}( \gamma, \theta ) = \arccos\left( \frac{ \langle \gamma'(0), \theta'(0) \rangle }{ \delta_{\Omega}(p)^2 } \right),
\end{align*}
where $\arccos \colon [-1,1] \to [0,\pi]$. The following proposition shows that these angles coincide, extending \cite[Proposition E]{Herron-2020}.
\begin{proposition}\label{proposition-coincidence-of-two-angles}
    Let $( \Omega, k_\Omega )$ be a quasihyperbolic domain. Then, for each pair of quasihyperbolic geodesics $\gamma, \theta \colon [0,a] \to ( \Omega, k_\Omega )$ with $\gamma(0) = p = \theta(0)$, the Alexandrov and Euclidean angles of $\gamma$ and $\theta$ agree.
\end{proposition}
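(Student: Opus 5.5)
The plan is to compare $k_\Omega(\gamma(t),\theta(t))$ with the Euclidean distance $|\gamma(t)-\theta(t)|$ at small scales, where the quasihyperbolic metric is infinitesimally $\delta_\Omega(p)^{-1}$ times the Euclidean one. Since both curves are quasihyperbolic geodesics parametrized by quasihyperbolic arclength, Proposition~\ref{proposition-uniform-c11-geodesics} (after reparametrizing from $[0,1]$ to $[0,a]$) shows that they are $\mathcal{C}^{1,1}$ with $|\gamma'(t)|=\delta_\Omega(\gamma(t))$ and $|\theta'(t)|=\delta_\Omega(\theta(t))$. In particular $|\gamma'(0)|=|\theta'(0)|=\delta_\Omega(p)$, which is why the Euclidean angle is normalized by $\delta_\Omega(p)^2$. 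A first-order Taylor expansion then gives
\[
\gamma(t)-\theta(t)=t\bigl(\gamma'(0)-\theta'(0)\bigr)+O(t^2).
\]

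The key step is the local comparison: for $x,y$ in a small Euclidean ball $B(p,\varepsilon)$,
\[
\frac{|x-y|}{\sup_{B(p,C\varepsilon)}\delta_\Omega}\leq k_\Omega(x,y)\leq \frac{|x-y|}{\inf_{B(p,\varepsilon)}\delta_\Omega}.
\]
For the upper bound, integrate along the Euclidean segment, which stays in $B(p,\varepsilon)$. For the lower bound, note that a quasihyperbolic geodesic joining $x,y$ with small $k_\Omega(x,y)$ has Euclidean length $O(\varepsilon)$ by \eqref{equation-curve-distance-estimates-quantitative}, so it stays in $B(p,C\varepsilon)$; its Euclidean length is at least $|x-y|$. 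Since $\delta_\Omega$ is $1$-Lipschitz, both ratios equal $\delta_\Omega(p)^{-1}(1+O(\varepsilon))$. Applying this with $\varepsilon=O(t)$ yields
\[
k_\Omega(\gamma(t),\theta(t))=\frac{t\,|\gamma'(0)-\theta'(0)|}{\delta_\Omega(p)}\bigl(1+o(1)\bigr).
\]

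Then
\[
\frac{1}{2t}k_\Omega(\gamma(t),\theta(t))\to\frac{|u-v|}{2},\qquad u=\frac{\gamma'(0)}{\delta_\Omega(p)},\quad v=\frac{\theta'(0)}{\delta_\Omega(p)}\in\Sph^{n-1}.
\]
Since $2\arcsin(|u-v|/2)$ is exactly the spherical angle $\arccos\langle u,v\rangle$ between unit vectors, the limit defining the Alexandrov angle exists and equals $\angle_{\euc}(\gamma,\theta)$. Continuity of $\arcsin$ on $[0,1]$ handles the boundary case $u=-v$, where the argument tends to $1$. The CAT(2) property is not essential here, though it guarantees that the Alexandrov angle exists a priori.

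The main obstacle is the lower bound in the local comparison, specifically the localization of near-minimizing curves. If $\delta_\Omega$ is only continuous, one needs near-minimizers to stay in a shrinking Euclidean neighbourhood so that the density is nearly constant along them. I would handle this with the estimate \eqref{eq:qh-estimate-2}/\eqref{equation-curve-distance-estimates-quantitative}. It bounds the Euclidean length of any curve of quasihyperbolic length $\ell$ starting near $p$ by roughly $(e^{\ell}-1)\delta_\Omega(p)$, and here $\ell=O(t)$. This is uniform and elementary, so the step should close without further input.
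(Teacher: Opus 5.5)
Your proposal is correct and follows essentially the same route as the paper: a first-order expansion of $\gamma$ and $\theta$ at $p$, the infinitesimal comparison $k_\Omega(x,y)\approx |x-y|/\delta_\Omega(p)$ near $p$ (upper bound along the Euclidean segment), and the chord--angle identity $2\arcsin(|u-v|/2)=\arccos\langle u,v\rangle$. Two small remarks. First, the lower bound you single out as the main obstacle follows at once from \eqref{eq:qh-estimate-1} together with $\log(1+s)=s(1+o(1))$, which is how the paper obtains it, so localizing the minimizers is unnecessary. Second, state your error term additively, $k_\Omega(\gamma(t),\theta(t))=t|\gamma'(0)-\theta'(0)|/\delta_\Omega(p)+o(t)$: the multiplicative factor $(1+o(1))$ degenerates when $\gamma'(0)=\theta'(0)$, whereas the additive form is what your estimates actually give and is all you need.
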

\begin{proof}
    By the differentiability of $\gamma$ and $\theta$ at $0$ and the local bi-Lipschitz equivalence of $k_\Omega$ to the Euclidean distance near $p$, it holds that
\begin{align*}
    \left| k_\Omega( \gamma(t), \theta(t) ) - k_\Omega( p + \gamma'(0) t, \theta(t) ) \right|
    =
    o(t)
\end{align*}
and similarly
\begin{align*}
    \left| k_\Omega( p + \gamma'(0) t, \theta(t) ) - k_\Omega( p + \gamma'(0) t, p + \theta'(0) t )  \right|
    =
    o(t),
\end{align*}
where the small $o$ notation stands for $\lim_{ t \to 0 } o(t)/t = 0$.
Therefore
\begin{align*}
    \frac{1}{2t} k_\Omega( \gamma(t), \theta(t) )
    =
    \frac{1}{2t} k_\Omega( p + \gamma'(0)t, p + \theta'(0)t )
    +
    \frac{ o(t) }{ 2t }.
\end{align*}
Furthermore, as $q \mapsto 1/\delta_{\Omega}(q)$ is continuous and \eqref{eq:qh-estimate-1} holds, there exists a function $\lim_{ t \to 0^{+} } o(1) = 0$, depending on $p$, for which
\begin{align*}
    \frac{ 1 }{ 2t } k_\Omega( p + \gamma'(0)t, p + \theta'(0)t) )
    =
    \frac{ 1 + o(1) }{2 }\frac{ |\gamma'(0)-\theta'(0)| }{ \delta_{\Omega}(p) }
    +
    o(1).
\end{align*}
We conclude that
\begin{align*}
    \angle( \gamma, \theta ) 
    &= \lim_{ t \to 0^{+} } 2 \arcsin\left( \frac{1}{2t} k_\Omega( p + \gamma'(0)t, p + \theta'(0)t ) \right)
    \\
    &= 2 \arcsin\left( \frac{1}{2}\frac{ \left| \gamma'(0) - \theta'(0) \right| }{ \delta_{\Omega}(p) } \right)
    = \angle_{\euc}( \gamma, \theta );
\end{align*}
the last equality follows by the law of cosines. The claimed equality follows.
\end{proof}

For the following statement, define the cotangent function
$$
\operatorname{ct}_{\kappa}(r)
=
\begin{cases}
	\sqrt{\kappa}\cot(\sqrt{\kappa}r), & \kappa>0,\\
	1/r, & \kappa=0,\\
	\sqrt{-\kappa}\coth(\sqrt{-\kappa}r), & \kappa<0.
\end{cases}
$$
Notice that $r \mapsto \operatorname{ct}_\kappa(r)$ is strictly decreasing on the interval $[0,D_\kappa)$.
\begin{theorem}\label{theorem-main-convexity-result}
Let $(\Omega, k_{\Omega})$ be a quasihyperbolic domain that is $\mathrm{CAT}(\kappa)$ for $\kappa \leq 2$. If $\operatorname{ct}_{\kappa}(r) \geq 1$, then the quasihyperbolic ball $B_{k_{\Omega}}(p,r)$ is convex with respect to the Euclidean metric and strictly convex if the inequality is strict. When $\kappa \leq 0$ or $n = 2$, the conclusions hold for $r \leq 1$.
\end{theorem}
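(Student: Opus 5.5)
We work with $f=k_\Omega(p,\cdot)$ on $U_p$. Theorem~\ref{theorem-qh-distance-c11half} gives $f\in\mathcal C^{1,\frac12}_{\text{loc}}$ with $\nabla f(x)=\nu_x/\delta_\Omega(x)$, and Lemma~\ref{lemma-properties-of-f} shows $f^{-1}(r)$ is a compact $\mathcal C^1$ hypersurface bounding $B_{k_\Omega}(p,r)$ on the side where $f<r$. The plan is to prove Euclidean convexity of the sublevel set $\{f\le r\}$ by showing that $f$ restricted to any Euclidean segment has no interior strict local maximum at level $r$. In practice, the goal is a one-sided second order lower bound for $f$ along lines, of the form
\begin{equation*}
\liminf_{s\to0}\frac{f(x+sv)+f(x-sv)-2f(x)}{s^2}\;\ge\;\frac{|v|^2}{\delta_\Omega(x)^2}\Bigl(\operatorname{ct}_\kappa(f(x))-1\Bigr)
\end{equation*}
for $v\perp\nu_x$ and $x\in f^{-1}(r)$, together with a matching bound in the $\nu_x$ direction. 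Once this is available, a standard argument finishes the proof. If $\operatorname{ct}_\kappa(r)>1$, then at a point of $f^{-1}(r)$ tangent to a segment the restriction of $f$ is strictly convex to second order, so the segment cannot touch $f^{-1}(r)$ from inside; this gives strict convexity. For $\operatorname{ct}_\kappa(r)=1$, we approximate by radii $r'<r$ and take unions.

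To get the lower bound, we compare in the metric $k_\Omega$ first. In a $\CAT(\kappa)$ space the distance function $f$ is $\operatorname{ct}_\kappa$-semiconcave from the wrong side. We need the \emph{lower} bound, namely that the spheres are at least as curved as in $\mathbb M^2_\kappa$. This follows from the comparison applied to hinges at $x$: for $y=\exp$ along a curve through $x$ with tangent orthogonal to the geodesic $\gamma_x$, the angle comparison (Alexandrov angle $\ge$ comparison angle) together with the spherical law of cosines gives
\begin{equation*}
f(y)\ge f(x)+\tfrac12\operatorname{ct}_\kappa(f(x))\,k_\Omega(x,y)^2+o\bigl(k_\Omega(x,y)^2\bigr).
\end{equation*}
Proposition~\ref{proposition-coincidence-of-two-angles} lets us identify the Alexandrov angle at $x$ with the Euclidean angle, so orthogonality to $\nu_x$ in the Euclidean sense is the right notion. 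We then take $y=x+sv$ on a Euclidean line. Two corrections arise. First, $k_\Omega(x,x+sv)$ equals $s|v|/\delta_\Omega(x)$ up to second order; this follows from \eqref{eq:qh-estimate-1} and $1$-Lipschitzness of $\delta_\Omega$. Second, the Euclidean line is not a $k_\Omega$-geodesic, and its geodesic curvature in the conformal metric is controlled by $|\nabla\log\delta_\Omega|\le 1/\delta_\Omega$; in the computation of Lemma~\ref{lem:g-length-of-conformal-closed-geodesic} this is $|T'|\le|\nabla F|\le e^F$. This defect costs exactly $1$ in normalized units. The outcome is the threshold $\operatorname{ct}_\kappa(r)\ge1$, i.e.\ $r\le\arctan(\sqrt\kappa)/\sqrt\kappa$ when $\kappa>0$.

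The main obstacle is making the second step rigorous, since $\delta_\Omega$ is not differentiable. We would handle this by using upper Dini derivatives: $\delta_\Omega(x+sv)\le\delta_\Omega(x)+s|v|$ suffices, because only the comparison $k_\Omega(x,x+sv)\ge\frac{s|v|}{\delta_\Omega(x)}(1-\tfrac{s|v|}{2\delta_\Omega(x)}+o(s))$ is needed. This comes from integrating $1/(\delta_\Omega(x)+t|v|)$, combined with the first order term $\langle\nabla f(x),sv\rangle=0$. The curvature $-1$ contribution then appears through this logarithmic expansion rather than through a Hessian of $\delta_\Omega$. For the final clause, $\kappa\le0$ gives $\operatorname{ct}_\kappa(r)\ge 1/r\ge1$ for $r\le1$. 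When $n=2$, Remark~\ref{remark-small-balls-are-cat(0)-in-two-dimensions} shows the balls of radius $\le\pi/2$ are $\CAT(0)$, so the argument runs with $\kappa=0$ locally and again yields radius one.
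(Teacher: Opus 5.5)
Your overall architecture is sound and is dual to the paper's. The paper shows that nearby points $y$ of $f^{-1}(r)$ satisfy $\langle y-x,\nu_x\rangle<0$; you show $f(x+sv)>r$ on the tangent hyperplane. Your first-contact argument only needs this pointwise for $v\perp\nu_x$, so the ``matching bound in the $\nu_x$ direction'' is unnecessary. There are two slips in the hinge step. First, in a $\CAT(\kappa)$ space the Alexandrov angle is \emph{at most} the comparison angle, and that is the direction that gives $f(y)\geq\cdots$. Second, the bound $f(y)\geq f(x)+\frac12\operatorname{ct}_\kappa(f(x))k_\Omega(x,y)^2+o(k_\Omega(x,y)^2)$ needs the hinge angle at $x$ to be $\pi/2$ up to $o(k_\Omega(x,y))$. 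That holds along a geodesic leaving $x$ orthogonally, but not along an arbitrary curve.

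The genuine gap is the step you call the main obstacle: the mechanism you propose there cannot produce the $-1$.
\begin{itemize}
\item The bound $k_\Omega(x,x+sv)\geq\log(1+s|v|/\delta_\Omega(x))$ only controls $d=k_\Omega(x,x+sv)$. Its correction changes $\frac12\operatorname{ct}_\kappa(r)d^2$ only at order $s^3$.
\item It also only sees how $\delta_\Omega$ varies \emph{along} the line. For a smooth density and $c(s)=x+sv$, one has $(f\circ c)''(0)=\operatorname{Hess}_gf(v,v)+df(\nabla^g_vv)$ with $df(\nabla^g_vv)=|v|^2\langle\nu_x,\nabla\delta_\Omega(x)\rangle/\delta_\Omega(x)^2\geq-|v|^2/\delta_\Omega(x)^2$. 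So the defect is a derivative of $\delta_\Omega$ in the \emph{normal} direction $\nu_x$, which no information along the line detects.
\end{itemize}

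In metric terms, what must be controlled is the angle $\alpha$ at $x$ between $[x,p]$ and the quasihyperbolic geodesic $\sigma$ from $x$ to $x+sv$. Hinge comparison gives $f(x+sv)\geq r-d\cos\alpha+\frac12\operatorname{ct}_\kappa(r)d^2\sin^2\alpha+O(d^3)$, and $\cos\alpha$ is of order $d$.

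The missing ingredient is Martin's bound, Lemma~\ref{lemma-lipschitz-constant-of-qh-geodesics}. It is exactly the nonsmooth form of the estimate $|T'|\leq|\nabla F|\leq e^F$ you quote: in quasihyperbolic arclength, the Euclidean unit tangent $T$ of $\sigma$ satisfies $|T(t)-T(0)|\leq t$. Since $v\perp\nu_x$,
\[
0=\langle sv,\nu_x\rangle=\int_0^d\delta_\Omega(\sigma(t))\langle T(t),\nu_x\rangle\,dt .
\]
Together with $e^{-t}\delta_\Omega(x)\leq\delta_\Omega(\sigma(t))\leq e^{t}\delta_\Omega(x)$, this forces $\cos\alpha=-\langle T(0),\nu_x\rangle\leq\frac d2(1+O(d))$, where the first equality uses Proposition~\ref{proposition-coincidence-of-two-angles}. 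Monotonicity of the model side in $\alpha$ then yields $f(x+sv)\geq r+\frac{d^2}{2}(\operatorname{ct}_\kappa(r)-1)+o(d^2)$ with $d\sim s|v|/\delta_\Omega(x)$. With this replacement your proof closes, and it becomes the paper's computation read in the opposite direction.
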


\begin{proof}
We fix $0 < r < D_\kappa/2$ for now. Fix distinct point $x,y \in f^{-1}(r)$ which are sufficiently close and write $d=k_{\Omega}(x,y)$. Let $\gamma \colon [0,d]\to\Omega$ be the quasihyperbolic unit-speed geodesic from $x$ to $y$. Define its Euclidean unit tangent by $T(t)=\gamma'(t)/|\gamma'(t)|$. Then 
$$
y-x=\int_0^d \delta_{\Omega}(\gamma(t))T(t)\,dt.
$$

Let $\alpha$ be the angle at $x$ between the quasihyperbolic geodesics from $x$ to $p$ and from $x$ to $y$. The initial Euclidean unit tangent of the reversed radial geodesic $x\to p$ is $-\nu_x$. Moreover, quasihyperbolic and Euclidean angles at $x$ agree by Proposition~\ref{proposition-coincidence-of-two-angles}. Therefore,  $\alpha=\angle(-\nu_x, T(0))$. In case $d+2r < 2D_\kappa$, which is true for every small enough $d$, the $\mathrm{CAT}(\kappa)$ triangle comparison applies to the triangle with vertices $p,x,y$. Let $\bar \alpha$ denote the comparison angle corresponding to $\alpha$. Then $\alpha\le \bar \alpha$ by the comparison estimates. The cosine law for the isosceles triangle with side lengths $r,r,d$ gives
$$
\cos\bar{\alpha}
=
\operatorname{ct}_{\kappa}(r)\operatorname{tn}_{\kappa}(d/2),
$$
where
$$
\operatorname{tn}_{\kappa}(t)
=
\begin{cases}
	\tan(\sqrt{\kappa}t)/\sqrt{\kappa}, & \kappa>0,\\
	t, & \kappa=0,\\
	\tanh(\sqrt{-\kappa}t)/\sqrt{-\kappa}, & \kappa<0.
\end{cases}
$$
Since cosine is decreasing on $[0,\pi]$, $ \langle T(0),\nu_x\rangle = -\cos\alpha \leq -\cos\bar{\alpha}$. 

Let $C$ be the Euclidean arclength parametrization of $\gamma$, and write $\gamma(t)=C(\sigma(t))$, where $\sigma(t)=\ell_{\euc}(\gamma|_{[0,t]})$. It holds that $T(t) = C'( \sigma(t) )$ so Lemma~\ref{lemma-lipschitz-constant-of-qh-geodesics} implies $\operatorname{lip}( T )(t) = \operatorname{lip}( C' )( \sigma(t) )|\sigma'(t)| \leq 1$. Therefore $$|T(t)-T(0)| \leq t.$$ Using the quasihyperbolic estimate and letting $d$ be so small that $\bar{\alpha} \leq \pi/2$, we deduce that
    \begin{align*}
\langle y-x,\nu_x\rangle
&\leq
\int_0^d \delta_{\Omega}(\gamma(t))(-\cos\bar\alpha+t)\,dt \\
&\leq
\delta_{\Omega}(x)
\left[
-\cos\bar\alpha(1-e^{-d})
+
\int_0^d te^t\,dt
\right] \\
&=
\delta_{\Omega}(x)
\left[
-\cos\bar\alpha(1-e^{-d})
+
(d-1)e^d+1
\right].
\end{align*}
An upper bound $\leq 0$ for the last term is equivalent to
\begin{align*}
    f(d) = \frac{ ( (d-1)e^{d} + 1 ) }{ \operatorname{tn}_{\kappa}(d/2)(1-e^{-d}) } \leq \frac{ \cos( \bar{\alpha} ) }{ \operatorname{tn}_{\kappa}(d/2) } = \operatorname{ct}_{\kappa}(r).
\end{align*}
Since $\lim_{d \to 0^{+}} f(d) = 1$, we need only consider the radii $r$ for which $\operatorname{ct}_{\kappa}( r ) > 1$. Equivalently, the radii with $0 < r < r_0 = \operatorname{ct}_\kappa^{-1}(1)$. Having fixed $r< r_0$, the construction implies the existence of $d_0>0$ where $\langle y-x, \nu_x \rangle < 0$ whenever $x,y \in f^{-1}(r)$ and $k_{\Omega}(x,y) < d_0$. This implies strict convexity for the $\mathcal{C}^{1}$-regular domain $B_{k_\Omega}(p,r)$ and its closure $\overline{B}_{k_\Omega}(p,r)$.

Finally, suppose that $\operatorname{ct}_{\kappa}(r)=1$ and $r_j\uparrow r$. Since every $\bar{B}_{k_{\Omega}}(p,r_j)$ is Euclidean convex, so is $B_{k_\Omega}(p,r)$ as a nested union of convex sets. Thus so is its closure $\overline{B}_{k_\Omega}(p,r)$.

When $n = 2$, we use the fact that $\overline{B}_{k_\Omega}(p,\pi/2)$ is $\mathrm{CAT}(0)$. With this observation, the analysis above can easily be adapted to obtain strict convexity for each $r<1$ and convexity for $r = 1$.
\end{proof}

\begin{proof}[Proof of Theorem~\ref{theorem-small-balls-euclidean-convex}]
This is a reformulation of Theorem~\ref{theorem-main-convexity-result}.
\end{proof}

\section{Examples}\label{section-examples}

\subsection{Sharpness of the curvature upper bound}\label{section-sharpness-of-the-curvature-bound}
In this section, we prove Theorem \ref{thm:sharp-cat2}. Let $n \geq 3$ and consider $P = \{e_1, -e_1\}$. Let $\Omega = \R^n \setminus P$. 

The proof has two parts. First, we prove that $Q = \{0\} \times \R^{n-1} \subset \Omega$ is $D_2$-convex, i.e., if $x,y \in Q$ with $0 < k_\Omega(x,y) < D_2$, then the quasihyperbolic geodesic $\gamma \colon [a,b] \to ( \Omega, k_\Omega )$ joining $x$ to $y$ is contained in $Q$. Using this, we prove that the length distance on $Q$ is obtained from a smooth Riemannian metric. Afterwards, an explicit computation demonstrates that the sectional curvature at the origin $0 \in Q$ is exactly $2$. This implies that $( \Omega, k_\Omega )$ cannot be locally $\mathrm{CAT}(\kappa)$ for any $\kappa < 2$ by Theorem~\ref{thm:riemannian-local-CAT}.

We start with the claim that $Q \subset ( \Omega, k_\Omega )$ is $D_2$-convex. We first note that $T \colon ( \Omega, k_\Omega ) \to ( \Omega, k_\Omega )$, where $T(x,y) = (-x,y)$, is an isometry. Hence, if there were a non-trivial component $I \subset \gamma^{-1}( \{ x < 0 \} )$, the curve obtained by concatenating $T \circ \gamma|_{I}$ to $\gamma|_{ [a,b] \setminus I }$ would yield another quasihyperbolic geodesic joining $a$ to $b$. This contradicts uniqueness. Hence no such component $I$ exists. By symmetry, it follows that $\gamma$ lies in $Q$.

Since $Q \subset ( \Omega, k_\Omega )$ is $D_2$-convex, it follows that $Q$ is $\mathrm{CAT}(2)$ when equipped with the restriction of $k_\Omega$. The distance on $Q$ is obtained from a smooth Riemannian metric. In fact, consider $L \colon \R^{n-1} \to Q$, where $L(x) = (0,x)$. When $\R^{n-1}$ is equipped with the metric tensor
\begin{equation*}
    g = \frac{ 1 }{ 1 + r^2 }\left( dr^2 + r^2 g_{\mathbb{S}^{n-2}}^2 \right) = \frac{ r^2 }{ 1 + r^2 }\left( \frac{1}{r^2} dr^2 + g_{\mathbb{S}^{n-2}}^2 \right),
\end{equation*}
defined in spherical coordinates, the map $L \colon ( \R^{n-1}, g ) \to ( Q, k_\Omega )$ becomes a path isometry. Next, consider the coordinates $u = \sinh^{-1}(r)$ so $dr = \cosh(u) \,du$ and $\cosh^2(u) = 1+r^2$. Hence
\begin{align*}
    g_{\text{affine}}
    \coloneqq
    \frac{ 1 }{ \cosh^2(u) }
    \left(
        \cosh^2(u) du^2 + \sinh^2(u) g_{\mathbb{S}^{n-2}}^2 
    \right)
    =
    du^2 + \tanh(u)^2 g_{\mathbb{S}^{n-2}}^2.
\end{align*}
Here $\tanh(u) = u + O(u^3)$ so we may extend $g_{\text{affine}}$ as the Euclidean inner product to $u=0$. We abuse notation and denote the induced metric on $\mathbb{R}^{n-1}$ by $g_{\text{affine}}$. Standard sectional curvature formulas for radial metrics show that for a plane defined by a radial and a tangential direction, the sectional curvature of $g_{\text{affine}}$ at $(u,e_1)$ is
\begin{align*}
    K_{g_{\text{affine}}}((u,e_1),\Pi_{\text{radial}}) = - \frac{ \tanh''(u) }{ \tanh(u) } = \frac{ 2 }{ \cosh^2(u) } = \frac{ 2 }{ 1 + r^2 },
\end{align*}
see e.g. \cite[page 27, Remark 7.9 (3)]{Bishop-ONeill-1969-manifolds-of-negative-curvature}. As $r \rightarrow 0^{+}$, this converges to $2$. Therefore $( Q, k_\Omega )$ is locally $\mathrm{CAT}(\kappa)$ only for $\kappa \geq 2$ by Theorem~\ref{thm:riemannian-local-CAT}, and hence the $\mathrm{CAT}(2)$ conclusion cannot be improved when $n \geq 3$.

\subsection{A non-convex CAT(0) example}\label{example:nonconvex-cat0-example}
In this section, we construct an example of a $\CAT(0)$ quasihyperbolic domain for which $\Omega$ is non-convex, concave, and smooth. We claim that the domain $\Omega = \R^{n-2} \times D$ for
$$
D=\{(s,r)\in\R^2\colon r<s^2\}.
$$
suffices for $n \geq 3$.

We express the points of $\Omega$ as $(y,s,r)$, where
$y\in\R^{n-2}$ and $r<s^2$. The map
$(y,s,r)\mapsto(y,s,r-s^2)$ is a diffeomorphism from $\Omega$ onto the lower half-space $\R^{n-1}\times(-\infty,0)$. Hence $\Omega$ is contractible and, in particular, simply connected. Thus, by Cartan--Hadamard theorem, it suffices to prove that $\Omega$ is locally $\mathrm{CAT}(0)$. By Theorem~\ref{thm:riemannian-local-CAT}, it suffices to prove that $\delta_{\Omega}$ is smooth and derive a sectional curvature upper bound of zero for $\delta_{\Omega}^{-2} g_{\euc}$.

We describe the domain and $\delta_\Omega$. First, the complement $C = \R^n \setminus \Omega$ is convex by the convexity of $s \mapsto s^2$. Second, the outer normal of $\partial C = \R^{n-2} \times \partial U$ is
$$
    \nu(p(y,s))=\nu(s)
    =
    \frac{(0,\ldots,0,2s,-1)}{\sqrt{1+4s^2}}.
$$
We consider the nearest point projection $\pi_{\Omega} \colon \Omega \to \partial \Omega$ and $\Phi \colon \Omega \to \partial \Omega \times (0,\infty)$, where $\Phi(x) = ( \pi_{\Omega}(x), \delta_\Omega(x) )$ for $x \in \Omega$. The normal map
$$
    \Psi\colon\partial\Omega\times(0,\infty)\longrightarrow\R^n,
    \quad
    \Psi(p,t)=p+t\nu(p)
$$
is the inverse of $\Phi$. Indeed, for $p=p(y,s)\in\partial \Omega$, $t>0$, and $q\in C$, the supporting
hyperplane property gives $\langle \nu(s), q-p\rangle\leq0$.
Consequently,
$$|\Psi(p,t)-q|^2=t^2+|q-p|^2-2t\langle\nu(s),q-p\rangle\ge t^2$$
with equality if and only if $q=p$. Hence $\Psi(p,t)\in\Omega$. Moreover, $p$ is the unique nearest point in
$\partial \Omega$ to $\Psi(p,t)$, and $\delta_\Omega(\Psi(p,t))=t$. In fact, $\Phi \circ \Psi = \mathrm{id}_{ \partial \Omega \times (0,\infty) }$ and $\Psi \circ \Phi = \mathrm{id}_{ \Omega }$.

An orthonormal basis of the tangent space $T_{(y,s)}\partial \Omega$ in $\partial \Omega = \mathbb{R}^{n-2} \times \partial \Omega$ is given by an orthonormal basis of $\R^{n-2} \times \{0\}$ and
$$
    E(s)
    =
    \frac{(0,\ldots,0,1,2s)}{\sqrt{1+4s^2}}.
$$
We also denote
$$
    \kappa(s) = \frac{2}{(1+4s^2)^{3/2}},
$$
where $\kappa(s)$ is the curvature of $t \mapsto (t,t^2)$ at $s$.

For every vector $v$ tangent to the $\R^{n-2}$-factor, the differential of $\Psi$ satisfies
\begin{align}
	D\Psi_{(p(y,s),t)}(v,0)
	&=v,
	\label{eq:differential-of-psi-1}\\
	D\Psi_{(p(y,s),t)}(E(s),0)
	&=(1+t\kappa(s))E(s),
	\label{eq:differential-of-psi-2}\\
	D\Psi_{(p(y,s),t)}(0,1)
	&=\nu(s).
	\label{eq:differential-of-psi-3}
\end{align}
It follows that the differential of $\Psi$ has full rank everywhere and thus $\Psi$ and its inverse are diffeomorphisms. In particular, $\delta_\Omega$ is smooth. Differentiating the identity $\delta_\Omega(\Psi(p(y,s),t))=t$ and using \eqref{eq:differential-of-psi-1}, \eqref{eq:differential-of-psi-2}, and \eqref{eq:differential-of-psi-3} for $D\Psi$ gives
\begin{equation}\label{eq:nabla-delta-omega}
    \nabla\delta_\Omega(\Psi(p(y,s),t))=\nu(s).
\end{equation}

Fix $x=\Psi(p(y,s),t)$. To bound the eigenvalues of the Hessian $(\Hess\delta_\Omega)_x$, we need to differentiate the identity \eqref{eq:nabla-delta-omega}. For this purpose, we need to differentiate $\nu$ as well. Since $\nu$ is independent of $y$,
$$
(D\nu)_{p(y,s)}(v)=0
$$
for every vector $v$ tangent to the $\R^{n-2}$-factor. Moreover,
direct differentiation gives
$$
(D\nu)_{p(y,s)}(E(s))
=
\kappa(s)E(s).
$$
Now, using the identities \eqref{eq:differential-of-psi-1}-\eqref{eq:nabla-delta-omega} shows that the Euclidean
symmetric endomorphism associated with
$(\Hess\delta_\Omega)_x$ has eigenvalue $\frac{\kappa(s)}{1+t\kappa(s)}$ in the direction $E(s)$ and vanishes on $E(s)^\perp$. We obtain from Lemma~\ref{lem:smooth-distance-curvature} that
\begin{align*}
    K_g((p(y,s),t),\Pi)
    &=
    t\operatorname{tr}_{\Pi}(\Hess\delta_\Omega)_{(p(y,s),t)} - 1
    \leq
    \frac{ t \kappa(s) }{ 1 + t\kappa(s) } - 1
    <
    0
\end{align*}
for every $2$-plane. This  implies that $(\Omega,k_\Omega)$ is locally $\CAT(0)$. The proof is complete.

\section*{Acknowledgements}
We thank Nicola Cavallucci, Pekka Koskela, Alexander Lytchak and Kai Rajala for feedback on an earlier version of this manuscript. We also thank Stefan Wenger and Pekka Pankka for discussions on the topic.

\providecommand{\bysame}{\leavevmode\hbox to3em{\hrulefill}\thinspace}
\providecommand{\MR}{\relax\ifhmode\unskip\space\fi MR }
\providecommand{\MRhref}[2]{%
  \href{http://www.ams.org/mathscinet-getitem?mr=#1}{#2}
}
\providecommand{\href}[2]{#2}

\end{document}